\newcommand{\bbold}{\mathbb}
\newcommand{\cal}{\mathcal}
\def \cM{\mathbf{M}}
\def\R { {\bbold R} }
\def\Z { {\bbold Z} }
\def\cC{\cal{C}}
\def\N { {\bbold N} }
\def\T { {\bbold T} }
\def \Def{\operatorname{Def}}
\def \iso{\operatorname{iso}}
\def \f{\operatorname{f}}
\def \trop{\operatorname{trop}}
\def \Th{\operatorname{Th}}
\renewcommand\epsilon{\varepsilon}
\def \<{\langle}
\def \>{\rangle}
\def \((  {(\!(}
\def \)) {)\!)}
\def \k {{{\boldsymbol{k}}}}
\DeclareMathSymbol{\precequ}{\mathrel}{symbols}{"16}
\DeclareMathSymbol{\succequ}{\mathrel}{symbols}{"17}
\newtheorem{theorem}{Theorem}[section]
\newtheorem{lemma}[theorem]{Lemma}
\newtheorem{prop}[theorem]{Proposition}
\newtheorem{cor}[theorem]{Corollary}
\theoremstyle{definition}
\theoremstyle{remark}
\let\oldi\i
\let\oldj\j
\renewcommand\i{\relax\ifmmode{\boldsymbol{i}}\else\oldi\fi}
\renewcommand\j{\relax\ifmmode{\boldsymbol{j}}\else\oldj\fi}
\renewcommand\leq{\leqslant}
\renewcommand\geq{\geqslant}
\renewcommand\preceq{\preccurlyeq}
\renewcommand\le{\leq}
\renewcommand\ge{\geq}
\DeclareMathAlphabet{\mathbf}{OML}{cmm}{b}{it}
\DeclareFontFamily{U}{fsy}{}
\DeclareFontShape{U}{fsy}{m}{n}{<->s*[.9]psyr}{}
\DeclareSymbolFont{der@m}{U}{fsy}{m}{n}
\DeclareMathSymbol{\der}{\mathord}{der@m}{182}
\DeclareSymbolFont{der@m}{U}{fsy}{m}{n}
\DeclareMathSymbol{\derdelta}{\mathord}{der@m}{100}
\DeclareSymbolFont{imag@m}{OT1}{cmr}{m}{ui}
\DeclareMathSymbol{\imag}{\mathord}{imag@m}{105}
\DeclareFontFamily{OMS}{smallo}{}
\DeclareFontShape{OMS}{smallo}{m}{n}{<->s*[.65]cmsy10}{}
\DeclareSymbolFont{smallo@m}{OMS}{smallo}{m}{n}
\DeclareMathSymbol{\smallo}{\mathord}{smallo@m}{79}
\DeclareFontFamily{OMS}{largerdot}{}
\DeclareFontShape{OMS}{largerdot}{m}{n}{<->s*[.8]cmsy10}{}
\DeclareSymbolFont{largerdot@m}{OMS}{largerdot}{m}{n}
\DeclareMathSymbol{\largerdot}{\mathord}{largerdot@m}{15}
\DeclareMathSymbol{\llambda}{\mathord}{der@m}{108}
\DeclareMathSymbol{\rrho}{\mathord}{der@m}{114}
\def \upo{\upomega}
\newcommand{\equationqed}[1]{\[\pushQED{\qed}#1 \qedhere\popQED\]\let\qed\relax}
\newcommand{\alignqed}[1]{\begin{align*}\pushQED{\qed} #1 \qedhere\popQED\end{align*}\let\qed\relax}
\newcommand{\dminus}{\mathbin{\text{\@dminus}}}
\newcommand{\@dminus}{%
  \ooalign{\hidewidth\raise1ex\hbox{\bf.}\hidewidth\cr$\m@th-$\cr}%
}
\begin{document}

\title{Extending Fubini measures}



\author[van den Dries]{Lou van den Dries}
\address{Department of Mathematics\\
University of Illinois at Urbana-Cham\-paign\\
Urbana, IL 61801\\
U.S.A.}
\email{vddries@illinois.edu}


\begin{abstract} Let $C\subseteq M$ be stably embedded in a structure $\cM=(M;\dots)$. We consider {\em Fubini measures} on the subcategory $\Def(C)$ of the category $\Def(\cM)$ of definable sets in $\cM$, with ``Fubini" signaling good behaviour in definable families. We show that such a Fubini measure extends uniquely to the larger subcategory of $\Def(\cM)$ whose objects are the sets that are ``fiberable over $C$". In cases of interest ``fiberable over $C$"  coincides with ``co-analyzable relative to $C$." This applies in particular  to the differential field $\T$ of transseries with $C=\R$, and to differentially closed fields with constant field $C$. \end{abstract}

\date{December 2025}

\maketitle

\section{Introduction}\label{int}

\noindent
Semialgebraic sets $Y\subseteq \R^n$ have a dimension $\dim(Y)\in \N\cup\{-\infty\}$ and an Euler characteristic $E(Y)\in \Z$ with good properties; see \cite[Ch. 4]{D}. In \cite{ADH} we asked for a natural extension of this dimension and Euler characteristic to the definable sets $X\subseteq \T^m$
that are ``fiberable by $\R$" as defined there; here $\T$ is the real closed differential field of transseries, which has its subfield $\R$ as constant field. 

There are many similar situations, and so it makes sense to do the above as a special case of a general extension result for what we call {\em Fubini measures}, with ``Fubini'' signaling good behavior in families. For example, at the end of Section~\ref{special} we shall also apply this to differentially closed fields (models of $\operatorname{DCF}_0$).   

\medskip\noindent
Let $\cM=(M;\cdots)$ be a one-sorted $\cal{L}$-structure;
``definable" will mean ``definable in $\cM$ with parameters" unless we indicate otherwise.  For definable $X\subseteq M^m$ and an elementary extension $\cM^*=(M^*;\cdots)$ of $\cM$ we let $X^*\subseteq (M^*)^m$ be defined in $\cM^*$ by any $\cal{L}_M$-formula that defines $X$ in $\cM$. 
The (small) category $\Def(\cM)$ has as its objects the definable sets $X\subseteq M^m$; the number $m\in \N$ is part of  specifying $X$ as an object of $\Def(\cM)$.  For objects $X,Y$ of $\Def(\cM)$, the morphisms $X\to Y$ are the definable maps $X\to Y$,
and composition of morphisms is composition of maps, so an isomorphism $X\to Y$ is the same as a definable bijection $X\to Y$. We shall abuse language by
indicating the class of objects of a category $\cal{C}$ also by $\cal{C}$, so $X\in \cal{C}$ means that $X$ is an object of $\cal{C}$. 

\medskip\noindent
Let $\cC$ be a full subcategory of $\Def(\cM)$ such that for all $X,Y\in \cC$: \begin{itemize}
\item[(a)] if $X, Y\subseteq M^m$, then $X\cup Y\subseteq M^m$ is an object of $\cC$; 
\item[(b)] every definable subset of $X$ is an object of $\cC$;
\item[(c)]  $X\times Y\in \cC$ (where $X\times Y\subseteq  M^{m+n}$ for $X\subseteq M^m, Y\subseteq M^n$).
\end{itemize} 
Let $A$ be a commutative semiring, that is, a set with binary operations of addition and multiplication and distinguished elements
$0$ and $1$ such that $A$ is a commutative monoid with respect to both addition and multiplication, with $0$ and $1$ as neutral elements for addition and multiplication, respectively, such that multiplication is distributive with respect to addition, and such that also $0\cdot a=0$ for all $a\in A$.

Let  $\mu$ an $A$-valued Fubini measure on $\cC$, that is, $\mu: \cC\to A$, and for all $X,Y\in  \cC$:\begin{enumerate} 
\item[(d)] $\mu(X)=0$ whenever $X$ is empty; $\mu(X)=1$ whenever $X$ is a singleton; 
\item[(e)] $\mu(X\cup Y)=\mu(X)+\mu(Y)$ whenever $X,Y\subseteq M^m$ are disjoint;
\item[(f)] every $\cC$-morphism $f: X \to Y$ satisfies ``Fubini": \begin{enumerate}
\item[(F1)] the subset $\{\mu\big(f^{-1}(y)\big):\ y\in Y\}$ of $A$ is finite;
\item[(F2)] for all $a\in A$, the set 
$Y_a := \{y\in Y:\ \mu\big(f^{-1}(y)\big)=a\}$ is definable;
\item[(F3)] if $\mu\big(f^{-1}(y)\big)$ takes the constant value $a$ on $Y$, then
$\mu(X)= a\mu(Y)$.
\end{enumerate} 
\end{enumerate} 
Thus for $X,Y\in  \cC$:  $\mu(X\times Y)=\mu(X)\cdot \mu(Y)$; if $X$ and $Y$ are $\cC$-isomorphic, then $\mu(X)=\mu(Y)$ (a ``motivic'' property);  for $X,Y\subseteq M^m$ we can represent $X\cup Y$ as the disjoint union of $X\setminus Y, X\cap Y$, and $Y\setminus X$ to obtain
$$\mu(X\cup Y)+\mu(X\cap Y)\ =\ \mu(X)+\mu(Y).$$ 
If $f: X\to Y$ is a $\cC$-morphism, then $\mu(X)=\sum_a a\mu(Y_a)$, using  notation of (F2). 

\medskip\noindent
{\bf Example.} Let $\cM=\T$ as in the beginning of the introduction. Then the subsets of $\R^n$ definable in $\cM$ are  semialgebraic in the sense of $\R$, so the full subcategory $\cC$ of $\Def(\cM)$
whose objects are these semialgebraic sets satisfies conditions (a), (b), (c). The above Euler characteristic $Y\mapsto E(Y)$ is then a Fubini measure on $\cC$ with values in the ring $\Z$ of integers.  Also $Y\mapsto \dim(Y)$ is a Fubini measure on $\cC$, taking values in the tropical semiring $\N_{\trop}$, which has underlying set
$\N\cup\{-\infty\}$ and addition and multiplication given respectively by
$(p,q)\mapsto \max(p,q)$ and $(p,q)\mapsto p+q$; this semiring has $-\infty$ as its zero element for addition and $0$ as its multiplicative identity. 

\bigskip\noindent
To state the main result of this paper, we consider a $0$-definable set $C\subseteq M$ with more than one element, and let $\Def(C)$ be the full subcategory of $\Def(\cM)$ whose objects are the definable subsets of the powers $C^n$.  Below we define the notion, for definable $X\subseteq M^m$, of {\em $X$ being fiberable over $C$}. These $X$ are the objects of a full subcategory
$\Def(C)^{\f}\supseteq  \Def(C)$ of $\Def(\cM)$. The categories $\Def(C)$ and $\Def(C)^{\f}$ satisfy conditions (a), (b), (c)  imposed earlier on $\cC$. 

\begin{theorem}\label{thm} If $C$ is stably embedded in $\cM$ and $\mu$ an $A$-valued Fubini measure on $\Def(C)$,
then $\mu$ extends uniquely to an $A$-valued Fubini measure on $\Def(C)^{\f}$. 
\end{theorem}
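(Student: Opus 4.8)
The plan is to argue by induction on the \emph{fiber depth} of a set $X$ fiberable over $C$, i.e., on the number of fibering steps needed to build $X$ from objects of $\Def(C)$, where a fibering step presents $X$ up to definable bijection as a definable $f\colon X\to Y$ with $Y$ of smaller depth and each fiber $f^{-1}(y)$ definably isomorphic to an object $D_y$ of $\Def(C)$. The engine throughout will be the family form of stable embeddedness: any definable (in $\cM$, with parameters from $M$) family $\{Z_b\}_{b\in B}$ whose members $Z_b$ are objects of $\Def(C)$ is, after composing the index with a definable map $\gamma\colon B\to C^k$, a subfamily of a single $C$-definable family $\{E_c\}_{c\in C^k}$ with $Z_b=E_{\gamma(b)}$. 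This is exactly what lets me transport the Fubini data of $\mu$ on $\Def(C)$ across bases that are not themselves objects of $\Def(C)$.

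I would dispose of uniqueness first. Fix a fibering $f\colon X\to Y$ as above. Any Fubini extension $\tilde\mu$ is invariant under definable bijections—apply (F1)--(F3) to a bijection, whose fibers are singletons, to get $\tilde\mu(X')=1\cdot\tilde\mu(Y')=\tilde\mu(Y')$—so $\tilde\mu(f^{-1}(y))=\mu(D_y)$ is already forced, and by (F2) the level set $Y_a=\{y\in Y:\tilde\mu(f^{-1}(y))=a\}$ is a definable subset of $Y$, hence fiberable of smaller depth, so $\tilde\mu(Y_a)$ is forced by the induction hypothesis. Then (F1)--(F3) give $\tilde\mu(X)=\sum_a a\,\tilde\mu(Y_a)$, so $\tilde\mu(X)$ is determined and uniqueness follows.

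Existence is the substantive part. I would set $\mu(X):=\sum_a a\,\mu(Y_a)$ using a chosen fibering, reading $\mu$ on the fibers off their $\Def(C)$-isomorphs and taking $\mu$ on $Y$ from the inductive step. To make this legitimate I first check (F1), (F2) for the chosen fibering: by the stable-embeddedness fact the family $\{D_y\}_{y\in Y}$ is $\gamma$-pulled back from a $C$-definable family $\{E_c\}_{c\in C^k}$ whose index set lies in $\Def(C)$; applying the given axioms (F1), (F2) for $\mu$ to that $C$-family shows $\{c:\mu(E_c)=a\}\in\Def(C)$, and pulling back along $\gamma$ shows each $Y_a$ is definable with $a$ ranging over a finite set, so the finite sum makes sense.

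The hard part will be well-definedness—independence of the chosen fibering—and I expect to prove it together with the Fubini axioms for the extension \emph{simultaneously} by induction on depth. Given two fiberings $f\colon X\to Y$ and $f'\colon X\to Y'$, I would pass to the common refinement $(f,f')\colon X\to Y\times Y'$ and compare both formulas for $\mu(X)$ against the value computed from the refinement, using additivity (e) over the finite $a$-decompositions and the multiplicativity encoded in (F3). Since this comparison itself invokes the Fubini property of the not-yet-finished extension on the lower-depth pieces $Y_a$, $Y'_a$, the argument only closes as part of the simultaneous induction; this bootstrapping, rather than any single computation, is the main obstacle. With well-definedness in hand, axioms (d), (e) are routine, and (f) for a general $\Def(C)^{\f}$-morphism $h\colon X\to W$ follows by refining $h$ against the chosen fiberings of $X$ and $W$: (F1), (F2) for $h$ come from the same stable-embeddedness transfer of definability of level sets, and (F3) from composing the fiber-wise identities.
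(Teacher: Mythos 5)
Your load-bearing step fails where you apply stable embeddedness to ``the family $\{D_y\}_{y\in Y}$''. The fibers $f^{-1}(y)$ are subsets of $M^m$, not of any power of $C$; what you are given is, for each \emph{individual} $y$, the existence of some definable bijection of $f^{-1}(y)$ onto some $D_y\subseteq C^{n(y)}$, where the bijection, its parameters, and even the dimension $n(y)$ vary with $y$ in no definable way. So there is no definable family of objects of $\Def(C)$ to which your transfer principle can be applied: the assignment $y\mapsto D_y$ is not part of the data, and making the witnessing maps uniform in $y$ is precisely the content of the theorem, not something stable embeddedness hands you. The paper closes this gap by (i) first reducing to $\omega$-saturated $\cM$, (ii) a compactness argument covering all fibers by \emph{finitely many} uniformly definable families of witnessing maps (Lemma~\ref{lind}, and later the explicit ``$r$-step fibering'' apparatus with its uniform definability), and (iii) carrying a \emph{parametric} inductive hypothesis (IA2): for every definable $\cal{S}\subseteq M^k\times M^m$ all of whose sections have depth $\le r$, the set of values $\mu\big(\cal{S}(p)\big)$ is finite and each level set $\{p:\mu\big(\cal{S}(p)\big)=a\}$ is definable, with $p$ ranging over $M^k$ rather than over a $C$-set. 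Your induction hypothesis (the measure exists at lower depth) is too weak to run the inductive step; it is (IA2) that makes (F1), (F2) available at the next depth. A smaller inaccuracy in the same step: your formulation of stable embeddedness supplies a definable choice map $\gamma\colon B\to C^k$ of codes, but the hypothesis only gives, for each parameter, the \emph{existence} of a code, and definable choice can fail; this part is repairable by existentially quantifying over codes instead of selecting them, which is how the paper's Lemma~\ref{l1} argues.

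Two further gaps. First, the theorem is about arbitrary $\cM$, but you never assume $\omega$-saturation nor reduce to it; without saturation the compactness arguments above are unavailable, and the paper must both recast ``$r$-step fiberable'' in terms of formula-level data invariant under elementary extension and then construct the extension by passing to a saturated $\cM^*$, building $\mu^*$ on $\Def(C^*)$ from the level sets of $\mu$, and pulling back $\mu(X):=\mu^*(X^*)$. (Relatedly, your fibering step demands fibers isomorphic to $\Def(C)$-objects, whereas fiberability only demands fibers of smaller depth; reconciling the two hierarchies is itself a compactness/flattening argument.) Second, you correctly isolate the bootstrap --- well-definedness of $\sum_a a\mu(Y_a)$ and the Fubini property for all morphisms must be established simultaneously --- and your common-refinement map $(f,f')\colon X\to Y\times Y'$ is indeed the paper's device (Corollary~\ref{c0++}); but you leave the bootstrap as ``the main obstacle'' rather than resolving it. What makes it close in the paper is a lemma proved in advance from the semi-Fubini axioms alone: a composition of Fubini maps is a Fubini map (Lemma~\ref{lf}); with it, every $\cC_{r+1}$-witness is a Fubini map by construction, every morphism into a depth-$r$ base is one by factoring through $(f,g)$ (Lemma~\ref{ll0}), and the general case follows (Proposition~\ref{prop+}). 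As written, your proposal gives the correct architecture but omits the two components that carry the weight: uniformization of the fiber data and the Fubini-map composition lemma.
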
 

\noindent
Suppose $\cM$ is $\omega$-saturated, and let $X\subseteq M^m$ be definable. By recursion on $r\in \N$ we define ``$X$ is $r$-step fiberable over $C$":  this means the existence of a definable $f: X \to M^n$ such that 
if $r=0$, then $f$ is injective and $f(X)\subseteq C^n$, and if $r\ge 1$, then $f(X)$ and every fiber $f^{-1}(y)$, $y\in M^n$,
is $(r-1)$-step fiberable over $C$.

Now drop the assumption that $\cM$ is $\omega$-saturated, and let $X\subseteq M^m$ be definable.
Then the definition of ``$X$ is $r$-step fiberable
over $C$'' (with $\cM$ as ambient structure) is more complicated, see end of Section~\ref{proof}, but will be such that it is equivalent to ``$X^*$ is $r$-step fiberable  over $C^*$" with $\cM^*$ as ambient structure, where $\cM^*=(M^*,\dots)$ 
is any $\omega$-saturated elementary extension of $\cM$. Define $X$ to be {\em fiberable over $C$\/} if  it is $r$-step fiberable over $C$ for some $r$. 

Using this equivalence the proof of Theorem~\ref{thm} reduces to the case where $\cM$ is $\omega$-saturated, which is very convenient. The idea is to show by induction on $r$ that $\mu$ extends to an $A$-valued Fubini measure on the full subcategory of $\Def(\cM)$ whose objects are the definable $X\subseteq M^m$ that are $r$-step fiberable over $C$. In the step from $r$ to $r+1$ we consider an $(r+1)$-step fiberable $X\subseteq M^m$ over $C$. This is witnessed by a definable $f: X\to M^n$ such that $f(X)$ and all $f^{-1}(y)$ are $r$-step fiberable over $C$. A carefully chosen inductive assumption will tell us that $\mu\big(f(X)\big)$ is defined and that $\mu\big(f^{-1}(y)\big)$ is defined and takes only finitely many values for $y\in f(X)$. Then we define 
$\mu_f(X)$ as a suitably weighted sum of these values and show that $\mu_f(X)$ does not depend on $f$, but only on $X$, and that the resulting $\mu(X)=\mu_f(X)$ defined for such $X$ preserves the Fubini property. We also have to show that the inductive assumption is preserved in this step from $r$ to $r+1$. All this is carried out in detail in Section~\ref{proof}.  

Section~\ref{prelim} contains preliminary results. In section~\ref{co} we relate fiberability over $C$ to the more intrinsic notion of {\em co-analyzability relative to $C$\/}  from \cite{HHM}, and show their equivalence under various model theoretic conditions. These conditions are satisfied by $\T$ and by differentially closed fields, with $C$ their constant field. This is discussed in Section~\ref{special}, where it is used to answer questions left open in \cite{ADH}. In the short last section~\ref{ms} we state for the record a many-sorted version of Theorem~\ref{thm}. 

\medskip\noindent
The  measures from  \cite{MS, AMSW} also take values in semirings and satisfy {\em Fubini}, but seem to be of a rather different nature. Nevertheless, some variant of Theorem~\ref{thm} might hold in that setting. One could also take a look at motivic measures as in ~\cite{HK}, and consider their ``Fubini" nature, or lack of it. 

\medskip\noindent
I thank Ben Castle, Artem Chernikov, Rahim Moosa, Anand Pillay, and Charles Steinhorn for useful feedback after a talk I gave on an earlier version of this paper. 

\subsection*{Notations and Terminology} We let $d,e, k, l, m,n,r$, sometimes subscripted, range over $\N=\{0,1,2,\dots\}$.  For sets $P, Q$, and $\cal{S}\subseteq P\times Q$ and $p\in P$ we set $$\cal{S}(p)\ :=\ \{q\in Q:\ (p,q)\in \cal{S}\}. $$  (Think of $\cal{S}$ as describing the family $\big(\cal{S}(p)\big)_{p\in P}$ of subsets of $Q$.) Throughout, $\cM=(M;\dots)$ is a one-sorted $\cal{L}$-structure. When $C$ gets mentioned, we assume $C\subseteq M$ is $0$-definable and $|C|\ge 2$. Recall, for example from \cite{HHM}, that $C$ is said to be {\em stably embedded in $\cM$\/} if for every $0$-definable $\cal{S}\subseteq M^{m}\times M^n$ there is a $0$-definable $\cal{T}\subseteq M^{\mu}\times M^n$, $\mu\in \N$, such that for all $p\in M^m$ there exists $c\in C^{\mu}$ for which $\cal{S}(p)\cap C^n = \cal{T}(c)\cap C^n$. 
  For $\omega$-saturated $\cM$, this is equivalent to all definable $X\subseteq C^n$ being $C$-definable. 
 
Suppose $X\subseteq M^m$ and $Y\subseteq M^n$ are definable. Then a {\em definable family of maps from subsets of $X$ into $Y$\/} is a definable set
$\cal{F}\subseteq P\times X \times Y$, with definable $P\subseteq M^k$ such that for each ``parameter" $p\in P$ the set
$\cal{F}(p)\subseteq X\times Y$ is the graph of a map $f_p$ with domain a subset of $X$ and codomain $Y$.  We think of such $\cal{F}$ as the family $(f_p)_{p\in P}$. Abusing language we write $f\in \cal{F}$ to indicate that $f=f_p$ for some $p\in P$, and to state that the set of $f\in \cal{F}$ with a certain property is definable means that the set of $p\in P$ such that $f_p$ has that property is definable. Given $\cal{S}\subseteq P\times Q$, $p\in P$, and a map $f: \cal{S} \to Z$, we let $f(p,-)$ denote the map $q\mapsto f(p,q): \cal{S}(p)\to Z$. 

Our definition of ``$r$-step fiberable over $C$'' differs from that of ``fiberable by $C$ in $r$ steps'' 
in \cite{ADH} for a Liouville closed $H$-field $K$ and its constant field $C$. But, in that case,  ``fiberable over $C$'' as defined here is  equivalent to ``fiberable by $C$ in $r$ steps for some $r$" as defined in \cite{ADH}.  

\section{Preliminaries}\label{prelim}

\noindent
It will be convenient to consider measures that do not necessarily satisfy (F3). We call  them {\em semi-Fubini measures\/} and they are defined in the first subsection below. In the second subsection we consider {\em Fubini maps}: morphisms that satisfy a version of (F3), but with respect to a semi-Fubini measure. The key fact about them is that a composition of Fubini maps is a Fubini map: Lemma~\ref{lf}.

\medskip\noindent
{\em In this section $\cC$ is a full subcategory of $\Def(\cM)$ satisfying {\rm (a), (b), (c)}, such that some $X\in \cC$ has more than one element}. Note that then for every $n$ some $X\in \cC$ has more than $n$ elements. Let  $\cC^{\iso}$ be the full subcategory of $\Def(\cM)$ whose objects are the definable sets $X\subseteq M^m$ that are isomorphic, in $\Def(\cM)$, to an object of $\cC$.  Then $\cC^{\iso}$ also satisfies (a), (b), (c): this is clear for (b) and (c), and to check (a), let $f: X'\to X$ and $g: Y'\to Y$ be isomorphisms in $\Def(\cM)$ with $X,Y\in  \cC$, and $X', Y'\subseteq M^m$; we need to show that then there is a definable bijection $X'\cup Y'\to Z$ with $Z\in \cC$. First arrange $X'$ and $Y'$ to be disjoint. The case that $X'$ or $Y'$ is empty is trivial, so assume $X'$ and $Y'$ are nonempty. Take $V\in \cC$ with distinct elements $v_1, v_2$, and pick $x_0\in X$ and $y_0\in Y$.
Then $ \{x_0\}\times Y\times \{v_1\}$ and $X\times \{y_0\}\times \{v_2\}$ are disjoint subsets of $X\times Y\times V\in \cC$, their
union $Z$ is a definable subset of $X\times Y\times V$, and thus $Z\in  \cC$. Then $h: X'\cup Y' \to Z$ with $h(x)=\big(f(x),y_0, v_2\big)$ for $x\in X'$ and $h(y)=\big(x_0,g(y), v_1\big)$ for $y\in Y'$ is a definable bijection as required. Note that any finite subset of any $M^n$ is an object of $\cC^{\iso}$, and that $\big(\cC^{\iso})^{\iso}=\cC^{\iso}$.

{\em Throughout, $A$ is a commutative semiring, and we let $a,b,c$ range over $A$}. We consider $\N$ as a semiring in the obvious way, and observe that there is a unique semiring morphism $\N\to A$; we set $da:=d_Aa$ where $d_A$ is the image of $d$ in $A$ under this semiring morphism.

\subsection*{Semi-Fubini measures}  Let $\mu$ be an {\em $A$-valued semi-Fubini measure on $\cC$}, that is, $\mu:  \cC\to A$ satisfies conditions (d), (e) in the definition of ``Fubini measure'', every $\cC$-morphism $f: X\to Y$ satisfies (F1) and (F2), and 
$\mu(X)=\mu(Y)$ whenever $X$ and $Y$ are $\cC$-isomorphic. Consequence: for $X,Y\in \cC$ and definable $\cal{S}\subseteq X\times Y$ the set $\{\mu\big(\cal{S}(x)\big):\ x\in X\}$ is finite and for all $a$ the set $\{x\in X:\ \mu\big(\cal{S}(x)\big)=a\}$ is definable; this follows by applying (F1) and (F2) to the projection map $\cal{S} \to X$.

We first  extend $\mu$ to $ \cC^{\iso}$ by $\mu(X'):=\mu(X)$ whenever $X'\in \Def(\cM)$ is isomorphic to $X\in \cC$. It is easy to check that this extended $\mu$ is then an $A$-valued semi-Fubini measure on $\cC^{\iso}$: for clause (e) we deal with $\mu(X'\cup Y')$ as in the earlier $X'\cup Y'$ argument for $\cC^{\iso}$. 
If $\mu:\cC\to A$ is a Fubini measure, then so is its extension $\mu:  \cC^{\iso}\to A$.

\medskip\noindent
Let $f: X\to Y$ be a $\Def(\cM)$-morphism. Call $f$ {\em uniform over $(\cC,\mu)$\/} if \begin{enumerate}
\item $Y\in  \cC$ and $f^{-1}(y)\in \cC^{\iso}$ for all $y\in Y$;
\item for all definable $X'\subseteq X$  the subset $\{\mu\big(f^{-1}(y)\cap X'\big):\ y\in Y\}$ of $A$ is finite;
\item for all definable $X'\subseteq X$ and all $a$ the subset
$$Y_{f, X',a}\  :=\  \{y\in Y:\ \mu\big(f^{-1}(y)\cap X'\big)=a\}$$
 of $Y$  is definable.
\end{enumerate} 
In connection with (3) we write $Y_{f,a}$ instead of $Y_{f,X,a}$, and even $Y_a$ instead of $Y_{f,a}$  when $f$ is clear from the context. Note that if $f$ is uniform over $(\cC,\mu)$, then so is any restriction
$f|_{X'}: X'\to Y'$ with definable $X'\subseteq X$ and $Y'\subseteq Y$ such that $f(X')\subseteq Y'$, and that $Y_a$ is empty for all but finitely many $a$. 

{\em Examples}.  Any $\cC$-morphism is uniform over $(\cC,\mu)$.  Suppose the $\Def(\cM)$-morphism $f: X\to Y$ is {\em uniformly finite over $\cC$\/}, that is,  $Y\in \cC$ and for some $e$ we have $|f^{-1}(y)|\le e$ for all $y\in Y$. Then $f$ is uniform over $(\cC,\mu)$. 

\medskip\noindent
{\em In the rest of this subsection we assume the $\Def(\cM)$-morphism $f: X\to Y$ is uniform over $(\cC,\mu)$}. Then $Y_{f,a}$ is empty for all but finitely many $a$, and we set
$$\mu_f(X)\ :=\ \sum_a a\mu(Y_{f,a})\in A.$$
{\em Example\/}:  for any $Y\in \cC$ the identity map $\operatorname{id}: Y\to Y$ is uniform over $(\cC,\mu)$ with 
$\mu_{\operatorname{id}}(Y)=\mu(Y)$. 

\medskip\noindent
Once checks easily that if $Y$ is the disjoint union of definable subsets $Y_1, Y_2$, then 
 $\mu_f(X)=\mu_{f_1}(X_1)+\mu_{f_2}(X_2)$, where
$$X_i\ :=\ f^{-1}(Y_i),\qquad  f_i\ :=\  f|_{X_i}: X_i\to Y_i\  \text{ for }i=1,2.$$
The next observation takes a bit more work:

\begin{lemma}\label{1} Let  $X$ be the disjoint union of definable subsets $X_1, X_2$, and set $f_i:= f|_{X_i}: X_i\to Y$ for $i=1,2$. Then 
$\mu_f(X)=\mu_{f_1}(X_1)+\mu_{f_2}(X_2)$. 
\end{lemma}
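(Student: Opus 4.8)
The plan is to prove Lemma~\ref{1} by fixing a point $y \in Y$ and relating the measure of the fiber $f^{-1}(y)$ to the measures of the two pieces $f_1^{-1}(y) = f^{-1}(y) \cap X_1$ and $f_2^{-1}(y) = f^{-1}(y) \cap X_2$. Since $X_1, X_2$ partition $X$, each fiber $f^{-1}(y)$ is the disjoint union of $f_1^{-1}(y)$ and $f_2^{-1}(y)$. Because these fibers lie in $\cC^{\iso}$ (by uniformity clause (1)) and $\cC^{\iso}$ satisfies the additivity condition (e), I would get $\mu\big(f^{-1}(y)\big) = \mu\big(f_1^{-1}(y)\big) + \mu\big(f_2^{-1}(y)\big)$ for each $y$. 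The key technical point is that the disjoint-union decomposition is pointwise compatible with additivity, and that $\mu$ on $\cC^{\iso}$ already satisfies (e).

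The main work is then to pass from this pointwise identity to the claimed identity among the weighted sums $\mu_f(X) = \sum_a a\,\mu(Y_{f,a})$. First I would partition $Y$ according to the pair of values $\big(\mu(f_1^{-1}(y)), \mu(f_2^{-1}(y))\big)$. For each pair $(b,c) \in A^2$ taken by these fiber-measures, set
$$Y_{b,c}\ :=\ \{y \in Y:\ \mu\big(f_1^{-1}(y)\big)=b,\ \mu\big(f_2^{-1}(y)\big)=c\}.$$
By uniformity of $f$ (applied to $f_1, f_2$, which are restrictions and hence also uniform over $(\cC,\mu)$ via the noted restriction property, taking $X' = X_1$ and $X' = X_2$), only finitely many such pairs occur, and each $Y_{b,c}$ is a definable subset of $Y$, indeed an object of $\cC$ since $Y \in \cC$ and (b) holds. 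On $Y_{b,c}$ the fiber measure for $f$ is the constant $b+c$, so $Y_{b,c} \subseteq Y_{f,b+c}$; moreover the $Y_{b,c}$ partition $Y$, refining the partition by the value of $\mu(f^{-1}(y))$.

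From here the computation is bookkeeping with additivity (e) on $Y \in \cC$. Using that the finitely many nonempty $Y_{b,c}$ are pairwise disjoint and that for each fixed $a$ we have $Y_{f,a} = \bigsqcup_{b+c=a} Y_{b,c}$, clause (e) gives $\mu(Y_{f,a}) = \sum_{b+c=a} \mu(Y_{b,c})$. Substituting into the definition and using the semiring distributive and commutativity laws,
$$\mu_f(X)\ =\ \sum_a a\,\mu(Y_{f,a})\ =\ \sum_{b,c}(b+c)\,\mu(Y_{b,c})\ =\ \sum_{b,c} b\,\mu(Y_{b,c}) + \sum_{b,c} c\,\mu(Y_{b,c}).$$
Finally, resumming each term by collecting over the relevant coordinate (so that $\sum_{b,c} b\,\mu(Y_{b,c}) = \sum_b b\,\mu(Y_{f_1,b})$, since $Y_{f_1,b} = \bigsqcup_c Y_{b,c}$, again by (e)) yields $\mu_{f_1}(X_1) + \mu_{f_2}(X_2)$. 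The step I expect to require the most care is verifying the finiteness and definability of the refined partition $\{Y_{b,c}\}$ and checking that the distributive regrouping of the double sum is justified purely by the semiring axioms, since $A$ need not be a ring and subtraction is unavailable; everything must be done additively with disjoint decompositions.
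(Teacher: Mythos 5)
Your proof is correct and follows essentially the same route as the paper's: the refined partition $Y_{b,c}$ you define is exactly the paper's $Y_{1,b}\cap Y_{2,c}$, and the regrouping of the double sum $\sum_{b,c}(b+c)\mu(Y_{b,c})$ via additivity (e) and the semiring distributive law is the identical computation. Your explicit justification of the pointwise identity $\mu\big(f^{-1}(y)\big)=\mu\big(f_1^{-1}(y)\big)+\mu\big(f_2^{-1}(y)\big)$ using (e) on $\cC^{\iso}$ is a point the paper leaves implicit (in asserting $Y_a=\bigcup_{b+c=a}Y_{b,c}$), and is a welcome addition.
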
 
\begin{proof} We have $\mu_{f_1}(X_1)=\sum_b b\mu(Y_{1,b})$ and $\mu_{f_2}(X_2)=\sum_c c\mu(Y_{2,c})$ where
$$Y_{1,b} := \{y\in Y:\ \mu\big(f^{-1}(y)\cap X_1\big)=b\},\qquad Y_{2,c} := \{y\in Y:\ \mu\big(f^{-1}(y)\cap X_2\big)=c\}.$$
 Set $Y_{b,c} :=  Y_{1,b}\cap Y_{2,c}$, so $Y$ is the disjoint union of the $Y_{b,c}$, and $Y_{b,c}$ is empty for all but finitely many $(b,c)$. Moreover,
$Y_{1,b}=\bigcup_c Y_{b,c}$, $Y_{2,c}=\bigcup_b Y_{b,c}$, and $Y_a=\bigcup_{b+c=a} Y_{b,c}$ (disjoint unions). Hence 
$$\mu_{f_1}(X_1)\ =\ \sum_{b}b\mu(Y_{1,b})\ =\ \sum_b b\sum_{c}\mu(Y_{b,c})\ =\ \sum_{b,c}b\mu(Y_{b,c}),$$
and likewise $\mu_{f_2}(X_2)=\sum_{b,c}c\mu(Y_{b,c})$. Hence
\begin{align*} \mu_f(X)\ &=\ \sum_a a\mu(Y_a)\ =\ \sum_a a\sum_{b+c=a}\mu(Y_{b,c})\ =\  \sum_{b,c}(b+c)\mu(Y_{b,c})\\
\ &=\ \sum_{b,c}b\mu(Y_{b,c})+\sum_{b,c}c\mu(Y_{b,c})\ =\ \mu_{f_1}(X_1)+\mu_{f_2} (X_2).\quad \qedhere
\end{align*} 
\end{proof} 

\subsection*{Fubini maps} As in the previous subsection, {\em $\mu:  \cC\to A$ is a semi-Fubini measure on $\cC$}. Here is a consequence of Lemma~\ref{1}:

\begin{lemma}\label{f1} Let $f: X\to Y$ be a $\cC$-morphism, with $X$ the disjoint union of definable $X_1,X_2\subseteq X$.
Suppose for $i=1,2$ that $\mu(X_i)=\sum_a a\mu(Y_{i,a})$ where
$Y_{i,a}=\{y\in Y:\ \mu\big(f^{-1}(y)\cap X_i\big)=a\}$. Then
$\mu(X)= \sum_a a\mu(Y_a)$.
\end{lemma}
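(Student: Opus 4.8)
The plan is to reduce everything to the additive decomposition already recorded in Lemma~\ref{1}. Since $f: X\to Y$ is a $\cC$-morphism, it is uniform over $(\cC,\mu)$, so $\mu_f(X)$ is defined; and because $f^{-1}(y)\cap X=f^{-1}(y)$, the very definition of $\mu_f$ gives $\mu_f(X)=\sum_a a\mu(Y_a)$, where $Y_a=Y_{f,a}$. Thus the identity to be proved, $\mu(X)=\sum_a a\mu(Y_a)$, is exactly $\mu(X)=\mu_f(X)$, and the entire task is to establish this single equality.

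Next I would unwind the two sides separately. On the measure side, $X$ is the disjoint union of the definable subsets $X_1,X_2$ of $M^m$, which are objects of $\cC$ by (b); hence additivity (e) yields $\mu(X)=\mu(X_1)+\mu(X_2)$. On the $\mu_f$ side, the restrictions $f_i:=f|_{X_i}: X_i\to Y$ are again uniform over $(\cC,\mu)$, and Lemma~\ref{1} applied to $f$ and this decomposition gives $\mu_f(X)=\mu_{f_1}(X_1)+\mu_{f_2}(X_2)$. So it suffices to match the two sets of summands, i.e.\ to show $\mu_{f_i}(X_i)=\mu(X_i)$ for $i=1,2$.

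For this matching step I would observe that $f_i^{-1}(y)=f^{-1}(y)\cap X_i$, so the sets $\{y\in Y:\ \mu(f_i^{-1}(y)\cap X_i)=a\}$ built into the definition of $\mu_{f_i}(X_i)$ are precisely the sets $Y_{i,a}$ appearing in the statement. Consequently $\mu_{f_i}(X_i)=\sum_a a\mu(Y_{i,a})$, and the hypothesis of the lemma says exactly that this equals $\mu(X_i)$. Chaining the three computations then gives $\mu(X)=\mu(X_1)+\mu(X_2)=\mu_{f_1}(X_1)+\mu_{f_2}(X_2)=\mu_f(X)=\sum_a a\mu(Y_a)$, completing the argument.

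The mathematical content is carried almost entirely by Lemma~\ref{1} together with additivity (e), so I do not anticipate a genuine obstacle. The only point requiring care—and the one place a slip is likely—is the bookkeeping that identifies the index sets $Y_{i,a}$ of the hypothesis with those internal to $\mu_{f_i}(X_i)$, i.e.\ verifying that passing to the restriction $f|_{X_i}$ leaves the relevant fibers $f^{-1}(y)\cap X_i$ unchanged. Conceptually, the lemma is just the assertion that validity of the Fubini formula on the two pieces of a disjoint decomposition of the \emph{domain} propagates to the whole domain, which is exactly what Lemma~\ref{1} packages.
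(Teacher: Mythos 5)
Your proof is correct and is essentially identical to the paper's: both recognize the hypothesis as saying $\mu(X_i)=\mu_{f_i}(X_i)$ for the restrictions $f_i=f|_{X_i}$, and then chain $\mu(X)=\mu(X_1)+\mu(X_2)=\mu_{f_1}(X_1)+\mu_{f_2}(X_2)=\mu_f(X)=\sum_a a\mu(Y_a)$, with Lemma~\ref{1} supplying the middle equality. Your extra bookkeeping verifying $Y_{f_i,a}=Y_{i,a}$ is a point the paper leaves implicit, but the argument is the same.
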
 
\begin{proof} The hypotheses of the lemma amount to  $\mu(X_i)=\mu_{f_i}(X_i)$ for $i=1,2$ where $f_i:=f|_{X_i}: X_i\to Y$. 
Hence $$\mu(X)\ =\ \mu(X_1)+\mu(X_2)\ =\ \mu_{f_1}(X_1)+ \mu_{f_2}(X_2)\ =\ \mu_f(X)\ =\ \sum_a a\mu(Y_a), $$
where we use Lemma~\ref{1} for the third equality. 
\end{proof} 

\noindent
A $\cC$-morphism
$f: X\to Y$ is said to be a {\em Fubini map} (with respect to $(\cC,\mu)$) if for all definable $X'\subseteq X$ and $Y'\subseteq Y$ with $f(X')\subseteq Y'$ and
such that $\mu\big(f^{-1}(y)\cap X'\big)$ takes the constant value $a$ on $Y'$ we have $\mu(X')=a\mu(Y')$. Note that if the $\cC$-morphism $f: X\to Y$ is a Fubini map, then $\mu(X)=\sum_a a\mu(Y_a)$, and for all definable $X'\subseteq X$ and $Y'\subseteq Y$ with $f(X')\subseteq Y'$ the restriction $f|_{X'}: X'\to Y'$ is also a Fubini map.

\begin{lemma} \label{lf} Suppose the $\cC$-morphisms $f: X\to Y$ and $g: Y\to Z$ are  Fubini maps. Then $g\circ f: X\to Z$ is a Fubini map. 
\end{lemma}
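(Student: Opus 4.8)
The plan is to verify the Fubini-map condition for $h:=g\circ f$ directly, first reducing to a statement over whole spaces and then decomposing $X$ according to the fiber measure of $f$. For the reduction, suppose definable $X'\subseteq X$ and $Z'\subseteq Z$ with $h(X')\subseteq Z'$ are given so that $\mu\big(h^{-1}(z)\cap X'\big)$ takes a constant value $a$ on $Z'$; I would set $Y':=f(X')$, a definable subset of $Y$ and hence an object of $\cC$ by (b). Since restrictions of Fubini maps are Fubini maps, $f|_{X'}: X'\to Y'$ is a surjective Fubini map and $g|_{Y'}: Y'\to Z'$ a Fubini map (note $g(Y')=h(X')\subseteq Z'$), with $h|_{X'}=(g|_{Y'})\circ(f|_{X'})$ having constant fiber measure $a$ over $Z'$. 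Thus it suffices to prove: if $f: X\to Y$ is a surjective Fubini map, $g: Y\to Z$ a Fubini map, and $\mu\big(h^{-1}(z)\big)=a$ for all $z\in Z$, then $\mu(X)=a\mu(Z)$.

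For the decomposition, since $f$ is a $\cC$-morphism, $\mu\big(f^{-1}(y)\big)$ takes only finitely many values $b$ on $Y$, and $Y$ is the disjoint union of the definable sets $Y_{f,b}$. Put $X_b:=f^{-1}(Y_{f,b})$, so $X$ is the disjoint union of the $X_b$ and $f|_{X_b}: X_b\to Y_{f,b}$ is a Fubini map with all fibers of measure $b$. Hence for every definable $W\subseteq Y_{f,b}$ one gets $\mu\big(f^{-1}(W)\big)=b\,\mu(W)$ from the Fubini-map property applied to $f^{-1}(W)\to W$; in particular, taking $W=g^{-1}(z)\cap Y_{f,b}$ and using the identity $h^{-1}(z)\cap X_b=f^{-1}\big(g^{-1}(z)\cap Y_{f,b}\big)$, I obtain
$$\mu\big(h^{-1}(z)\cap X_b\big)\ =\ b\,\mu\big(g^{-1}(z)\cap Y_{f,b}\big).$$

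The key step is to show that each piece satisfies the Fubini sum formula for $h$, namely $\mu(X_b)=\sum_c c\,\mu(Z_{b,c})$ with $Z_{b,c}:=\{z\in Z: \mu(h^{-1}(z)\cap X_b)=c\}$. Here $g|_{Y_{f,b}}: Y_{f,b}\to Z$ is a Fubini map, so $\mu(Y_{f,b})=\sum_{c'} c'\,\mu(W_{b,c'})$ where $W_{b,c'}:=\{z: \mu(g^{-1}(z)\cap Y_{f,b})=c'\}$. Multiplying by $b$ and using $\mu(X_b)=b\,\mu(Y_{f,b})$ gives $\mu(X_b)=\sum_{c'} bc'\,\mu(W_{b,c'})$. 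The displayed identity shows $Z_{b,c}=\bigsqcup_{bc'=c}W_{b,c'}$, so by additivity $\sum_c c\,\mu(Z_{b,c})=\sum_{c'} bc'\,\mu(W_{b,c'})=\mu(X_b)$, as wanted. With each piece satisfying this formula, the finite-partition version of Lemma~\ref{f1} (obtained from the two-piece case by induction on the number of pieces) yields $\mu(X)=\sum_c c\,\mu(Z_c)$, where $Z_c=\{z\in Z: \mu(h^{-1}(z))=c\}$. Since $\mu\big(h^{-1}(z)\big)=a$ for all $z\in Z$, we have $Z_a=Z$ and $Z_c=\emptyset$ otherwise, whence $\mu(X)=a\,\mu(Z)$.

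I expect the main obstacle to be the bookkeeping in the key step: correctly combining the multiplicative scaling coming from the constant-fiber-measure map $f|_{X_b}$ with the additive Fubini sum formula coming from $g|_{Y_{f,b}}$, in particular the relabeling $c=bc'$ (where distinct $c'$ may collapse to the same $c$) that lets the disjoint-union additivity of $\mu$ be applied. The reduction and the decomposition themselves are routine once the restriction-stability of Fubini maps and the finite extension of Lemma~\ref{f1} are in hand.
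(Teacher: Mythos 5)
Your proof is correct and follows essentially the same route as the paper's: both reduce via restriction-stability of Fubini maps to proving the fiber-sum formula $\mu(X)=\sum_c c\mu(Z_c)$ for $g\circ f$, partition $X$ along the level sets $Y_{f,b}$ of the fiber measure of $f$, establish the formula on each piece by combining the constant-fiber scaling of $f$ with the sum formula for the restriction of $g$ (your identity $\mu\big(h^{-1}(z)\cap X_b\big)=b\,\mu\big(g^{-1}(z)\cap Y_{f,b}\big)$ is exactly the paper's computation $\mu\big((g\circ f)^{-1}(z)\big)=ab$ on $Z_{g,b}$), and assemble the pieces with the $n$-part version of Lemma~\ref{f1}. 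The only difference is bookkeeping: the paper first reduces to a single piece with constant fiber measure and then decomposes $Z$, while you carry all pieces simultaneously.
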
 
\begin{proof}  For any definable $X'\subseteq X$ and $Z'\subseteq Z$ with $(g\circ f)(X')\subseteq Z'$, the restriction
of $g\circ f$ to a map $X'\to Z'$ is a composition $g'\circ f'$ where $f'$ is the restriction of $f$ to a map $X'\to f(X')$ and
$g'$ is the restriction of $g$ to a map $f(X')\to Z'$. Since these restrictions of $f$ and $g$ are also Fubini maps, it suffices to derive from the hypothesis of the lemma that $\mu(X)=\sum_c c\mu(Z_c)$, $Z_c:=\{z\in Z:\  \mu\big((g\circ f)^{-1}(z)\big)=c\}$. 
Take distinct $a_1,\dots, a_n\in A$ such that $Y=\bigcup_{i=1}^n Y_{a_i}$ and set $X_i:= f^{-1}(Y_{a_i})$, so $X$ is the disjoint union of $X_1,\dots, X_n$. By Lemma~\ref{f1}, with $n$ parts of $X$ instead of two, it suffices to show that for $i=1,\dots,n$ we have $\mu(X_i)=\sum_c c \mu(Z_{i,c})$ where
$Z_{i,c}=\{z\in Z:\ \mu\big((g\circ f)^{-1}(z)\cap X_i\big)=c\}$. In other words, we can reduce to considering instead of
$g\circ f$ its restrictions $g_i\circ f_i: X_i\to Z$ where $f_i=f|_{X_i}: X_i\to Y_{a_i}$ and $g_i=g|_{Y_{a_i}}: Y_{a_i}\to Z$.
Fixing one $i\in \{1,\dots,n\}$ and renaming $X_i, Y_{a_i}, f_i, g_i, a_i$ as $X, Y, f, g,a$ we have reduced to the case that $\mu\big(f^{-1}(y)\big)$ takes the constant value $a$ on $Y$, so $\mu(X)=a\mu(Y)$ (since $f$ is still a Fubini map). 

  Set $Z_{g,b}\ :=\ \{z\in Z:\ \mu\big(g^{-1}(z)\big)=b\}$. 
Then $Z_{g,b}$ is empty for all but finitely many $b$ and $\mu(Y)=\sum_b b\mu(Z_{g,b})$, since $g$ is a Fubini map, so
$$\mu(X)\ =\ a\mu(Y)\ =\ a\sum_b b\mu(Z_{g,b})\big)\ =\ \sum_{b}ab\mu(Z_{g,b})\ \ =\ \sum_c c\big(\sum_{ab=c}\mu(Z_{g,b})\big).$$
Thus to obtain $\mu(X)=\sum_c c\mu(Z_c)$ it is enough to show that $Z_c=\bigcup_{ab=c} Z_{g,b}$.

\medskip\noindent
Put $Y_{b}:=g^{-1}(Z_{g,b})$ and $X_{b}:=f^{-1}(Y_{b})$,
and set 
$$f_{b}\ :=\ f|_{X_{b}}\ :\  X_{b}\to Y_{b},\qquad  g_{b}\ :=\ g|_{Y_{b}}: Y_{b}\to Z_{g,b}.$$Now
$\mu\big(g_{b}^{-1}(z)\big)=b$ for $z\in Z_{g,b}$, and $\mu\big(f_{b}^{-1}(y)\big)=a$ for $y\in Y_{b}$. Thus given $z\in Z_{g,b}$,  $\mu\big(f_{b}^{-1}(y)\big)$ takes the constant value $a$ on $g_{b}^{-1}(z)$, so $f$ being a Fubini map we obtain for $z\in Z_{g,b}$:
$$\mu\left(f_{b}^{-1}\big(g_{b}^{-1}(z)\big)\right)\ =\ a\mu(g_{b}^{-1}(z))\ =\ ab.$$
Now $(g\circ f)^{-1}(z)=f_{b}^{-1}\big(g_{b}^{-1}(z)\big)$ for $z\in Z_{g,b}$. Hence, if $ab=c$, then $Z_{g,b}\subseteq Z_c$, and so if $ab\ne c$, then $Z_{g,b}$ is disjoint from $Z_c$. Thus indeed $Z_c=\bigcup_{ab=c} Z_{g,b}$.
\end{proof}

\subsection*{Three observations} The first two lemmas below will be used in Section~\ref{special}. 

\begin{lemma}\label{smfub} If $\cM$ is strongly minimal $($which includes $M$ being infinite$)$, then $X\mapsto \operatorname{MR}(X)$ is an $\N_{\trop}$-valued Fubini measure on $\Def(\cM)$. 
\end{lemma}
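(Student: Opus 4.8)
The plan is to reduce every clause in the definition of a Fubini measure to a standard property of Morley rank in a strongly minimal structure, keeping careful track of the translation imposed by the tropical semiring: in $\N_{\trop}$ the additive zero is $-\infty$, the multiplicative identity is $0$, the sum $a+b$ is $\max(a,b)$, and the product $a\cdot b$ is the ordinary integer sum (with $-\infty$ absorbing). First I would collect the facts to be used. In a strongly minimal $\cM$, every definable $X\subseteq M^m$ has a Morley rank $\operatorname{MR}(X)$ agreeing with the dimension attached to the pregeometry $\operatorname{acl}$, so that $\operatorname{MR}(X)\in\{-\infty\}\cup\{0,1,\dots,m\}$, with $\operatorname{MR}(X)=-\infty$ exactly when $X=\emptyset$ and $\operatorname{MR}(X)=0$ exactly when $X$ is finite and nonempty; moreover $\operatorname{MR}(X\cup Y)=\max\big(\operatorname{MR}(X),\operatorname{MR}(Y)\big)$ for all definable $X,Y\subseteq M^m$, Morley rank is definable in definable families, and it is additive along definable maps. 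I would cite these as the classical results they are rather than reprove them.

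With these in hand the verification of (d), (e), (F1), (F2) is bookkeeping. For (d): $\operatorname{MR}(\emptyset)=-\infty$ is the zero of $\N_{\trop}$ and $\operatorname{MR}(\{x\})=0$ is its unit. For (e): disjointness is not even needed, since $\operatorname{MR}(X\cup Y)=\max(\operatorname{MR}(X),\operatorname{MR}(Y))$ is by definition the tropical sum $\operatorname{MR}(X)+\operatorname{MR}(Y)$. For (F1): for a $\Def(\cM)$-morphism $f:X\to Y$ with $X\subseteq M^m$, every fiber $f^{-1}(y)$ is a definable subset of $X$, so its rank lies in the finite set $\{-\infty,0,1,\dots,m\}$, whence $\{\operatorname{MR}(f^{-1}(y)):y\in Y\}$ is finite. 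For (F2): the fibers $\big(f^{-1}(y)\big)_{y\in Y}$ form a definable family, so by definability of Morley rank in families each $Y_a=\{y\in Y:\operatorname{MR}(f^{-1}(y))=a\}$ is definable.

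The one clause carrying real content is (F3), which is exactly the additivity of Morley rank. Suppose $\operatorname{MR}(f^{-1}(y))$ takes the constant value $a$ on $Y$; I must show $\operatorname{MR}(X)=a\cdot\operatorname{MR}(Y)$, i.e.\ $\operatorname{MR}(X)=a+\operatorname{MR}(Y)$ in the integer arithmetic of $\N_{\trop}$. Passing to a sufficiently saturated elementary extension (Morley rank is unchanged) and writing the rank of a definable set as the maximal $\operatorname{acl}$-dimension over its defining parameters $B$ of a point it contains, additivity of dimension gives, for $x\in X$, $\dim(x/B)=\dim(f(x)/B)+\dim(x/f(x)B)$; taking $f(x)$ generic in $Y$ and $x$ generic in its fiber yields $\operatorname{MR}(X)=\operatorname{MR}(Y)+a$ when $a\ge 0$ and $Y\neq\emptyset$. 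The degenerate cases match the tropical conventions: if $a=-\infty$ then all fibers are empty, so $X=\emptyset$ and both sides equal $-\infty$ (here $-\infty$ is absorbing, as required by $0\cdot a=0$); and if $Y=\emptyset$ then $X=\emptyset$ and again both sides are $-\infty$. As a byproduct, applying (F3) to a definable bijection recovers the motivic invariance of $\operatorname{MR}$ under $\Def(\cM)$-isomorphisms.

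The main obstacle lies not in the bookkeeping but in the two inputs (F2) and (F3). In a self-contained account they would require, respectively, elimination of the quantifier $\exists^{\infty}$ (available in strongly minimal theories) together with an induction on rank, and additivity of $\operatorname{acl}$-dimension via the exchange property. Since both are standard for strongly minimal structures I would invoke them directly, so that the proof consists mainly of aligning the rank arithmetic with the operations of $\N_{\trop}$.
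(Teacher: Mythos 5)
Your proof is correct. The paper gives no proof of this lemma at all --- it is stated ``for the record'' as well known --- and your argument is exactly the standard one being invoked: the tropical reading of (d), (e), (F1), definability of Morley rank in families (via elimination of $\exists^{\infty}$) for (F2), and fiber-additivity of rank via $\operatorname{acl}$-dimension for (F3), with the degenerate $-\infty$ cases matched correctly to the absorbing zero of $\N_{\trop}$.
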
 

\noindent
This is well-known and just stated for the record. 

\begin{lemma}\label{munu} Suppose $\mu,\nu$ are $A$-valued Fubini measures on $\cC:=\Def(C)$, and $\mu(Y)=\nu(Y)$ for all definable $Y\subseteq C$. Then $\mu=\nu$.
\end{lemma}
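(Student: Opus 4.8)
The plan is to prove $\mu(X)=\nu(X)$ for every object $X\in\cC=\Def(C)$ by induction on the least $n$ with $X\subseteq C^n$. The base cases are immediate: for $n=0$ the set $X$ is either empty or a singleton, so clause (d) applies; and for $n=1$ the equality $\mu(X)=\nu(X)$ is exactly the hypothesis. So the real content is the inductive step.

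For the inductive step I assume the claim for subsets of $C^n$ and take a definable $X\subseteq C^{n+1}$. Identifying $C^{n+1}$ with $C^n\times C$, I consider the projection $p\colon X\to p(X)\subseteq C^n$ onto the first $n$ coordinates; since $X\in\cC$ and $p(X)\in\cC$ by (b), this is a $\cC$-morphism. For each $y\in p(X)$ the fiber is $p^{-1}(y)=\{y\}\times X_y$, where $X_y:=\{t\in C:(y,t)\in X\}\subseteq C$, and the coordinate projection that drops the first $n$ entries is a definable bijection $p^{-1}(y)\to X_y$ between two objects of $\cC$, hence a $\cC$-isomorphism. By the motivic property, $\mu\big(p^{-1}(y)\big)=\mu(X_y)$ and $\nu\big(p^{-1}(y)\big)=\nu(X_y)$. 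Since $X_y\subseteq C$, the hypothesis gives $\mu(X_y)=\nu(X_y)$, so $\mu\big(p^{-1}(y)\big)=\nu\big(p^{-1}(y)\big)$ for every $y\in p(X)$.

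It follows that the level sets coincide: setting $Y_a:=\{y\in p(X):\mu(p^{-1}(y))=a\}=\{y\in p(X):\nu(p^{-1}(y))=a\}$, each $Y_a\subseteq C^n$ is definable by (F2) and only finitely many are nonempty by (F1). The inductive hypothesis then yields $\mu(Y_a)=\nu(Y_a)$ for every $a$, and applying the Fubini consequence $\mu(X)=\sum_a a\mu(Y_a)$ to both measures gives $\mu(X)=\sum_a a\mu(Y_a)=\sum_a a\nu(Y_a)=\nu(X)$, completing the induction.

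The argument is essentially routine, and I expect the only point requiring care to be the reduction of an arbitrary fiber to a subset of $C$, namely the observation that $p^{-1}(y)$ and $X_y$ are $\cC$-isomorphic, so that motivic invariance applies and the hypothesis on subsets of $C$ can be brought to bear; this is exactly what forces the $\mu$- and $\nu$-level sets $Y_a$ to agree. Everything else is bookkeeping with the finiteness and definability clauses (F1), (F2) and the additive consequence of Fubini.
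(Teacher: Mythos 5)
Your proof is correct and is essentially the paper's own argument: the paper's (very terse) proof also proceeds by induction on $n$, applying ``Fubini'' to the coordinate projection of a definable $X\subseteq C^{n+1}$ onto $C^n$. The details you supply --- identifying each fiber with a definable subset of $C$ via a $\cC$-isomorphism so the hypothesis forces the $\mu$- and $\nu$-level sets $Y_a$ to coincide, then invoking the inductive hypothesis and the sum formula $\mu(X)=\sum_a a\mu(Y_a)$ --- are exactly the intended bookkeeping.
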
 
\begin{proof} By induction on $n$, show that $\mu(Y)=\nu(Y)$ for definable $Y\subseteq C^n$. For the inductive step and definable $Y\subseteq C^{n+1}$, use ``Fubini'' and the projection map $$\quad\qquad\qquad \qquad\qquad (y_1,\dots, y_{n+1})\mapsto (y_1,\dots, y_n): Y\to C^n.\qquad\qquad\qquad\qquad \qedhere $$
\end{proof}

\begin{lemma}  If $A, B$ are commutative semirings and $\mu: \cC\to A$ and $\nu: \cC\to B$ are Fubini measures, then $(\mu,\nu): \cC\to A\times B$ is a Fubini measure. 
\end{lemma}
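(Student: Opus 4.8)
The plan is to verify conditions (d), (e), (f) for $(\mu,\nu)$ directly, working componentwise wherever possible. First I would record that $A\times B$, equipped with componentwise addition and multiplication, is again a commutative semiring, with $0_{A\times B}=(0_A,0_B)$ and $1_{A\times B}=(1_A,1_B)$; this is a routine check (note in particular that $0_{A\times B}\cdot(a,b)=0_{A\times B}$). Conditions (d) and (e) are then immediate: for empty $X$ we get $(\mu(X),\nu(X))=(0_A,0_B)$ and for a singleton $X$ we get $(1_A,1_B)$, while for disjoint $X,Y\subseteq M^m$ the additivity of $\mu$ and of $\nu$ yields additivity of the pair.

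The substance is condition (f) for a $\cC$-morphism $f: X\to Y$. For (F1), since $\{\mu(f^{-1}(y)):y\in Y\}$ and $\{\nu(f^{-1}(y)):y\in Y\}$ are each finite, the set $\{(\mu,\nu)(f^{-1}(y)):y\in Y\}$ is contained in the product of these two finite sets and is therefore finite. For (F2), given $(a,b)\in A\times B$, the level set $Y_{(a,b)}=\{y\in Y:(\mu,\nu)(f^{-1}(y))=(a,b)\}$ equals the intersection $\{y\in Y:\mu(f^{-1}(y))=a\}\cap\{y\in Y:\nu(f^{-1}(y))=b\}$ of two definable sets, hence is itself definable. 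For (F3), if $(\mu,\nu)(f^{-1}(y))$ takes the constant value $(a,b)$ on $Y$, then $\mu(f^{-1}(y))$ is constantly $a$ and $\nu(f^{-1}(y))$ is constantly $b$, so (F3) for $\mu$ and for $\nu$ give $\mu(X)=a\mu(Y)$ and $\nu(X)=b\nu(Y)$, whence $(\mu,\nu)(X)=(a\mu(Y),b\nu(Y))=(a,b)\cdot(\mu,\nu)(Y)$ by the definition of multiplication in $A\times B$.

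Since every clause reduces to the corresponding fact for $\mu$ and for $\nu$, no real obstacle arises; the only two places where the two measures genuinely interact are (F1), handled by the product-of-finite-sets bound, and (F2), handled by writing the pair-level set as an intersection of the two individual level sets rather than as a single level set of one measure.
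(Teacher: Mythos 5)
Your proof is correct; the paper states this lemma without any proof at all, treating it as a routine observation, and your componentwise verification is exactly the intended argument. In particular, you correctly identify the only two points requiring a word of justification beyond pure componentwise reduction: finiteness of the value set via the product-of-finite-sets bound for (F1), and definability of the pair-level set $Y_{(a,b)}$ as an intersection of the two individual level sets for (F2).
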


\section{Proof of the Theorem}\label{proof}

\noindent
{\it In this section $\mu$ is an $A$-valued Fubini measure on $\Def(C)$}. Till further notice we also assume $\cM$ is $\omega$-saturated, indicating at the end of the section how to reduce to that case. 

\medskip\noindent
By recursion on $r$ we now introduce a sequence $(\cC_r)$ of full subcategories of $\Def(\cM)$, such that $ \cC_r\subseteq \cC_{r+1}$ for all $r$ and $\bigcup_r \cC_r=\Def(C)^{\f}$:

\medskip\noindent
 $\cC_0:= \Def(C)^{\iso}$, and the objects of $\cC_{r+1}$ are the definable $X\subseteq M^m$ for which there exists a definable
$f: X\to Y$ with $Y\in \cC_r$, such that $f^{-1}(y)\in  \cC_r$ for all $y\in Y$; such $f$ is called a $\cC_{r+1}$-witness. Thus the objects of $\cC_r$ are the definable $X\subseteq M^m$ that are $r$-step fiberable over $C$ as defined in the Introduction.
In particular, the objects of $\cC_0$ are the definable $X\subseteq M^m$ that admit a definable injective map $X\to C^n$. Also, if the definable set $X\subseteq M^m$ admits a definable map $X\to C^n$ all whose fibers are finite, then $X\in \cC_1$.  

\medskip\noindent
 Induction on $r$ shows that $\cC_r$ satisfies conditions (a), (b), (c) for all $r$, and that $\cC_r^{\iso}=\cC_r$ for all $r$.  In the inductive step we note that if $f: X\to Y$ is a $\cC_{r+1}$-witness and $X'\subseteq X$ and $Y'\subseteq Y$ are definable with
$f(X')\subseteq Y'$, then $f|_{X'}: X'\to Y'$ is also a $\cC_{r+1}$-witness. Induction on $r$ also shows that there is at most one
$A$-valued Fubini measure on $\cC_r$ extending the given $A$-valued Fubini measure $\mu$ on $\Def(C)$. In the beginning of the previous section we have extended $\mu$ to an $A$-valued Fubini measure on $\cC_0$, also to be denoted by $\mu$.

\subsection*{More about $\cC_0$} {\it In this subsection we assume that $C$ is stably embedded in $\cM$}.
Let  $\cal{S}\subseteq M^k\times C^n$ be definable. Think of $\cal{S}$ as the definable family of sets
$\cal{S}(p)\subseteq C^n$ parametrized by the points $p\in M^k$. 

\begin{lemma}\label{l1}  The set $\{\mu\big(\cal{S}(p)\big):\ p\in M^k\}$ is finite, and for every $a$ the set $\{p\in M^k:\ \mu\big(\cal{S}(p)\big)=a\}$ is definable.
\end{lemma}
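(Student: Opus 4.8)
The plan is to use stable embeddedness to trade the parametrization by $p\in M^k$ for a parametrization by a tuple from a power of $C$, thereby moving the whole family into $\Def(C)$, where the desired finiteness and definability are already available as the ``Consequence'' recorded at the start of the Semi-Fubini subsection (apply (F1) and (F2) to a projection).

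Concretely, first absorb the parameters defining $\cal{S}$: write $\cal{S}=\cal{S}'(q_0,-)$ for some $0$-definable $\cal{S}'\subseteq M^{l_0}\times M^k\times C^n$ and some $q_0\in M^{l_0}$. Viewing $\cal{S}'$ as a $0$-definable subset of $M^{l_0+k}\times M^n$, stable embeddedness yields a $0$-definable $\cal{T}\subseteq M^{l}\times M^n$ such that for every $(q,p)$ there is $c\in C^{l}$ with $\cal{S}'(q,p)\cap C^n=\cal{T}(c)\cap C^n$; specializing $q=q_0$ gives, for each $p\in M^k$, some $c\in C^{l}$ with $\cal{S}(p)=\cal{T}(c)\cap C^n$. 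Now set $\cal{T}^{\circ}:=\cal{T}\cap(C^{l}\times C^n)$, a definable subset of $C^{l+n}$ and hence an object of $\Def(C)$, with $\cal{T}^{\circ}(c)=\cal{T}(c)\cap C^n$ for $c\in C^{l}$. Thus every $\cal{S}(p)$ equals some $\cal{T}^{\circ}(c)$.

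Applying the Consequence to $\cal{T}^{\circ}\subseteq C^{l}\times C^n$ (with $X=C^{l}$, $Y=C^n$ in $\Def(C)$) shows that $\{\mu\big(\cal{T}^{\circ}(c)\big):\ c\in C^{l}\}$ is finite and that each level set $C^{l}_a:=\{c\in C^{l}:\ \mu\big(\cal{T}^{\circ}(c)\big)=a\}$ is definable. Since $\{\mu\big(\cal{S}(p)\big):\ p\in M^k\}\subseteq\{\mu\big(\cal{T}^{\circ}(c)\big):\ c\in C^{l}\}$, the first assertion follows. For the second, I would verify the equivalence
$$\mu\big(\cal{S}(p)\big)=a\quad\Longleftrightarrow\quad \exists c\in C^{l}:\ \big(\cal{S}(p)=\cal{T}^{\circ}(c)\ \text{and}\ c\in C^{l}_a\big).$$
The right-to-left implication is immediate, since $\cal{S}(p)=\cal{T}^{\circ}(c)$ forces $\mu\big(\cal{S}(p)\big)=\mu\big(\cal{T}^{\circ}(c)\big)$; conversely, choosing any $c$ with $\cal{S}(p)=\cal{T}^{\circ}(c)$ (which exists by the previous paragraph) places that $c$ in $C^{l}_a$ when $\mu\big(\cal{S}(p)\big)=a$. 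The condition $\cal{S}(p)=\cal{T}^{\circ}(c)$ is expressible by $\forall y\in C^n\,\big((p,y)\in\cal{S}\leftrightarrow (c,y)\in\cal{T}^{\circ}\big)$ and so is definable on pairs $(p,c)$, while $C^{l}_a$ is definable; existentially quantifying $c$ over the definable set $C^{l}$ then shows that $\{p\in M^k:\ \mu\big(\cal{S}(p)\big)=a\}$ is definable.

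The only real obstacle is securing \emph{uniformity} in the reduction: one needs a single definable family $\cal{T}^{\circ}$ over a fixed power $C^{l}$ that captures every $\cal{S}(p)$ simultaneously, rather than merely knowing that each individual $\cal{S}(p)$ is $C$-definable. This is exactly what the displayed (uniform) formulation of stable embeddedness supplies, which is why that formulation is invoked rather than the pointwise ``definable subsets of $C^n$ are $C$-definable'' characterization. Everything else is routine first-order manipulation together with the invariance $\mu\big(\cal{S}(p)\big)=\mu\big(\cal{T}^{\circ}(c)\big)$ whenever the two sets coincide.
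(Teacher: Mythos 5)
Your proposal is correct and follows essentially the same route as the paper: use stable embeddedness to replace the $M^k$-parametrized family by a single definable family $\cal{T}^{\circ}\subseteq C^l\times C^n$ inside $\Def(C)$, apply (F1)/(F2) to its projection onto $C^l$, and then transfer finiteness and definability of the level sets back to $M^k$. The only differences are presentational: you make explicit the parameter-absorption step and the final first-order transfer of definability (which the paper leaves implicit), and your use of the uniform formulation of stable embeddedness lets you bypass the $\omega$-saturation that the paper's one-line argument invokes.
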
 
\begin{proof} Since $\cM$ is $\omega$-saturated and $C$ is stably embedded in $\cM$, we have definable
$\cal{T}\subseteq C^m\times C^n$ such that for every $p\in M^k$ there is  $x\in C^m$ with  $\cal{S}(p)=\cal{T}(x)$. 
Now apply (F1) and (F2) to the projection map $\cal{T}\to C^m$ and use that its fibers are $\cC_0$-isomorphic to the sections 
$\cal{T}(x)$. 
\end{proof}

\noindent
Let $\cal F$ be a definable family of {\em injective\/} maps from subsets of $M^m$ to $C^n$.  Then $\mu\big(f^{-1}(y)\big)\in \{0,1\}\subseteq A$ for $(f,y)\in \cal F \times C^n$.
For $f\in \cal F$, set
$$D(f)\ :=\ \text{domain}(f)\subseteq M^m, \qquad R(f):= f\big(D(f)\big)\subseteq C^n,$$
so $D(f)\in \cC_0$ and $\mu\big(D(f)\big)=\mu\big(R(f)\big)$.   The $R(f)$ with $f\in \cal F$ are the members of a definable family of subsets of $C^n$, so by Lemma~\ref{l1}:

\begin{lemma} \label{l2} The set $\{\mu\big(D(f)\big):\ f\in \cal F\}$ is finite
and for every $a\in A$ the set $\{f\in \cal F:\ \mu\big(D(f)\big)=a\}$
is definable. 
\end{lemma}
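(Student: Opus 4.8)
The plan is to reduce everything to Lemma~\ref{l1} by passing from each domain $D(f)$ to its image $R(f)$, which lives inside $C^n$. Write $\cal F\subseteq P\times M^m\times C^n$ with definable $P\subseteq M^k$, so that $\cal F(p)$ is the graph of $f_p$ for each $p\in P$. First I would form the definable family of ranges
$$\cal R\ :=\ \big\{(p,y)\in P\times C^n:\ \text{there is }x\in M^m\text{ with }(p,x,y)\in \cal F\big\},$$
a definable subset of $M^k\times C^n$ (since $P\subseteq M^k$) with $\cal R(p)=R(f_p)$ for all $p\in P$. Thus $\cal R$ is exactly of the form to which Lemma~\ref{l1} applies.

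Next I would invoke Lemma~\ref{l1} for $\cal R$ to get that $\{\mu\big(\cal R(p)\big):\ p\in P\}$ is finite and that $\{p\in P:\ \mu\big(\cal R(p)\big)=a\}$ is definable for each $a$. Since each $f_p$ is injective, it restricts to a definable bijection $D(f_p)\to R(f_p)$, whence $D(f_p)\in \cC_0$ and $\mu\big(D(f_p)\big)=\mu\big(R(f_p)\big)=\mu\big(\cal R(p)\big)$, as already noted just before the lemma. Substituting $\mu\big(D(f_p)\big)$ for $\mu\big(\cal R(p)\big)$ gives both assertions at the level of parameters: $\{\mu\big(D(f_p)\big):\ p\in P\}$ is finite and $\{p\in P:\ \mu\big(D(f_p)\big)=a\}$ is definable. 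Finally I would translate these parameter statements into the claimed statements about maps $f\in \cal F$ via the convention in the Notations, under which definability of a property of $f_p$ is by definition definability of the corresponding set of parameters $p$. This yields that $\{\mu\big(D(f)\big):\ f\in \cal F\}$ is finite and that $\{f\in \cal F:\ \mu\big(D(f)\big)=a\}$ is definable.

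There is no genuine obstacle here, as the lemma is an immediate corollary of Lemma~\ref{l1}. The only points needing a little care are checking that the existential projection defining $\cal R$ really yields a definable family whose $p$-th section equals $R(f_p)$, and keeping the bookkeeping straight between a parameter $p\in P$ and the map $f_p$ it indexes, so that ``definable set of $f$'s'' is correctly read through the family convention.
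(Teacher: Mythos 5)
Your proof is correct and is essentially the paper's own argument: the paper likewise observes that $\mu\big(D(f)\big)=\mu\big(R(f)\big)$ by injectivity, notes that the $R(f)$ form a definable family of subsets of $C^n$, and concludes by Lemma~\ref{l1}. Your explicit construction of $\cal R$ by existential projection and the parameter-level bookkeeping merely spell out what the paper leaves implicit.
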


\noindent
To start our inductive proof of Theorem~\ref{thm} we shall need:

\begin{lemma}\label{lind} Let $\cal{S}\subseteq M^k\times M^m$ be definable such that $\cal{S}(p)\in  \cC_0$ for all $p\in M^k$. Then the set 
$\{\mu\big(\cal{S}(p)\big):\ p\in M^k\}$ is finite, and the set $\{p\in M^k:\ \mu\big(\cal{S}(p)\big)=a\}$ is definable for every $a$.
\end{lemma}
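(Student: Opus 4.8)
The plan is to reduce Lemma~\ref{lind} to Lemma~\ref{l2} by organizing the fiberwise injections $\mathcal{S}(p)\to C^n$ into finitely many definable families. The difficulty is that membership $\mathcal{S}(p)\in\mathcal{C}_0$ only guarantees, separately for each $p$, the existence of \emph{some} definable injection $\mathcal{S}(p)\to C^n$, where both the defining formula and the target dimension $n$ may a priori depend on $p$.

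First I would set up the uniformization. Let $\sigma(p,x)$ be an $\mathcal{L}_M$-formula defining $\mathcal{S}$. For each $\mathcal{L}$-formula $\psi(x,y;z)$ (with $x$ an $m$-tuple, $y$ an $n$-tuple, $z$ a parameter-tuple) and each $n$, let $P_{\psi,n}$ be the set of $p\in M^k$ for which some $w$ makes $\psi(\cdot,\cdot;w)$ define an injection with domain $\mathcal{S}(p)$ and codomain $C^n$; this is expressible by an $\mathcal{L}_M$-formula, so each $P_{\psi,n}$ is definable. Since the objects of $\mathcal{C}_0$ are exactly the definable sets admitting a definable injection into a power of $C$, every $p$ lies in some $P_{\psi,n}$, so these definable sets cover $M^k$. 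As there are only countably many pairs $(\psi,n)$, $\omega$-saturation yields a finite subcover $M^k=\bigcup_{i=1}^N P_{\psi_i,n_i}$: otherwise the complements $M^k\setminus P_{\psi,n}$ would have the finite intersection property, and a point realizing the resulting finitely satisfiable countable type would lie in no $P_{\psi,n}$, contradicting coverage.

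Next, for each $i$ I would form the definable set $Q_i:=\{(p,w):p\in P_{\psi_i,n_i},\ \psi_i(\cdot,\cdot;w)\text{ defines an injection }\mathcal{S}(p)\to C^{n_i}\}$ and the associated definable family $\mathcal{F}_i$ of injective maps from subsets of $M^m$ into $C^{n_i}$, indexed by $(p,w)\in Q_i$, where the member $f_{p,w}$ is the injection defined by $\psi_i(\cdot,\cdot;w)$. Crucially, no definable choice is needed: I keep all valid pairs $(p,w)$ rather than selecting one $w$ per $p$. By construction $D(f_{p,w})=\mathcal{S}(p)$, so $\mu\big(D(f_{p,w})\big)=\mu\big(\mathcal{S}(p)\big)$. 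Applying Lemma~\ref{l2} to $\mathcal{F}_i$, the set $\{\mu(D(f)):f\in\mathcal{F}_i\}$ is finite and $\{(p,w)\in Q_i:\mu(D(f_{p,w}))=a\}$ is definable for each $a$.

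Finally I would transfer back and combine. Finiteness is immediate, since $\{\mu(\mathcal{S}(p)):p\in M^k\}\subseteq\bigcup_i\{\mu(D(f)):f\in\mathcal{F}_i\}$ is a finite union of finite sets. For definability, $\{p\in P_{\psi_i,n_i}:\mu(\mathcal{S}(p))=a\}$ is the projection to the $p$-coordinate of the definable set $\{(p,w)\in Q_i:\mu(D(f_{p,w}))=a\}$, hence definable; and because the $P_{\psi_i,n_i}$ cover $M^k$, the set $\{p\in M^k:\mu(\mathcal{S}(p))=a\}$ is the finite union over $i$ of these, so definable. The main obstacle is the first step: the saturation-and-compactness argument producing a uniform finite list of formulas and target dimensions, which is what lets the merely $p$-indexed existence statements be absorbed into finitely many genuine definable families to which Lemma~\ref{l2} applies.
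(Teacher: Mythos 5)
Your proposal is correct and follows essentially the same route as the paper: the paper likewise uses $\omega$-saturation to extract a finite set of ``shapes'' $(d,n,\cal{T})$ (your pairs $(\psi_i,n_i)$ with parameter variables of length $d$) covering all $p\in M^k$, organizes all valid parameter choices into definable families of injections with domains of the form $\cal{S}(p)$, and then appeals to Lemma~\ref{l2}. One small remark: your aside about there being ``only countably many pairs $(\psi,n)$'' is unnecessary (and need not hold if $\cal{L}$ is uncountable), but this is harmless since $\omega$-saturation realizes any finitely satisfiable partial type over a finite parameter set regardless of how many formulas it contains, which is exactly the compactness step you describe.
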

\begin{proof}  Augmenting the language $\cal{L}$ of $\cM$ by names for finitely many elements of $M$ and expanding $\cM$ accordingly we arrange that $\cal{S}$ is $0$-definable.
For each $p\in M^k$ we have a definable injective map $f_p: \cal{S}(p)\to C^{n(p)}$ with $n(p)\in \N$.  So for each $p\in M^k$ there are $d,n$ and a $0$-definable $\cal{T}\subseteq M^{d+k+m+n}$ such that 

\medskip
(*)  for some $x\in M^d$ the set $\cal{T}(x,p)\subseteq M^{m+n}$ is the graph of an injective map $$f_p\ :\  \cal{S}(p)\to C^n.$$ 
By $\omega$-saturation this gives a finite set $E$ of tuples $(d,n,\cal{T})$ as above such that for each $p\in M^k$ some
tuple $\tau=(d,n,\cal{T})\in E$ satisfies (*).
Each $\tau=(d,n,\cal{T})\in E$ yields a definable family $\cal{F}_\tau$ of injective maps from subsets of $M^m$ into $C^n$ such that  for each
$f\in \cal{F}_\tau$ the domain of $f$ is of the form $\cal{S}(p)$ with $p\in M^k$, and for each $p\in M^k$ there is a $\tau\in E$ and an $f\in \cal{F}_\tau$ with domain $\cal{S}(p)$. It remains to appeal to Lemma~\ref{l2}. 
\end{proof}

\subsection*{Going beyond $\cC_0$} {\it In this subsection $C$ is stably embedded in $\cM$.} 
We now make the following  inductive assumption (which is satisfied for $r=0$ by Lemma~\ref{lind}): \begin{enumerate}
 \item[(IA1)] the $A$-valued Fubini measure $\mu$ on $\cC_0$ extends to an $A$-valued Fubini measure on $\cC_r$; we denote such a (necessarily unique) extension also by $\mu$;
 \item[(IA2)] for all definable $\cal{S}\subseteq M^k\times M^m$ with $\cal{S}(p)\in  \cC_r$ for all $p\in M^k$, the set $\{\mu\big(\cal{S}(p)\big):\ p\in M^k\}$ is finite, and the set $\{p\in M^k:\ \mu\big(\cal{S}(p)\big)=a\}$ is definable for every $a$.  
\end{enumerate} 
In particular, for every definable $X\subseteq M^m$ and $\cC_{r+1}$-witness $f: X\to Y$ the set 
 $\{\mu\big(f^{-1}(y)\big):\ y\in Y\}$ is finite, and $Y_{f,a}:=\{y\in Y:\ \mu\big(f^{-1}(y)\big)=a\}$ is definable for every $a$. It follows that such $f$ is uniform over $(\cC_r,\mu)$, and so $\mu_f(X)$ is defined, and Lemma~\ref{1} and the remark preceding it are applicable. 
 
 In the next lemma and corollary, $X\subseteq M^m$ is definable and $f: X\to Y$ is a $\cC_{r+1}$-witness. The goal here is to show that $\mu_f(X)$ does not depend on $f$.

 \begin{lemma}\label{l0++} Let $\phi: Y\to W$ be a $\cC_r$-morphism such that $h:=\phi\circ f: X\to W$ is also a $\cC_{r+1}$-witness. Then $\mu_h(X)=\mu_f(X)$.
 \end{lemma}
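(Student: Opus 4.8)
The plan is to imitate the structure of the proof of Lemma~\ref{lf}: first reduce to the case in which $\mu\big(f^{-1}(y)\big)$ is constant on $Y$, and then run a level-set computation using that $\phi$, being a $\cC_r$-morphism, is a Fubini map with respect to $(\cC_r,\mu)$ (since $\mu$ is a Fubini measure on $\cC_r$ by (IA1)). Note that $h=\phi\circ f$ is a $\cC_{r+1}$-witness by hypothesis, so both $f$ and $h$ are uniform over $(\cC_r,\mu)$ and $\mu_f(X)$, $\mu_h(X)$ are defined.

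For the reduction, let $a_1,\dots,a_s$ be the finitely many distinct values of $\mu\big(f^{-1}(y)\big)$ for $y\in Y$, put $Y_i:=Y_{f,a_i}$ (definable by (IA2)) and $X_i:=f^{-1}(Y_i)$, so that $X$ is the disjoint union of the $X_i$. By the definition of $\mu_f$ we have $\mu_f(X)=\sum_i a_i\mu(Y_i)$, while the multi-part version of Lemma~\ref{1}, applied to $h$ and the decomposition $X=\bigcup_i X_i$, gives $\mu_h(X)=\sum_i \mu_{h_i}(X_i)$ with $h_i:=h|_{X_i}\colon X_i\to W$. Hence it suffices to prove $\mu_{h_i}(X_i)=a_i\mu(Y_i)$ for each $i$. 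Restricting $f$ to $f_i\colon X_i\to Y_i$ and $\phi$ to $\phi|_{Y_i}\colon Y_i\to W$ (note $f(X_i)\subseteq Y_i$), this reduces the lemma to the case that $\mu\big(f^{-1}(y)\big)$ takes a constant value $a$ on all of $Y$, where we must show $\mu_h(X)=a\mu(Y)$.

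So assume $\mu\big(f^{-1}(y)\big)=a$ for all $y\in Y$. The key point is the fiber identity $\mu\big(h^{-1}(w)\big)=a\,\mu\big(\phi^{-1}(w)\big)$ for $w\in W$. Indeed, $h^{-1}(w)=f^{-1}\big(\phi^{-1}(w)\big)$ lies in $\cC_r$ (because $h$ is a $\cC_{r+1}$-witness), and $\phi^{-1}(w)$ is a definable subset of $Y\in\cC_r$, so the restriction $f^{-1}\big(\phi^{-1}(w)\big)\to\phi^{-1}(w)$ of $f$ is a $\cC_r$-morphism; since $\mu$ is a Fubini measure on $\cC_r$ it is a Fubini map, and all its fibers $f^{-1}(y)$ have measure $a$, whence $\mu\big(h^{-1}(w)\big)=a\,\mu\big(\phi^{-1}(w)\big)$.

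Finally, set $Z_{\phi,b}:=\{w\in W:\mu\big(\phi^{-1}(w)\big)=b\}$. The fiber identity gives $W_{h,c}=\bigcup_{ab=c}Z_{\phi,b}$ (a disjoint union), so that $\mu_h(X)=\sum_c c\,\mu(W_{h,c})=\sum_b ab\,\mu(Z_{\phi,b})=a\sum_b b\,\mu(Z_{\phi,b})$. Since $\phi$ is a Fubini map on $\cC_r$ we have $\sum_b b\,\mu(Z_{\phi,b})=\mu(Y)$, and therefore $\mu_h(X)=a\mu(Y)$, as required. I expect the only genuinely delicate step to be the fiber identity in the constant case; once it is in hand, the summation is a direct transcription of the one in Lemma~\ref{lf}, and the finiteness and definability of all the level sets involved is guaranteed by (IA2) together with the uniformity of $\cC_{r+1}$-witnesses over $(\cC_r,\mu)$.
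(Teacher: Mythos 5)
Your proof is correct and takes essentially the same route as the paper's: reduce via Lemma~\ref{1} (and the definition of $\mu_f$) to the case where $\mu\big(f^{-1}(y)\big)$ is constant on $Y$, establish the fiber identity $\mu\big(h^{-1}(w)\big)=a\mu\big(\phi^{-1}(w)\big)$ by applying ``Fubini'' on $\cC_r$ to the restriction of $f$ over $\phi^{-1}(w)$, and then sum over the level sets of $\phi$ using ``Fubini'' for $\phi$. Your write-up just makes explicit two points the paper leaves terse, namely the decomposition $W_{h,c}=\bigcup_{ab=c}W_{\phi,b}$ and the restriction of $f$ to a map $h^{-1}(w)\to\phi^{-1}(w)$.
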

 \begin{proof} The set $Y$ is the disjoint union of its definable subsets $Y_{f,a}$, and $Y_{f,a}$ is empty for all but finitely many $a$.  By Lemma~\ref{1} and the remark preceding it we can focus on one such $Y_{f,a}$, replace $f$ by its restriction to $f^{-1}(Y_{f,a})$ with $Y_{f,a}$ as codomain, and $\phi$ by its restriction to $Y_{f,a}$. Renaming then $f^{-1}(Y_{f,a})$ and $Y_{f,a}$ as $X$ and $Y$ we arrange $\mu\big(f^{-1}(y)\big)=a$ for all $y\in Y$. Hence $\mu_f(X)=a\mu(Y)$. 

If $w\in W_{\phi,b}$, then $h^{-1}(w)\in  \cC_r$, $f|_{h^{-1}(w)}:h^{-1}(w)\to Y$ is a $\cC_r$-morphism, and
 $h^{-1}(w)=f^{-1}\big(\phi^{-1}(w)\big)$, so $\mu\big(h^{-1}(w)\big)=a\mu\big(\phi^{-1}(w)\big)=ab$. Hence 
$$\mu_h(X)\ =\ \sum_b ab\mu(W_{\phi,b})\ =\ a\sum_b b\mu(W_{\phi,b})\ =\ a\mu(Y),$$
using ``Fubini'' for $\phi$ in the last step.  Thus $\mu_h(X)=\mu_f(X)$.
 \end{proof} 
 
\begin{cor}\label{c0++} Suppose the definable map $g: X \to Z$ is also a $\cC_{r+1}$-witness. Then 
$\mu_f(X) = \mu_g(X)$.
\end{cor}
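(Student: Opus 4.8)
The plan is to compare both $\mu_f(X)$ and $\mu_g(X)$ with $\mu_{(f,g)}(X)$, where $(f,g)\colon X \to Y\times Z$ is the definable product map $x \mapsto \big(f(x),g(x)\big)$, applying Lemma~\ref{l0++} twice through the two coordinate projections.

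First I would verify that $(f,g)$ is itself a $\cC_{r+1}$-witness. Its codomain $Y\times Z$ lies in $\cC_r$ by the product-closure condition (c), while each fiber $(f,g)^{-1}(y,z) = f^{-1}(y)\cap g^{-1}(z)$ is a definable subset of $f^{-1}(y)\in \cC_r$ and hence lies in $\cC_r$ by (b). Thus $(f,g)$ meets the definition of a $\cC_{r+1}$-witness.

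Now let $\pi_Y\colon Y\times Z\to Y$ and $\pi_Z\colon Y\times Z\to Z$ be the coordinate projections; as definable maps between objects of $\cC_r$ they are $\cC_r$-morphisms, and $\pi_Y\circ(f,g)=f$, $\pi_Z\circ(f,g)=g$. To invoke Lemma~\ref{l0++} I take its witness $X\to Y$ to be $(f,g)$ and its $\cC_r$-morphism $\phi$ to be $\pi_Y$; the resulting composite $\pi_Y\circ(f,g)$ is our original $f$, which is a $\cC_{r+1}$-witness by hypothesis, so the lemma gives $\mu_f(X)=\mu_{(f,g)}(X)$. Repeating with $\pi_Z$ in place of $\pi_Y$ yields $\mu_g(X)=\mu_{(f,g)}(X)$, and combining the two equalities gives $\mu_f(X)=\mu_g(X)$.

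The only thing needing attention is the bookkeeping of hypotheses in Lemma~\ref{l0++}: for each projection one must confirm that both the product witness $(f,g)$ and the composite ($f$, respectively $g$) are $\cC_{r+1}$-witnesses and that the projection is a genuine $\cC_r$-morphism. All three facts are immediate from the first step and the corollary's hypotheses, so I expect no real obstacle here; the substantive work has already been done in Lemma~\ref{l0++}, and this corollary is a clean symmetrization of it via the product map $(f,g)$.
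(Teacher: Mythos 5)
Your proof is correct and takes essentially the same route as the paper: the paper also forms the product map $(f,g)$, notes it is a $\cC_{r+1}$-witness, and applies Lemma~\ref{l0++} twice via the coordinate projections. The only cosmetic difference is that the paper takes the codomain of $(f,g)$ to be the image $D=\{(f(x),g(x)):x\in X\}\subseteq Y\times Z$ rather than all of $Y\times Z$; both choices work, since conditions (b) and (c) put either codomain in $\cC_r$.
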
 
\begin{proof}  Set $D:=\{\big(f(x), g(x)\big):\  x\in X\}$, a definable subset of $Y\times Z$. The definable map
$(f,g): X \to D$ is a $\cC_{r+1}$-witness.  For the  projection maps $\pi_1: D \to Y$ and
$\pi_2: D \to Z$ we have $f=\pi_1\circ (f,g)$ and $g=\pi_2\circ (f,g)$. By Lemma~\ref{l0++}  this yields $\mu_f(X)=\mu_{(f,g)}(X)=\mu_g(X)$. \end{proof}  

\noindent
This allows us to define $\mu:  \cC_{r+1}\to A$ by $\mu(X):=\mu_f(X)$, where $f: X \to Y$ is any $\cC_{r+1}$-witness. This function $\mu:  \cC_{r+1}\to A$ extends the Fubini measure $\mu$ on $\cC_r$  and satisfies conditions {\rm (d), (e)}. 
Towards proving that $\mu:  \cC_{r+1}\to A$ is a Fubini measure on $\cC_{r+1}$ we first show that it is a semi-Fubini measure.

Let $Y\in  \cC_r$, and
let $\cal F$ be a definable family of maps from subsets of $M^m$ to $Y$ such that $f^{-1}(y)\in \cC_r$ for all $(f,y)\in \cal F \times Y$; so each $f\in \cal F$ is a $\cC_{r+1}$-witness.
By (IA2), $\mu\big(f^{-1}(y)\big)$ takes only finitely many values for $(f,y)\in \cal F \times Y$, and
 $\{(f,y):\in \cal F\times Y:\  \mu\big(f^{-1}(y)\big)=a\}$ is definable, for every $a$. 
For $f\in \cal F$, set
$$D(f)\ :=\ \text{domain}(f)\subseteq M^m, \qquad R(f)_a\ :=\ \{y\in Y:\ \mu\big(f^{-1}(y)\big)=a\},$$
so $D(f)\in \cC_{r+1}$ with $\mu\big(D(f)\big)=\mu_f\big(D(f)\big)=\sum_a a\mu\big(R(f)_a\big)$.  Also the set
$$\{a:\ R(f)_a\ne \emptyset \text{ for some }f\in \cal F\}$$
is finite, and so is $\{\mu\big(R(f)_a\big):\ (f,a)\in \cal F\times A\}$ by (IA2),
since the $R(f)_a$ with $(f,a)\in \cal F\times A$ are the members of a definable family of subsets of $Y$. By this uniform definability of the $R(f)_a$ with only finitely many relevant $a$, we obtain: 

\begin{lemma} \label{l2+} The set $\{\mu\big(D(f)\big):\ f\in \cal F\}$ is finite
and for every $a\in A$ the set $\{f\in \cal F:\ \mu\big(D(f)\big)=a\}$
is definable. 
\end{lemma}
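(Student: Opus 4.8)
The plan is to reduce the statement to finitely many applications of the inductive assumption (IA2) and then to assemble the outcomes by means of the weighted-sum formula $\mu\big(D(f)\big)=\sum_a a\mu\big(R(f)_a\big)$ recorded just above. The essential point is that although the index $a$ ranges over the abstract semiring $A$, only finitely many $a$ are relevant: by the remark preceding the lemma the set $A_0:=\{a:\ R(f)_a\ne\emptyset\text{ for some }f\in\cal F\}$ is finite, say $A_0=\{a_1,\dots,a_N\}$, while for $a\notin A_0$ we have $R(f)_a=\emptyset$ for all $f\in\cal F$. Thus $\mu\big(D(f)\big)=\sum_{i=1}^N a_i\mu\big(R(f)_{a_i}\big)$ for every $f\in\cal F$, a finite expression whose ingredients I can control one at a time.

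First I would, for each fixed $i\in\{1,\dots,N\}$, view $\big(R(f)_{a_i}\big)_{f\in\cal F}$ as a definable family of subsets of $Y$. Writing $\cal F=(f_p)_{p\in P}$ with definable $P\subseteq M^k$, the set $\cal{S}_i:=\{(p,y)\in P\times Y:\ \mu\big(f_p^{-1}(y)\big)=a_i\}$ is definable by the definability statement preceding the lemma, it satisfies $\cal{S}_i(p)=R(f_p)_{a_i}$, and each $\cal{S}_i(p)\subseteq Y\in\cC_r$ lies in $\cC_r$ by condition (b). Applying (IA2) to $\cal{S}_i$ then gives that $\{\mu\big(R(f)_{a_i}\big):\ f\in\cal F\}$ is a finite set $B_i\subseteq A$, and that for each $b\in A$ the set $\{f\in\cal F:\ \mu\big(R(f)_{a_i}\big)=b\}$ is definable.

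For finiteness of $\{\mu\big(D(f)\big):\ f\in\cal F\}$ I would note that the tuple $\big(\mu(R(f)_{a_1}),\dots,\mu(R(f)_{a_N})\big)$ takes values in the finite set $B_1\times\cdots\times B_N$, so its image under $(b_1,\dots,b_N)\mapsto\sum_{i=1}^N a_ib_i$ is finite and equals $\{\mu\big(D(f)\big):\ f\in\cal F\}$. For definability, I would fix $a\in A$ and write
$$\{f\in\cal F:\ \mu\big(D(f)\big)=a\}\ =\ \bigcup\,\Big\{f\in\cal F:\ \mu\big(R(f)_{a_i}\big)=b_i\text{ for }i=1,\dots,N\Big\},$$
the union taken over the finitely many tuples $(b_1,\dots,b_N)\in B_1\times\cdots\times B_N$ with $\sum_{i=1}^N a_ib_i=a$. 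Each set in this finite union is a finite intersection of the definable sets produced in the previous step, hence definable, and therefore so is the union.

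I do not expect a genuine obstacle here; the work is bookkeeping built on (IA2). The one point requiring care—and the reason the conclusion is not literally immediate from (IA2)—is that $a$ lives in the abstract semiring $A$ rather than in the structure $\cM$, so one cannot quantify over $a$ inside an $\cal L$-formula. This is precisely what isolating the finite set $A_0$ circumvents: it replaces the problematic quantification over $A$ by a fixed finite index set $\{1,\dots,N\}$, after which condition (IA2) and a finite union do the rest.
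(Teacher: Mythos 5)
Your proposal is correct and follows essentially the same route as the paper: the paper's own justification is the paragraph preceding the lemma, which notes that only finitely many $a$ have $R(f)_a\ne\emptyset$ for some $f\in\cal F$, that the $R(f)_a$ form a definable family of subsets of $Y$ to which (IA2) applies, and then concludes via the formula $\mu\big(D(f)\big)=\sum_a a\mu\big(R(f)_a\big)$ --- exactly your argument, which you merely spell out in more detail (fixing each $a_i$ separately and assembling the level sets by finite unions and intersections). Your closing observation, that the finiteness of the relevant index set is what circumvents the impossibility of quantifying over the abstract semiring $A$ inside an $\cal L$-formula, is precisely the point of the paper's phrase ``with only finitely many relevant $a$.''
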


\begin{lemma}\label{l3+}  Let $\cal{S}\subseteq M^k\times X$ be definable with $X\in  \cC_{r+1}$. Then the set $$\{\mu\big(\cal{S}(p)\big):\ p\in M^k\}$$ is finite, and for every $a$ the set $\{p\in M^k:\ \mu\big(\cal{S}(p)\big)=a\}$ is definable.
\end{lemma}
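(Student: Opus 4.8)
The plan is to reduce this directly to Lemma~\ref{l2+}, into which the real content has essentially been isolated. Since $X\in \cC_{r+1}$, I would first fix a $\cC_{r+1}$-witness $g: X\to Y$, so that $Y\in \cC_r$ and $g^{-1}(y)\in \cC_r$ for every $y\in Y$. The idea is then to convert the family $\big(\cal{S}(p)\big)_{p\in M^k}$ of definable subsets of $X$ into a \emph{definable family of maps into $Y$}, by restricting the single witness $g$ to each section $\cal{S}(p)$.

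Concretely, set $\cal{F}:=\{(p,x,g(x)):\ p\in M^k,\ x\in \cal{S}(p)\}\subseteq M^k\times X\times Y$. For each $p\in M^k$ the section $\cal{F}(p)$ is the graph of the restriction $g_p:=g|_{\cal{S}(p)}:\cal{S}(p)\to Y$, a map from the subset $\cal{S}(p)$ of $X\subseteq M^m$ into $Y$, with domain $D(g_p)=\cal{S}(p)$. Thus $\cal{F}$ is a definable family of maps from subsets of $M^m$ into the fixed $Y\in \cC_r$, parametrized by $M^k$.

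The only hypothesis of Lemma~\ref{l2+} needing a word is the condition on fibers: for $(p,y)\in M^k\times Y$ the fiber $g_p^{-1}(y)=\cal{S}(p)\cap g^{-1}(y)$ is a definable subset of $g^{-1}(y)\in \cC_r$, hence lies in $\cC_r$ by condition (b). So each $g_p$ is a $\cC_{r+1}$-witness and $\cal{F}$ satisfies the hypotheses of Lemma~\ref{l2+}. That lemma then yields that $\{\mu\big(D(g_p)\big):\ p\in M^k\}=\{\mu\big(\cal{S}(p)\big):\ p\in M^k\}$ is finite and that $\{f\in \cal{F}:\ \mu\big(D(f)\big)=a\}$ is definable for every $a$, which by the convention on definable families means exactly that $\{p\in M^k:\ \mu\big(\cal{S}(p)\big)=a\}$ is definable. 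This is the assertion.

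I expect no serious obstacle: the only real step is recognizing that restricting the fixed witness $g$ to the sections $\cal{S}(p)$ produces a definable family of witnesses with a \emph{common} codomain $Y\in \cC_r$, so that Lemma~\ref{l2+} applies verbatim. Should one prefer to avoid Lemma~\ref{l2+}, the same conclusion follows by applying (IA2) twice: first to the $(p,y)$-indexed family of sets $g^{-1}(y)\cap \cal{S}(p)\in \cC_r$, and then, for each of the finitely many relevant $a$, to the $p$-indexed family $Y_{p,a}:=\{y\in Y:\ \mu\big(g^{-1}(y)\cap \cal{S}(p)\big)=a\}$ of subsets of $Y\in \cC_r$; combining these finitely-valued, definably-stratified data through the finite sum $\mu\big(\cal{S}(p)\big)=\sum_a a\,\mu(Y_{p,a})$ gives the result, but the route through Lemma~\ref{l2+} is cleaner.
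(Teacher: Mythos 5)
Your proof is correct and is essentially the paper's own argument: the paper likewise takes a $\cC_{r+1}$-witness $g: X\to Y$ and applies Lemma~\ref{l2+} to the family of restrictions $g|_{\cal{S}(p)}: \cal{S}(p)\to Y$, definably parametrized by $p\in M^k$. Your write-up simply makes explicit the verifications the paper leaves tacit (the definability of the family $\cal{F}$ and the fact that each fiber $g_p^{-1}(y)=\cal{S}(p)\cap g^{-1}(y)$ lies in $\cC_r$ by condition (b)).
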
 
\begin{proof}  Take a $\cC_{r+1}$-witness $g: X\to Y$ and apply Lemma~\ref{l2+} to the family of maps
$g|_{\cal{S}(p)}: \cal{S}(p)\to Y$, definably parametrized by the points $p\in M^k$. 
\end{proof} 
 
\noindent
Thus by Lemma~\ref{l3+}:

\begin{cor}\label{cor++} The function $\mu: \cC_{r+1}\to A$ is a semi-Fubini measure on $\cC_{r+1}$.
\end{cor}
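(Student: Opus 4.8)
The plan is to verify the two requirements still missing for $\mu\colon \cC_{r+1}\to A$ to be a semi-Fubini measure: that every $\cC_{r+1}$-morphism satisfies (F1) and (F2), and that $\mu$ is invariant under $\cC_{r+1}$-iso\-mor\-phism. Conditions (d), (e) and the well-definedness of $\mu$ (via Corollary~\ref{c0++}) are already in hand, so these are the only points left.

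For (F1) and (F2) I would let $f\colon X\to Y$ be any $\cC_{r+1}$-morphism, with $Y\subseteq M^n$, and view the graph of $f$ as a definable family $\cal{S}\subseteq M^n\times X$ with $\cal{S}(y)=f^{-1}(y)$. For $y\in Y$ each fiber $f^{-1}(y)$ is a definable subset of $X\in\cC_{r+1}$, hence lies in $\cC_{r+1}$ by (b), while for $y\notin Y$ we have $\cal{S}(y)=\emptyset\in\cC_{r+1}$; thus $\cal{S}(y)\in\cC_{r+1}$ for all $y$. Lemma~\ref{l3+} then applies directly: the set $\{\mu(\cal{S}(y)):\ y\in M^n\}$ is finite, giving (F1), and $\{y\in M^n:\ \mu(\cal{S}(y))=a\}$ is definable for each $a$, so intersecting with $Y$ shows $Y_a=\{y\in Y:\ \mu(f^{-1}(y))=a\}$ is definable, giving (F2).

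For isomorphism invariance I would let $\beta\colon X\to Y$ be a $\cC_{r+1}$-isomorphism (a definable bijection with $X,Y\in\cC_{r+1}$) and pick a $\cC_{r+1}$-witness $g\colon Y\to W$, so $W\in\cC_r$ and $g^{-1}(w)\in\cC_r$ for all $w$. Then $g\circ\beta\colon X\to W$ is again a $\cC_{r+1}$-witness: its fiber over $w$ is $\beta^{-1}(g^{-1}(w))$, which $\beta$ carries bijectively onto $g^{-1}(w)\in\cC_r$, so it is $\cC_r$-isomorphic to an object of $\cC_r$ and hence lies in $\cC_r=\cC_r^{\iso}$. Since $\mu$ is a Fubini, in particular iso-invariant, measure on $\cC_r$, we get $\mu((g\circ\beta)^{-1}(w))=\mu(g^{-1}(w))$ for every $w\in W$; therefore the level sets defining $\mu_{g\circ\beta}(X)$ and $\mu_g(Y)$ coincide, and $\mu(X)=\mu_{g\circ\beta}(X)=\mu_g(Y)=\mu(Y)$.

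I do not expect any serious obstacle, since once Lemma~\ref{l3+} is available the family conditions drop out immediately. The only step requiring a genuine (if short) argument is isomorphism invariance: in the absence of (F3) this cannot simply be read off from (F1), (F2), (d), (e), and must instead be obtained by transporting a witness across $\beta$ and invoking the iso-invariance of $\mu$ already known on $\cC_r$.
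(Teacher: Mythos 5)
Your proof is correct and takes essentially the same route as the paper: the paper's entire proof of this corollary is an appeal to Lemma~\ref{l3+}, applied exactly as you do, by viewing the graph of a $\cC_{r+1}$-morphism as a definable family $\cal{S}\subseteq M^n\times X$, with conditions (d), (e) and well-definedness already noted when $\mu$ was defined on $\cC_{r+1}$. Your witness-transport argument for isomorphism invariance correctly fills in a point the paper leaves implicit, and your closing observation is apt: before the Fubini property is established, iso-invariance cannot be read off from (F1), (F2), (d), (e) alone, so this short extra argument (using $\cC_r=\cC_r^{\iso}$ and the iso-invariance of $\mu$ on $\cC_r$) is genuinely needed.
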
 

\noindent
Proving that $\mu: \cC_{r+1}\to A$ is a Fubini measure on $\cC_{r+1}$ now reduces to showing that every $\cC_{r+1}$-morphism
$f: X\to Y$ is a Fubini map with respect to $(\cC_{r+1},\mu)$. First we note that any $\cC_{r+1}$-witness
$f: X\to Y$ is a Fubini map with respect to $(\cC_{r+1},\mu)$ by the very definition of $\mu$ on $\cC_{r+1}$. More generally:

\begin{lemma}\label{ll0} Let $Y\in \cC_r$ and let $f: X\to Y$ be a $\cC_{r+1}$-morphism. Then $f$ is a Fubini map.
\end{lemma}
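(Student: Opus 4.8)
The plan is to factor $f$ as a composition of two maps that are already known to be Fubini maps, and then invoke Lemma~\ref{lf}. The essential difficulty is that $f$ itself need not be a $\cC_{r+1}$-witness: although $X\in \cC_{r+1}$ and $Y\in \cC_r$, the fibers $f^{-1}(y)$ are only guaranteed to lie in $\cC_{r+1}$, not in $\cC_r$, so we cannot directly apply the remark (just before the lemma) that every $\cC_{r+1}$-witness is a Fubini map. The whole point is to reroute $f$ through a genuine witness.

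First I would use $X\in \cC_{r+1}$ to fix a $\cC_{r+1}$-witness $g\colon X\to Z$, so that $Z\in \cC_r$ and $g^{-1}(z)\in \cC_r$ for all $z\in Z$. Then I form the definable set $D:=\{(f(x),g(x)):\ x\in X\}\subseteq Y\times Z$ and the definable surjection $(f,g)\colon X\to D$, exactly as in the proof of Corollary~\ref{c0++}. Since $Y,Z\in \cC_r$ we get $Y\times Z\in \cC_r$ by (c) and hence $D\in \cC_r$ by (b). The fiber of $(f,g)$ over a point $(y,z)\in D$ is $f^{-1}(y)\cap g^{-1}(z)$, a definable subset of $g^{-1}(z)\in \cC_r$, hence itself in $\cC_r$. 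Thus $(f,g)\colon X\to D$ is a $\cC_{r+1}$-witness, and is therefore a Fubini map with respect to $(\cC_{r+1},\mu)$.

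Next I would observe that $f=\pi_1\circ (f,g)$, where $\pi_1\colon D\to Y$ is the first projection. Here $D,Y\in \cC_r$, so $\pi_1$ is a $\cC_r$-morphism; by the inductive assumption (IA1), $\mu$ is a Fubini measure on $\cC_r$, so every $\cC_r$-morphism, in particular $\pi_1$, is a Fubini map. The notion of Fubini map for $\pi_1$ refers only to definable subsets of $D$ and of $Y$, all of which lie in $\cC_r$, where $\mu$ and its extension to $\cC_{r+1}$ coincide; hence $\pi_1$ is equally a Fubini map with respect to $(\cC_{r+1},\mu)$. Both $(f,g)$ and $\pi_1$ being $\cC_{r+1}$-morphisms and Fubini maps with respect to the semi-Fubini measure $\mu$ on $\cC_{r+1}$ (Corollary~\ref{cor++}), Lemma~\ref{lf} yields that $f=\pi_1\circ (f,g)$ is a Fubini map, as desired.

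I expect the only genuinely substantive point, and the step most easily glossed over, to be the coherence claim that $\pi_1$, a Fubini map at level $r$, remains one at level $r+1$. This rests on two facts: that ``Fubini map'' is tested only against definable subsets of the source and target, which here stay inside $\cC_r$, and that the extended measure on $\cC_{r+1}$ restricts to the given $\mu$ on $\cC_r$. Everything else is a routine assembly of the witness construction and Lemma~\ref{lf}.
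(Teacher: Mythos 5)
Your proof is correct and follows essentially the same route as the paper's: fix a $\cC_{r+1}$-witness $g\colon X\to Z$, note that $(f,g)$ is itself a $\cC_{r+1}$-witness (hence a Fubini map), write $f$ as the projection composed with $(f,g)$, and invoke Lemma~\ref{lf}; the paper simply uses $Y\times Z$ as the codomain where you use the image $D$, an immaterial difference. Your explicit verification that the projection, a Fubini map at level $r$, remains one at level $r+1$ (because the test sets stay in $\cC_r$ and the extended measure restricts to $\mu$ there) is a point the paper leaves tacit, and it is a welcome clarification rather than a deviation.
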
 
\begin{proof} Take a $\cC_{r+1}$-witness $g: X \to Z$. Then $(f,g): X\to Y\times Z$ is a $\cC_{r+1}$-witness, hence a Fubini map, and so $f=\pi\circ (f,g)$ is a Fubini map by Lemma~\ref{lf}, since the natural projection map $\pi: Y\times Z\to Y$ is a $\cC_r$-morphism, hence a Fubini  map. 
\end{proof}

\begin{prop}\label{prop+} The function $\mu : \cC_{r+1}\to A$ is a Fubini measure on $\cC_{r+1}$.
\end{prop}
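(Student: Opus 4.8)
The plan is to build on what the excerpt has already established: by Corollary~\ref{cor++} the function $\mu\colon\cC_{r+1}\to A$ is a semi-Fubini measure, so conditions (d), (e), (F1), (F2) hold, and (as recorded just after that corollary) it only remains to verify that every $\cC_{r+1}$-morphism $f\colon X\to Y$ is a Fubini map with respect to $(\cC_{r+1},\mu)$. First I would reduce this to a ``top-level'' statement: since $X,Y\in\cC_{r+1}$ are closed under definable subsets, any restriction $f|_{X'}\colon X'\to Y'$ with $X'\subseteq X$, $Y'\subseteq Y$ definable and $f(X')\subseteq Y'$ is again a $\cC_{r+1}$-morphism, and its fibers over $Y'$ are the $f^{-1}(y)\cap X'$. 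Hence it suffices to prove: \emph{whenever $\mu\big(f^{-1}(y)\big)$ takes a constant value $a$ on $Y$, then $\mu(X)=a\mu(Y)$}.

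To prove this top-level statement I would choose a $\cC_{r+1}$-witness $g\colon Y\to W$ for $Y$, so that $W\in\cC_r$ and all fibers $g^{-1}(w)\in\cC_r$. The composite $g\circ f\colon X\to W$ is then a $\cC_{r+1}$-morphism whose codomain lies in $\cC_r$, so Lemma~\ref{ll0} makes it a Fubini map; consequently $\mu(X)=\sum_c c\,\mu(W_c)$, where $W_c=\{w\in W:\mu\big((g\circ f)^{-1}(w)\big)=c\}$. The crux is to evaluate these fibers. For $w$ with $\mu\big(g^{-1}(w)\big)=b$ I would restrict $f$ to a map $f^{-1}\big(g^{-1}(w)\big)\to g^{-1}(w)$; its domain is a definable subset of $X$, hence in $\cC_{r+1}$, and its codomain $g^{-1}(w)$ lies in $\cC_r$, so by Lemma~\ref{ll0} this restriction is again a Fubini map. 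Since its fibers are exactly the $f^{-1}(y)$ with $y\in g^{-1}(w)$, all of measure $a$, I obtain
\[
\mu\big((g\circ f)^{-1}(w)\big)\ =\ \mu\Big(f^{-1}\big(g^{-1}(w)\big)\Big)\ =\ a\,\mu\big(g^{-1}(w)\big)\ =\ ab.
\]

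Writing $W_{g,b}=\{w\in W:\mu\big(g^{-1}(w)\big)=b\}$, it then follows that $W_c=\bigcup_{ab=c}W_{g,b}$ as a disjoint union, exactly as in the proof of Lemma~\ref{lf}. Combining this with $\mu(Y)=\mu_g(Y)=\sum_b b\,\mu(W_{g,b})$, which holds because $g$ is a $\cC_{r+1}$-witness and $\mu$ is defined on $\cC_{r+1}$ via such witnesses, I would conclude
\[
\mu(X)\ =\ \sum_c c\,\mu(W_c)\ =\ \sum_b ab\,\mu(W_{g,b})\ =\ a\sum_b b\,\mu(W_{g,b})\ =\ a\mu(Y),
\]
using distributivity in the semiring $A$ for the third equality.

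I expect the main obstacle to be the fiber computation in the middle step. The delicate point is to invoke Lemma~\ref{ll0} rather than appeal to the Fubini-map property of $f$ on $\cC_{r+1}$, which is precisely what we are trying to prove and would be circular; this is why one must verify carefully that each restriction of $f$ is genuinely a $\cC_{r+1}$-morphism \emph{with codomain back in $\cC_r$}. This is the inductive-step analogue of the bookkeeping in Lemma~\ref{lf}, and getting the indexing of the disjoint decomposition $W_c=\bigcup_{ab=c}W_{g,b}$ right is where the argument must be executed with care.
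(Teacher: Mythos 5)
Your proof is correct and takes essentially the same route as the paper: reduce to the case where $\mu\big(f^{-1}(y)\big)$ is constant equal to $a$ on $Y$, compose with a $\cC_{r+1}$-witness $g: Y\to W$, and apply Lemma~\ref{ll0} twice — to the restrictions of $f$ over the fibers $g^{-1}(w)$ to get $\mu\big((g\circ f)^{-1}(w)\big)=ab$, and to the composite $g\circ f$ itself. The only difference is cosmetic: the paper reduces further to the case where $\mu\big(g^{-1}(w)\big)$ takes a constant value $b$ on $W$, whereas you keep $g$ general and carry out the summation over $b$ explicitly via the disjoint decomposition $W_c=\bigcup_{ab=c}W_{g,b}$, as in Lemma~\ref{lf}; both versions are sound.
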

\begin{proof} 
Let $f: X\to Y$ be a $\cC_{r+1}$-morphism; it is enough to show that then $f$ is a Fubini map.
We reduce to the case that $\mu\big(f^{-1}(y)\big)$ takes the constant value $a$ on $Y$. We claim that then $\mu(X)=a\mu(Y)$.
Take a $\cC_{r+1}$-witness $\phi: Y\to Z$, set $h:=\phi\circ f$, and reduce further to the case that $\mu\big(\phi^{-1}(z)\big)$ takes the constant value $b$ on $Z$; then $\mu(Y)=b\mu(Z)$.
Let $z\in Z$. Then $\phi^{-1}(z)\in \cC_r$, so $f|_{h^{-1}(z)}: h^{-1}(z)\to \phi^{-1}(z)$ is a Fubini map by Lemma~\ref{ll0}. Hence 
$\mu\big(h^{-1}(z)\big)=a\mu\big(\phi^{-1}(z)\big)=ab$.  But $h: X\to Z$ is also a Fubini map by Lemma~\ref{ll0}, so $\mu(X)=ab\mu(Z)=a\mu(Y)$, as promised.
\end{proof}

\noindent
We have now shown that (IA1) holds with $r+1$ instead of $r$. It remains to show that (IA2) is inherited in going from $r$ to $r+1$.

\subsection*{Completing the inductive step}  We need a more explicit witnessing scheme for being in $\cC_r$ in order to complete the inductive step, and also for the later reduction to $\omega$-saturated $\cM$. (For now we do not impose $\omega$-saturation, stably embeddedness, or inductive hypotheses (IA1), (IA2).) 
 
 \medskip\noindent
 Let $X\subseteq M^m$ be definable. We need to extend the notion of $X$ being $r$-step fiberable over $C$
  in such a way that for $\omega$-saturated $\cM$ it is equivalent to the already given notion. The idea is to require for $r\ge 1$ a $\cC_r$-witness $f: X \to  f(X)\subseteq M^n$, and 
 $\cC_{r-1}$-witnesses for the fibers $f^{-1}(y)$ with $y\in f(X)$ to come from a definable family of maps, and a $\cC_{r-1}$-witness for $f(X)$.
 It is convenient to combine the latter into $\cC_{r-1}$-witnesses for the cartesian products $f^{-1}(y)\times f(X)$. This leads to a rather tricky recursive definition for a tuple of maps and natural numbers to be an {\em $r$-step fibering of $X$ over $C$}. To make this perhaps more palatable to the reader we first give the definitions for $r=0, 1, 2$, and then for general $r\ge 1$. 

\medskip\noindent
A {\em $0$-step fibering of $X$ over $C$\/} is a pair $(f_1; n_1)$ where $f_1: X_1\to M^{n_1}$ with $X_1=X$ is an injective definable map such that  $f_1(X_1)\subseteq C^{n_1}$. 

\medskip\noindent
A {\em $1$-step fibering of $X$ over $C$\/} is a tuple $\big(f_1, f_2; m_1,n_1, n_2\big)$
where $$f_1\ :\  X_1 \to  M^{n_1}, \qquad f_2\ :\  X_2\to M^{n_2}$$ 
with $X_1=X\subseteq M^m$ and $X_2\subseteq M^{m_1}\times M^{m+n_1}$ are definable maps such that for all $y\in f_1(X_1)$, some $x\in M^{m_1}$ yields a $0$-step fibering $\big(f_2(x,-); n_2\big)$
of the subset $f_1^{-1}(y)\times f_1(X_1)$ of $M^{m+n_1}$ over $C$. (Note: the $x\in M^{m_1}$ serve here as parameters for the definable family of witnesses for the sets $f_1^{-1}(y)\times f_1(X_1)$, and instead of the ambient space $M^m$ for $X$ we have the ambient space $M^{m+n_1}$ for these sets.) 

\medskip\noindent
A {\em $2$-step fibering of $X$ over $C$\/} is a tuple $\big(f_1, f_2, f_3; m_1, m_2, n_1, n_2, n_3\big)$ where
$$f_1: X_1\to  M^{n_1},\quad f_2: X_2\to M^{n_2},\quad 
f_3:  X_3 \to M^{n_3} $$
with $X_1=X\subseteq M^m$,  $X_2\subseteq M^{m_1}\times M^{m+n_1}$, $X_3\subseteq M^{m_1}\times M^{m_2}\times M^{m+n_1+n_2}$ 
are definable maps such that for all $y\in f_1(X_1)$, some $x\in M^{m_1}$ yields a $1$-step fibering
$$\big(f_2(x,-), f_3(x,-); m_2, n_2, n_3\big)$$ of the set
$f_1^{-1}(y)\times f_1(X_1)\subseteq M^{m+n_1}$ over $C$.

\medskip\noindent
For any $r\ge 1$, an
{\em $r$-step fibering of $X$ over $C$\/} is a tuple $$\big(f_1,\dots, f_r, f_{r+1};  m_1,\dots, m_r, n_1,\dots, n_{r+1}\big)$$  where for $j=1,\dots,r+1$:
$f_j: X_j\to M^{n_j}$ is a definable map with definable domain $X_j\subseteq  M^{m_1}\times \cdots \times M^{m_{j-1}}\times 
M^{m+n_1+\cdots + n_{j-1}}$
 such that $X_{1}=X$ (so $f_1: X\to M^{n_1}$), and for all $y\in f_1(X_1)$, some $x\in M^{m_1}$ yields a tuple
 $$\big(f_2(x,-),\dots, f_{r+1}(x,-); m_2,\dots, m_r, n_2,\dots, n_{r+1} \big)$$
 that is an $(r-1)$-step fibering of $f_{1}^{-1}(y)\times f_1(X_1)\subseteq M^{m+n_1}$ over $C$. Note that for $x\in M^{m_1}$
 and $j=2,\dots, r+1$ the domain of $f_j(x,-)$ is the set 
 $$X_j(x)\ \subseteq\ M^{m_2}\times \cdots\times M^{m_{j-1}}\times M^{(m +n_1)+(n_2+\cdots + n_{j-1})}.$$

 \medskip\noindent
{\em Uniform definability}.  
Fix $k, m, m_1,\dots, m_r, n_1,\dots, n_{r+1}$. For $j=1, \dots, r+1$, let a $0$-definable set
$\Sigma_j\subseteq M^k\times M^{m_1}\times\cdots\times M^{m_{j-1}}\times M^{m+n_1+\cdots+ n_{j-1}}$ and a $0$-definable map
$f_j: \Sigma_j\to M^{n_j}$ be given. Then the set $P$ of $p\in M^k$ such that
$$\big(f_1(p,-),\dots, f_{r+1}(p,-); m_1,\dots, m_r, n_1,\dots, n_{r+1}\big)$$
is an $r$-step fibering of the domain $\Sigma_1(p)\subseteq M^m$ of $f_1(p,-)$ over $C$ is $0$-definable, and $P$ has
 a defining $\cal{L}$-formula that depends only on 
 $(k, m,m_1,\dots, m_r, n_1,\dots, n_{r+1})$ and given defining $\cal{L}$-formulas for $C$ and $f_1,\dots, f_{r+1}$, not on $\cM$. 
 
 \medskip\noindent
 {\em Restricting a fibering}. Let $\big(\vec{f}; \vec{m}, \vec{n}\big)$ with
$$\vec{f}=(f_1,\dots, f_{r+1}),\quad \vec{m}=(m_1,\dots, m_r),\  \vec{n}=(n_1,\dots, n_{r+1})$$
 be an $r$-step fibering of $X\subseteq M^m$ over $C$. Let $X'\subseteq X$ be definable.  We define by recursion on $r$  {\em the restriction of
$\big(\vec{f}; \vec{m}, \vec{n}\big)$ to $X'$}. It will be an $r$-step fibering 
$$\big(\vec{f}'; \vec{m}, \vec{n}\big),\qquad \vec{f}'=(f'_1,\dots, f_{r+1}')$$ of $X'$ over $C$ such that
for $j=1,\dots, r+1$ the domain of $f'_j$ is a subset of the domain of $f_j$ and $f_j'$ is the corresponding restriction of $f_j$. For $r=0$ the restriction of $(f_1;n_1)$ to $X'$ is defined to be $(f_1|_{X'}; n_1)$.

Let $r\ge 1$. Then we set $f_1'=f_1|_{X'}$ and for all $x\in M^{m_1}$, if for some (necessarily unique) $y\in f_1(X')$ 
$$\big(f_2(x,-),\dots, f_{r+1}(x,-);m_2,\dots, m_{r}, n_2,\dots, n_{r+1}\big)$$
is an $(r-1)$-step fibering of $f_1^{-1}(y)\times f_1(X_1)$ over $C$, then we require 
$$\big(f'_2(x,-),\dots, f'_{r+1}(x,-);m_2,\dots, m_{r}, n_2,\dots, n_{r+1}\big)$$
to be the restriction of this fibering to $\big(f_1^{-1}(y)\cap X'\big)\times f_1(X')$, and if there is no such $y$, then the domains of
$ f'_2(x,-),\dots, f'_{r+1}(x,-)$ are empty. 

 \medskip\noindent
{\em Combining several $r$-step fiberings into one}. Fix distinct elements $0,1\in C$ (not to be confused with $0,1\in A$).  Suppose 
$$(f_1,\dots, f_r, f_{r+1}; m_1,\dots,m_{r}, n_1,\dots, n_r,n_{r+1})$$ with $f_j: X_j\to M^{n_j}$ ($j=1,\dots,r+1$) is an $r$-step fibering
of $X$ over $C$. This remains true when keeping $m$ and $X\subseteq M^m$ fixed but increasing any of $m_1,\dots, m_{r}, n_1,\dots, n_{r+1}$, while construing
$M^{m+n_1+\cdots + n_{j-1}}$ as $$M^m\times M^{n_1}\times \cdots \times M^{n_{j-1}},$$ and identifying any $M^{\mu}$, $\mu\in \{ m_1,\dots, m_r, n_1,\dots,n_{r+1}\}$,
with a subset of $M^{\mu+d}$ via $x\mapsto (x,\epsilon_1,\dots, \epsilon_d)$, for any $d$ and  $\epsilon_1,\dots, \epsilon_d\in \{0,1\}\subseteq C$. 

Next, let $X^0, X^1\subseteq M^m$ be definable and for $l=0,1$, let 
$$\big(f^l_1,\dots, f^l_{r+1};\vec{m}^l, \vec{n}^l\big),\quad \vec{m}^l=(m^l_1,\dots, m^l_r),\ \vec{n}^l=(n^l_1,\dots, n^l_{r+1})$$
be an $r$-step fibering of $X^l$ over $C$. Then we obtain an $r$-step fibering 
$$\big(f_1,\dots, f_{r+1};\vec{m}, \vec{n}\big), \qquad \vec{m}=(m_1,\dots, m_r),\  \vec{n}=(n_1,\dots, n_{r+1})$$
of $X:=X^0\cup X^1$ over $C$ as follows.

First, replacing $X^1$ by $X^1\setminus X^0$ and $\big(f^1_1,\dots, f^1_{r+1};\vec{m}^1, \vec{n}^1\big)$ by its restriction to $X^1\setminus X^0$ we arrange that $X^0$ and $X^1$ are disjoint. 

Secondly,  increasing any $m^l_j$ as indicated above, we arrange $\vec{m}^0=\vec{m}^1$.  
For $2\le j\le r$, set $m_j:= m^0_j=m^1_j$, and if $r\ge 1$, set
$m_1:=1+m^0_1=1+m^1_1$. 

By increasing the $n^l_j$ we also arrange $\vec{n}^0=\vec{n}^1$, and set $\vec{n}:=\vec{n}^0=\vec{n}^1$. Using suitable identifications of $M^{n^0_1}$ and $M^{n^1_1}$ with subsets of $M^{n_1}$
we arrange also that the sets $f^0_1(X^0)$ and $f^1_1(X^1)$ (subsets of $M^{n_1}$) are disjoint. 
Take $f_1$ such that its graph is the union of the graphs of $f^0_1$ and $f^1_1$. For $r=0$ this yields the $0$-step fibering 
$(f_1;n_1)$ of $X$ over $C$. Assume $r\ge 1$ and for $j=2,\dots, r+1$, let $f^0_j$ and $f^1_j$ have domain 
$X^0_j\subseteq M^{m_1-1}\times \cdots$ and $ X^1_j\subseteq M^{m_1-1}\times\cdots$, and define $f_j$ to have
domain $\big(\{0\}\times X^0_j\big)\cup \big(\{1\}\times X^1_j\big)\subseteq M^{m_1}\times \cdots$, with
$f_j(0,x,-)=f^0_j(x,-)$ and $f_j(1,x,-)=f^1_j(x,-)$ for $x\in M^{m_1-1}$.  Then $\big(f_1,\dots, f_{r+1};\vec{m}, \vec{n}\big)$
is an $r$-step fibering of $X$ over $C$, as promised. 
 
  Thus, given $N\in \N^{\ge 1}$, definable $X^1,\dots, X^N\subseteq M^m$, and for each of $X^1,\dots, X^N$ an $r$-step fibering over $C$ 
we can combine these fiberings into a single $r$-step fibering of $X^1\cup\cdots\cup X^N$ over $C$.  This device will be used in the proof of the next lemma.


\medskip\noindent

\begin{lemma}~\label{l5} Assume $\cM$ is $\omega$-saturated. Let  $X\subseteq M^m$ be definable. Then
$$ X \text{ has an $r$-step fibering over $C$}\ \Longleftrightarrow\ X\in \cC_r.$$
\end{lemma}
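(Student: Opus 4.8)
The plan is to prove both implications by induction on $r$, with the substantive content in the inductive step. For the base case $r=0$, a $0$-step fibering of $X$ is by definition an injective definable map $X\to C^{n_1}$, which is the same as a definable bijection of $X$ onto a definable subset of some $C^n$; this says exactly that $X\in\Def(C)^{\iso}=\cC_0$. So assume $r\ge 1$ and that the equivalence holds for $r-1$; the case $X=\emptyset$ being trivial, assume $X\ne\emptyset$.

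For the forward direction, let $(\vec f;\vec m,\vec n)$ be an $r$-step fibering of $X$, put $Y:=f_1(X)$, and regard $f_1$ as a map $X\to Y$. For each $y\in Y$ the defining clause provides an $(r-1)$-step fibering of $f_1^{-1}(y)\times Y$, so by the inductive hypothesis $f_1^{-1}(y)\times Y\in\cC_{r-1}$. Fixing a point in one factor realizes the other factor as a definable subset of this product that is $\Def(\cM)$-isomorphic, via a coordinate projection, to that factor; since $\cC_{r-1}$ is closed under definable subsets by (b) and $\cC_{r-1}^{\iso}=\cC_{r-1}$, we get both $Y\in\cC_{r-1}$ and $f_1^{-1}(y)\in\cC_{r-1}$ for all $y\in Y$. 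Hence $f_1\colon X\to Y$ is a $\cC_r$-witness and $X\in\cC_r$. This direction uses no saturation.

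For the converse, let $X\in\cC_r$ and take a $\cC_r$-witness $f\colon X\to Y$; after shrinking $Y$ to $f(X)$ (still in $\cC_{r-1}$ by (b)) assume $Y=f(X)$. By (c) each $f^{-1}(y)\times Y$ lies in $\cC_{r-1}$, so by the inductive hypothesis each has an $(r-1)$-step fibering; the problem is to choose these uniformly in $y$. Naming finitely many parameters (which preserves $\omega$-saturation) we may take $X$, $f$, $C$ to be $0$-definable. Since every definable map has the form $g(x,-)$ for a $0$-definable family $g$ and a parameter $x$, every $(r-1)$-step fibering of $f^{-1}(y)\times Y$ is furnished, at some parameter, by one of the \emph{templates}: a shape together with $0$-definable families of maps. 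By the uniform definability statement, for each template $t$ the set $Y_t$ of those $y\in Y$ for which $t$ furnishes at some parameter an $(r-1)$-step fibering of $f^{-1}(y)\times Y$ is $0$-definable, and $\bigcup_t Y_t=Y$. The partial type $\{y\in Y\}\cup\{\,y\notin Y_t\,\}_t$ over $\emptyset$ is thus unrealized, so by $\omega$-saturation it is not finitely satisfiable, giving finitely many templates with $Y=Y_{t_1}\cup\cdots\cup Y_{t_s}$.

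It remains to assemble a single fibering. Refine the cover to a partition $Y=Y_1'\sqcup\cdots\sqcup Y_s'$ with $Y_i'\subseteq Y_{t_i}$. Using the increasing-parameters device, pad the shapes of $t_1,\dots,t_s$ to a common shape, and, as in the combining device, reserve extra coordinates of $M^{m_1}$ to record an index $i$ via elements of $\{0,1\}\subseteq C$; define $f_2,\dots,f_{r+1}$ as the disjoint combination of the template families, so that at an index-$i$ parameter they reproduce the maps of $t_i$. Taking $f_1=f$, one checks that for every $y\in Y=f_1(X_1)$, say $y\in Y_i'$, an index-$i$ parameter witnessing $y\in Y_{t_i}$ yields an $(r-1)$-step fibering of $f_1^{-1}(y)\times f_1(X_1)$, so $(f_1,\dots,f_{r+1};\vec m,\vec n)$ is an $r$-step fibering of $X$. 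The crux, and the only place $\omega$-saturation enters, is the uniformization in this converse direction: passing from the pointwise existence of inner fiberings to a single definable family of them via the compactness/finite-cover argument. This is precisely the reason the explicit parametrized notion of fibering is introduced, as it is what lets these finitely many fiberings be merged into one.
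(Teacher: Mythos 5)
Your proposal is correct and follows essentially the same route as the paper: the forward direction by an easy induction using closure of $\cC_{r-1}$ under definable subsets, products, and isomorphism; the converse by naming parameters, invoking uniform ($0$-)definability of "being an $(r-1)$-step fibering" within a $0$-definable family, using $\omega$-saturation (compactness on an unrealized partial type) to extract finitely many parametrized families covering $Y=f(X)$, and then merging them into one fibering via the padding/indexing device with $\{0,1\}\subseteq C$. The only cosmetic differences are your explicit "template" terminology and the refinement of the cover into a partition, which the paper does not need but which is harmless.
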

\begin{proof} An obvious induction on $r$ yields $\Rightarrow$.  
For $\Leftarrow$ we also use induction on $r$. The case $r=0$ is trivial. Let $r\ge 1$, $X\in \cC_{r}$, and take a definable 
$f: X\to M^n$ such that $f(X)\in \cC_{r-1}$, and $f^{-1}(y)\in \cC_{r-1}$ for all $y\in f(X)$.  Augmenting $\cal{L}$ by names for finitely many elements of $M$ and expanding $\cM$ accordingly we arrange that $f$ and $X$ are $0$-definable. Assume inductively that for all $y\in f(X)$ the set $f^{-1}(y)\times f(X)\subseteq M^{m+n}$ has an $(r-1)$-step fibering over $C$. In other words, for every $y\in f(X)$ there is a tuple 
$$\big(f_2,\dots, f_{r+1}; m_2,\dots, m_r, n_2,\dots, n_{r+1}\big)$$
such that for $j=2,\dots, r+1$: $f_j$ is a $0$-definable map $X_j\to M^{n_j}$, with $0$-definable
$$X_j\ \subseteq\ M^k\times M^{m_2}\times \cdots \times M^{m_{j-1}}\times M^{m+n+(n_2+\cdots+ n_{j-1})}$$
and there is a parameter $p\in M^k$ for which $X_2(p)=f^{-1}(y)\times f(X)\subseteq M^{m+n}$ and
$$\big(f_2(p,-),\dots, f_{r+1}(p,-); m_2,\dots, m_r, n_2,\dots, n_{r+1}\big)$$
is an $(r-1)$-step fibering of $f^{-1}(y)\times f(X)$ over $C$. By $\omega$-saturation and  ``uniform definability" this yields a finite set $E$ of such tuples such that 
 for all $y\in f(X)$ there is a tuple as above in $E$ and a $p\in M^k$ for which $X_2(p)=f^{-1}(y)\times f(X)$ and
$$\big(f_2(p,-),\dots, f_{r+1}(p,-); m_2,\dots, m_r, n_2,\dots, n_{r+1}\big)$$
is an $(r-1)$-step fibering of $f^{-1}(y)\times f(X)$ over $C$.  Using the earlier remark about combining fiberings
we obtain from the  tuples in $E$ 
a single such tuple 
$$\big(f_2,\dots, f_{r+1}; m_2,\dots, m_r, n_2,\dots, n_{r+1}\big)$$
(not necessarily in $E$) that works for all $y\in f(X)$.   Then  the tuple
$$\big(f, f_2,\dots, f_{r+1}; k, m_2,\dots, m_r, n, n_2,\dots, n_{r+1}\big)$$
is an $r$-step fibering of $X$ over $C$.
\end{proof} 

\noindent
Let $X\subseteq M^m$ be definable. We declare $X$ to be $r$-step fiberable over $C$ if it has an $r$-step fibering over $C$.  For $\omega$-saturated $\cM$ this agrees with the previously defined concept, in view of Lemma~\ref{l5}.  By``uniform definability" we have for any elementary extension
$\cM^*=(M^*,\dots)$ of $\cM$: $X$ is $r$-step fiberable over $C$ with respect to $\cM$ as ambient structure iff $X^*$ is $r$-step fiberable over $C^*$ with respect to $\cM^*$ as ambient structure.

\medskip\noindent
We return to the situation of the previous subsection: $\cM$ is $\omega$-saturated, $C$ is stably embedded, and (IA1) and (IA2) hold for a given $r$.  By Proposition~\ref{prop+}, (IA1) is inherited in going from $r$ to $r+1$. By the next result this is also the case for (IA2).

\begin{cor} Let $\cal{S}\subseteq M^k\times M^m$ be definable such that $\cal{S}(p)\in \cC_{r+1}$ for all $p\in M^k$. Then the set
$\{\mu\big(\cal{S}(p)\big):\ p\in M^k\}$ is finite, and for every $a$ the set $\{p\in M^k:\ \mu\big(\cal{S}(p)\big)=a\}$ is definable . 
\end{cor}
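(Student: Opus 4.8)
The plan is to prove this corollary, which is exactly condition (IA2) with $r+1$ in place of $r$, by mirroring the proof of Lemma~\ref{lind} (the case $r=0$) one level higher, using the fibering machinery together with the bookkeeping from Lemma~\ref{l2+} and the inductive hypothesis (IA2) for $r$. As in Lemma~\ref{lind}, I would first augment $\cal{L}$ by names for finitely many elements of $M$ so as to arrange that $\cal{S}$ is $0$-definable. Since each $\cal{S}(p)\in \cC_{r+1}$, Lemma~\ref{l5} gives for each $p\in M^k$ an $(r+1)$-step fibering of $\cal{S}(p)$ over $C$; by the ``uniform definability'' of the fibering condition, for each choice of numerical data and of $0$-definable maps the set of parameters admitting the corresponding fibering is $0$-definable. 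Applying $\omega$-saturation exactly as in Lemma~\ref{lind} then yields a finite set $E$ of such fibering ``schemes'' such that every $p$ is covered by some $\tau\in E$: that is, for each $\tau$ there is a $0$-definable $Q_\tau\subseteq M^{d_\tau}\times M^k$ whose projection to the $p$-coordinate is a definable set $P_\tau$ with $\bigcup_{\tau\in E}P_\tau=M^k$, and for $(x,p)\in Q_\tau$ the first map $f_{x,p}:=F^\tau_1(x,p,-)$ of the scheme is a $\cC_{r+1}$-witness with domain $\cal{S}(p)$ and codomain $M^{n_1}$, with $n_1=n_1(\tau)$ fixed once $\tau$ is fixed.

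Next I would fix one $\tau\in E$ and compute $\mu\big(\cal{S}(p)\big)$ uniformly over $(x,p)\in Q_\tau$, adapting the argument of Lemma~\ref{l2+}. By the definition of $\mu$ on $\cC_{r+1}$ we have $\mu\big(\cal{S}(p)\big)=\mu_{f_{x,p}}\big(\cal{S}(p)\big)=\sum_a a\,\mu\big(R_a(x,p)\big)$, where $R_a(x,p):=\{y\in f_{x,p}(\cal{S}(p)):\ \mu\big(f_{x,p}^{-1}(y)\big)=a\}$. Since $f_{x,p}$ is a $\cC_{r+1}$-witness, the fibers $f_{x,p}^{-1}(y)$ lie in $\cC_r$ and form a definable family indexed by $(x,p,y)$, so (IA2) for $r$ shows $\mu\big(f_{x,p}^{-1}(y)\big)$ takes only finitely many values and $\{(x,p,y):\ \mu\big(f_{x,p}^{-1}(y)\big)=a\}$ is definable; hence only finitely many $a$ are relevant and each $R_a(x,p)$ is a definable subset of $f_{x,p}(\cal{S}(p))\in \cC_r$, so $R_a(x,p)\in \cC_r$ and the $R_a(x,p)$ form a definable family over $(x,p)$. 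Applying (IA2) for $r$ once more to this family shows each $\mu\big(R_a(x,p)\big)$ takes finitely many values with definable level sets in $(x,p)$. As the sum $\sum_a a\,\mu\big(R_a(x,p)\big)$ runs over the finitely many relevant $a$, it follows that $\mu\big(\cal{S}(p)\big)$ takes finitely many values over $Q_\tau$ and that $\{(x,p)\in Q_\tau:\ \mu\big(\cal{S}(p)\big)=c\}$ is definable for each $c$.

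Finally I would patch over $E$. Because $\mu\big(\cal{S}(p)\big)$ depends only on $\cal{S}(p)$ and not on the auxiliary parameter $x$, projecting out $x$ gives that $\{p\in P_\tau:\ \mu\big(\cal{S}(p)\big)=c\}$ is definable, and taking the (finite) union over $\tau\in E$ yields that $\{p\in M^k:\ \mu\big(\cal{S}(p)\big)=c\}$ is definable and that $\{\mu\big(\cal{S}(p)\big):\ p\in M^k\}$ is finite. The conceptual content, and the main obstacle, is the reduction to the finite set $E$ of fibering schemes: everything downstream is routine Lemma~\ref{l2+}-style bookkeeping, but it relies on packaging the per-$p$ witnesses $f_{x,p}$ into finitely many genuinely definable families with fixed codomain dimension $n_1$, which is precisely what uniform definability and $\omega$-saturation provide.
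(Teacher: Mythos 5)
Your proposal is correct and follows essentially the same route as the paper's own proof: invoke Lemma~\ref{l5} for each $\cal{S}(p)$, use uniform definability plus $\omega$-saturation to extract a finite set $E$ of fibering schemes, apply the Lemma~\ref{l2+} machinery (which you re-derive inline via two applications of (IA2)) to the definable family of first-stage maps for each $\tau\in E$, and take the finite union over $E$. The only cosmetic difference is that the paper cites Lemma~\ref{l2+} directly where you unwind its argument, and it makes the patching step over $E$ implicit where you spell it out.
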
 
\begin{proof} For every $p\in M^k$ the set $\cal{S}(p)$ has by Lemma~\ref{l5} an $(r+1)$-step fibering over $C$. As in the proof of Lemma~\ref{l5}, this yields a finite set $E$ of tuples 
$$\big( f_1,\dots, f_{r+1}, f_{r+2}; m_1,\dots,m_{r+1}, n_1,\dots, n_{r+1},n_{r+2}\big)$$
where for $j=1,\dots, r+2$: $f_j$ is a definable map $ \Sigma_j\to M^{n_i}$ with (definable) domain 
$\Sigma_j\subseteq M^l\times M^{m_1}\times\cdots\times M^{m_{j-1}}\times M^{m+n_1+\cdots + n_{j-1}}$, and for each $p\in M^k$ there is 
a tuple in $E$ as displayed and $q\in M^{l}$
such that $\cal{S}(p)=\Sigma_1(q)$ and the tuple
$$ \big(f_1(q,-),\dots, f_{r+1}(q,-), f_{r+2}(q,-); m_1,\dots, m_{r+1}, n_1,\dots, n_{r+1}, n_{r+2}\big)$$
is an $(r+1)$-step fibering of the domain $\Sigma_1(q)\subseteq M^m$ of $f_1(q,-)$ over $C$.  

Consider one such tuple $\tau\in E$ as displayed above. Let $Q$ be the definable
set of $q\in M^{l}$ such that
for some $p\in M^k$ we have $\cal{S}(p)=\Sigma_1(q)$ and 
$$ \big(f_1(q,-),\dots, f_{r+1}(q,-), f_{r+2}(q,-); m_1,\dots, m_{r+1}, n_1,\dots, n_{r+1},n_{r+2}\big)$$
is an $(r+1)$-step fibering of the domain $\Sigma_1(q)\subseteq M^m$ of $f_1(q,-)$ over $C$.
Each $q\in Q$ yields a map
$f_1(q,-): \Sigma_1(q)\to M^{n_1}$ such that its preimages of points $y\in M^{n_1}$ are in $\cC_r$. The maps $f_1(q,-)$ with $q\in Q$ make up a definable family $\cal{F}_\tau$ of maps from subsets of $M^m$ into $M^{n_1}$ as described just before Lemma~\ref{l2+}. That lemma then tells us that $\{\mu\big(\Sigma_1(q)\big): q\in Q\}$ is finite, and that
$\{q\in Q:\ \mu\big(\Sigma_1(q)\big)=a\}$ is definable for every $a$. This holds for all $\tau\in E$, which yields the
desired result.
\end{proof} 

\noindent
This finishes the inductive proof of Theorem~\ref{thm} in the case of $\omega$-saturated 
$\cM$.

\subsection*{Reduction to the $\omega$-saturated case}  Here $C$ is stably embedded in $\cM$, but we do not assume $\cM$ is $\omega$-saturated. 

Let $\cM^*$ be an elementary extension of $\cM$. We define an $A$-valued Fubini measure $\mu^*$ on  the full subcategory 
$\Def(C^*)$ of $\Def(\cM^*)$: Let $Y\subseteq (C^*)^n$ be definable in $\cM^*$; take an $\cal{L}$-formula $\phi(x,y)$ with $x=(x_1,\dots, x_m)$, $y=(y_1,\dots, y_n)$, and a point $p_Y\in (C^*)^m$ such that $Y=\{q\in (C^*)^n: \cM^*\models \phi(p_Y,q)\}$. To simplify notation, set $\phi(p, C^n):=\{q\in C^n:\ \cM\models \phi(p,q)\}$ for $p\in C^m$, and likewise define
$\phi\big(p, (C^*)^n\big)\subseteq (C^*)^n$ for $p\in (C^*)^m$; in particular, $Y=\phi\big(p_Y, (C^*)^n\big)$. 
Now $C^m$ is the disjoint union of the definable sets $X_a:=\{p\in C^m:\ \mu\big(\phi(p,C^m)\big)=a\}$ (which is empty for all but finitely many $a$). 
Set $\mu^*(Y):=a$, where $a$ is unique such that $p_Y\in (X_a)^*$. It is routine to verify that this particular $a$ does not depend on the choice of $\phi$ and that it yields indeed a Fubini measure on $\Def(C^*)$. 

Now take such an elementary extension $\cM^*$ to be $\omega$-saturated. Then $\mu^*$ extends uniquely to an $A$-valued Fubini measure $\mu^*$ on the full subcategory $\Def(C^*)^{\f}$ of $\Def(\cM^*)$. 
For $X\in \Def(C)^{\f}$ we have $X^*\in \Def(C^*)^{\f}$; set $\mu(X):= \mu^*(X^*)$.
This extends the original $A$-valued Fubini measure $\mu$ on $\Def(C)$ to an $A$-valued Fubini measure $\mu$ on $\Def(C)^{\f}$.  As to uniqueness, suppose $X$ is fiberable over $C$, and take an $r$-step fibering $(f_1,\dots, f_r, f_{r+1}; m_1,\dots, m_{r}, n_1,\dots, n_{r+1})$
of $X$ over $C$. 
For $r=0$ the definable map $f_1: X \to C^{n_1}$ is injective, and so by Fubini
$\mu(X)$ is uniquely determined by $\mu: \Def(C)\to A$. For $r\ge 1$ the definable map $f_1: X\to M^{n_1}$ is such that
$f_1(X)$ and every
$f_1^{-1}(y)$ with $y\in f_1(X)$ is $(r-1)$-step fiberable over $C$, and we can assume inductively that the values of 
$\mu\big(f(X)\big)$ and $\mu\big(f_1^{-1}(y)\big)$ for
$y\in f_1(X)$ are uniquely determined by $\mu: \Def(C)\to A$, which again by Fubini uniquely deternines $\mu(X)$. 
This concludes the proof of Theorem~\ref{thm}.

\section{Fiberability and Co-analyzability}\label{co}

\noindent
Fiberability is a somewhat ad hoc notion, but agrees under natural model-theoretic conditions with the more intrinsic notion of co-analyzability from \cite{HHM}, as we shall see. {\it Throughout this section $X\subseteq M^m$ is definable}.  

\subsection*{Fiberability implies co-analyzability}
We recall here the recursion on $r$ that defines ``$X$ is co-analyzable in $r$ steps" (tacitly: relative to $C$ in $\cM$), where $\cM$ is assumed to be $\omega$-saturated: \begin{enumerate}
\item  $X$ is co-analyzable in $0$ steps iff $X$ is finite;
\item  $X$ is co-analyzable in $r+1$ steps iff for some definable set $R \subseteq M^m\times C$ we have: $\breve{R}(y)$ is co-analyzable in $r$ steps for every $y\in C$, and 
$X:= \bigcup_{y\in C}\breve{R}(y)$. 
\end{enumerate} 
 See \cite[p. 35]{ADH} for how this extends to not necessarily $\omega$-saturated $\cM$ in such a way that
 for any elementary extension $\cM^*$ of $\cM$: $X$ is co-analyzable in $r$ steps relative to $C$ in $\cM$ iff $X^*$ is co-analyzable in $r$ steps relative to $C^*$ in $\cM^*$.  Call $X$ {\em co-analyzable\/} if $X$ is co-analyzable in $r$ steps for some $r$. 
 
 By \cite{HHM} this notion can be characterized as follows:
 
 \begin{prop}\label{hhm} 
Let $T$ be a complete $L$-theory, where $L$ is a countable one-sorted language. Let $y$ be a single variable and $D(y)$ an $L$-formula that defines in each model $\cal{N}=(N;\cdots)$ of $T$ a set $D(N)$ with $|D(N)|\ge 2$.
Then the following conditions on an $L$-formula $\varphi(x)$ with $x=(x_1,\dots,x_m)$ are equivalent:
\begin{enumerate}
\item[\textup{(i)}] for some  $\cal{N}\models T$, $\varphi(N^m)$ is co-analyzable relative to $D(N)$;
\item[\textup{(ii)}] for all $\cal{N}\models T$, $\varphi(N^m)$ is co-analyzable relative to $D(N)$;
\item[\textup{(iii)}] for all  $\cal{N}\models T$, if 
$D(N)$ is countable, then so is $\varphi(N^m)$;
\item[\textup{(iv)}] for all $\cal{N}\preceq\cal{N}^*\models T$, if
$D(N)=D(N^*)$, then $\varphi(N^m)=\varphi((N^*)^m)$.
\end{enumerate}
\end{prop}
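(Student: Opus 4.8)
The plan is to prove the cycle $\textup{(i)}\Rightarrow\textup{(ii)}\Rightarrow\textup{(iii)}\Rightarrow\textup{(iv)}\Rightarrow\textup{(i)}$, isolating one substantive step and deriving the other three from amalgamation, cardinality bookkeeping, and a classical two-cardinal theorem. Throughout I would use the completeness of $T$ and the countability of $L$ exactly where they are needed, together with the recorded fact that ``co-analyzable in $r$ steps relative to $D$'' transfers in both directions along elementary extensions, with $r$ unchanged. For $\textup{(i)}\Rightarrow\textup{(ii)}$: any two models $\mathcal N_0,\mathcal N_1\models T$ have a common elementary extension $\mathcal N^*$ (joint embedding, obtained from completeness by applying compactness to the union of their elementary diagrams with disjoint constants); if $\varphi(N_0^m)$ is co-analyzable relative to $D(N_0)$, it transfers up to $\varphi((N^*)^m)$ relative to $D(N^*)$ and back down to $\varphi(N_1^m)$ relative to $D(N_1)$, all with the same step count. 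For $\textup{(ii)}\Rightarrow\textup{(iii)}$: an induction on the number of steps shows co-analyzability forces $|\varphi(N^m)|\le\max(|D(N)|,\aleph_0)$, since finite sets obey this bound and a $D$-indexed union of sets obeying it obeys it again; so a countable $D(N)$ forces a countable $\varphi(N^m)$.

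For $\textup{(iii)}\Rightarrow\textup{(iv)}$ I would argue contrapositively and invoke Vaught's two-cardinal theorem. The negation of $\textup{(iv)}$ furnishes a \emph{Vaughtian pair} for $(\varphi,D)$, namely $\mathcal N\prec\mathcal N^*$ with $D(N)=D(N^*)$ (the controlling set is fixed) but $\varphi(N^m)\subsetneq\varphi((N^*)^m)$ (proper growth seen in $\varphi$); passing first to a countable $\mathcal N$ by Löwenheim--Skolem, the base already has countable $D$ and $\varphi$. Since $L$ is countable, the standard $\omega_1$-chain Henkin construction upgrades such a pair to a model in which $D$ remains countable while $\varphi$ is blown up to size $\aleph_1$, and that model witnesses $\neg\textup{(iii)}$. (If $D$ is finite this step is vacuous: then co-analyzable is equivalent to finite and all four conditions are immediate, so I would dispose of that case separately and assume $D$ infinite here.)

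The remaining implication $\textup{(iv)}\Rightarrow\textup{(i)}$ is the heart, and I would prove its contrapositive $\neg\textup{(i)}\Rightarrow\neg\textup{(iv)}$: assuming $\varphi(N^m)$ is not co-analyzable relative to $D(N)$ in \emph{any} model $\mathcal N\models T$, I must produce a Vaughtian pair for $(\varphi,D)$. In monster-model form the goal is a countable base $\mathcal N_0$ together with an element of $\varphi$ realized in a proper elementary extension that adjoins nothing to $D$. The main obstacle is precisely the construction of this element out of the failure of the recursion: because the covering of $\varphi$ by $D$-indexed families of lower co-analyzability rank fails at \emph{every} finite level, one wants a tree/omitting-types argument in the monster that, level by level, exploits non-co-analyzability to keep the new $\varphi$-element ``independent of $D$'' while simultaneously guaranteeing that adjoining it does not enlarge $D$. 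This is the substantive input I would draw from \cite{HHM}; the same difficulty seen from the other side is that ``no Vaughtian pair'' must force every element of $\varphi$ (in every elementary extension fixing $D$) to be generated from $D$ over the base, and then a compactness-and-uniformity argument must assemble these generations into the recursive $D$-indexed covering that defines co-analyzability. Either way, the transfer of a purely cardinal-combinatorial hypothesis into the recursive, definable covering data is where the real work lies; once it is in hand the cycle closes and $\textup{(i)}$--$\textup{(iv)}$ are equivalent.
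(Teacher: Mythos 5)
The first thing to note is that the paper does not actually prove Proposition~\ref{hhm}: it is quoted as a known result, with the citation of \cite{HHM} serving as the proof, followed only by a remark on how to drop the countability of $L$ from the equivalences (i)$\Leftrightarrow$(ii)$\Leftrightarrow$(iv). Your proposal is therefore in the same position as the paper on the one implication that matters. Your arguments for (i)$\Rightarrow$(ii) (joint embedding for a complete theory plus elementary invariance of co-analyzability, which the paper records via \cite[p.~35]{ADH}), and for (iii)$\Rightarrow$(iv) (a Vaughtian pair plus the $\omega_1$-chain construction behind Vaught's two-cardinal theorem, which is indeed exactly where countability of $L$ enters) are essentially sound. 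But for (iv)$\Rightarrow$(i), which you yourself identify as ``the heart,'' you give no argument and explicitly defer to \cite{HHM}: converting ``no growth of $\varphi$ in elementary extensions fixing $D$'' into the recursive $D$-indexed covering data is the entire mathematical content of the proposition, and describing the difficulty is not the same as resolving it. So as a self-contained proof the proposal has a genuine gap, even though that gap coincides precisely with what the paper itself outsources to its reference.

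There is also a smaller wrinkle in what you do prove. Your (ii)$\Rightarrow$(iii) induction takes for granted that in a model $\cal{N}$ with $D(N)$ countable, co-analyzability of $\varphi(N^m)$ yields a definable $R\subseteq N^m\times D(N)$ covering $\varphi(N^m)$ by fibers of lower rank \emph{inside $\cal{N}$ itself}. But the recursion defining co-analyzability is stated only for $\omega$-saturated ambient structures, and its extension to arbitrary models (via \cite[p.~35]{ADH}) is arranged through $\omega$-saturated elementary extensions; a model with countably infinite $D(N)$ need not be $\omega$-saturated, so a priori the covering lives in an extension whose $D$-part is much larger, and your cardinality bound says nothing there. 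One repair is a uniform-definability/compactness argument showing the covering data descends to $\cal{N}$; a simpler one is to reroute: prove (ii)$\Rightarrow$(iv) directly (by induction on $r$, working with $\omega$-saturated pairs and using that $D(N)=D(N^*)$ forces each fiber $\breve{R}(y)$, $y\in D(N)$, not to grow), and then deduce (iii) from (iv) by downward L\"owenheim--Skolem, taking a countable $\cal{N}_0\preceq\cal{N}$ with $D(N)\subseteq N_0$, so that $D(N_0)=D(N)$ and hence $\varphi(N_0^m)=\varphi(N^m)$ is countable. This reordering also matches the paper's remark that (iv) can be phrased in the theory of elementary pairs, with both models taken $\omega$-saturated.
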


\noindent
The equivalences (i)$\Leftrightarrow$(ii)$\Leftrightarrow$(iv) hold without assuming that $L$ is countable:
For (i)$\Leftrightarrow$(ii)$\Rightarrow$(iv), reduce to countable $L$ by taking suitable reducts. For (iv)$\Rightarrow$(ii), use also that (iv) amounts to $T_2\models \sigma$ where $T_2$ is the theory of elementary pairs of models of $T$ and $\sigma$ is a certain sentence, depending on $\varphi$, in the language of $T_2$. Note that in addition we can restrict $\cal{N}$ and $\cal{N}^*$ in (iv) to be  $\omega$-saturated.

\begin{lemma}\label{fico} If $X$ is fiberable over $C$, then  $X$ is co-analyzable relative to $C$. 
\end{lemma}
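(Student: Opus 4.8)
The plan is to bypass any explicit construction of $C$-indexed covers and instead invoke Proposition~\ref{hhm}. After naming as constants the finitely many parameters occurring in a defining formula for $X$ and in a fibering of $X$ over $C$, we may assume $X$ and $C$ are $0$-definable; since naming finitely many constants does not change which sets are definable-with-parameters, co-analyzability relative to $C$ is unaffected, so it suffices to establish it for this expansion. Put $T:=\Th(\cM)$, let $\varphi(x)$ define $X$ and $D(y)$ define $C$. By the remark following Proposition~\ref{hhm} it is enough to verify condition (iv) — and we may restrict the elementary pairs there to $\omega$-saturated ones, with no countability assumption on $\cal{L}$ — to conclude that $X$ is co-analyzable. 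Thus the goal reduces to the following \emph{no-new-points} statement, to be proved for all $r$ by induction: for every pair $\cal{N}\preceq\cal{N}^*$ of $\omega$-saturated models of $T$ with $D(N)=D(N^*)$, and every definable $Z\subseteq N^m$ that is $r$-step fiberable over $D(N)$, one has $Z^*=Z$, where $Z^*\subseteq (N^*)^m$ is defined by the same formula-with-parameters. Applying this with $Z=\varphi(N^m)$ — which is $r$-step fiberable in each $\cal{N}\models T$, since the named fibering transfers by ``uniform definability'' and, in $\omega$-saturated $\cal{N}$, matches the definition via Lemma~\ref{l5} — yields condition (iv), hence the lemma.

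For the base case $r=0$, fix an injective definable $f\colon Z\to N^n$ with $f(Z)\subseteq D(N)^n$. Since $f(Z)$ is a definable subset of $D(N)^n$ and $D(N)=D(N^*)$, elementarity forces $f(Z)^*=f(Z)$: every point of $f(Z)^*$ lies in $D(N^*)^n=D(N)^n\subseteq N^n$ and so already satisfies the defining formula in $\cal{N}$. Given $z^*\in Z^*$ we have $f^*(z^*)\in f(Z)^*=f(Z)$, so $f^*(z^*)=f(z)=f^*(z)$ for some $z\in Z$; injectivity of $f^*$ (a first-order consequence of that of $f$) then gives $z^*=z\in Z$, whence $Z^*=Z$.

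For the inductive step $r\ge 1$, take a witness $f\colon Z\to N^n$ such that the image $Y:=f(Z)$ and every fiber $f^{-1}(y)$ with $y\in Y$ is $(r-1)$-step fiberable over $D(N)$ (such $f$ exists by the $\omega$-saturated definition of $r$-step fiberable). The induction hypothesis applied to $Y$ gives $Y^*=Y$, and applied to each $f^{-1}(y)$ — a parameter-definable set with $y\in Y\subseteq N^n$ — gives $(f^*)^{-1}(y)=f^{-1}(y)$. Now let $z^*\in Z^*$; then $f^*(z^*)\in f^*(Z^*)=Y^*=Y$, so $y:=f^*(z^*)\in Y\subseteq N^n$ is an old point, whence $z^*\in (f^*)^{-1}(y)=f^{-1}(y)\subseteq Z$. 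Thus $Z^*=Z$, completing the induction.

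The delicate points here are organizational rather than computational: one must know that fiberability of $X$, and of the auxiliary image $Y$ and fibers $f^{-1}(y)$, is available in the relevant models — which is exactly what ``uniform definability'' together with Lemma~\ref{l5} supplies in $\omega$-saturated structures — and one must apply the induction hypothesis to the \emph{parameter}-definable fibers $f^{-1}(y)$, which is legitimate precisely because the inducted statement is quantified over all parameter-definable $r$-step fiberable sets, not merely over $0$-definable ones. The substantive content is the elementary-extension argument, and its genuine crux is the base case, where the hypothesis $D(N)=D(N^*)$ is what prevents $f(Z)$, and therefore $Z$, from acquiring new points.
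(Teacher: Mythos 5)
Your proof is correct, and while it rests on the same external input as the paper's --- Proposition~\ref{hhm} and its elementary-pair condition (iv), restricted to $\omega$-saturated pairs with no countability hypothesis --- the induction is organized genuinely differently. The paper inducts on the lemma itself (``$r$-step fiberable $\Rightarrow$ co-analyzable''): its base case exhibits a co-analysis directly (an injective definable map into $C^n$ makes $X$ co-analyzable in $n$ steps, by induction on $n$), and its inductive step uses the equivalence of Proposition~\ref{hhm} in \emph{both} directions --- (iv)$\Rightarrow$(ii) to conclude co-analyzability of $X$, and (ii)$\Rightarrow$(iv), applied through the inductive hypothesis, to know that the image $Y$ and the fibers $f^{-1}(y)$, being co-analyzable, acquire no new points in the pair $\cal{N}\preceq\cal{N}^*$. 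You instead induct on the no-new-points property itself, quantified over all parameter-definable $r$-step fiberable sets in $\omega$-saturated pairs, and invoke Proposition~\ref{hhm} exactly once, at the end, in the single direction (iv)$\Rightarrow$(ii). Your inductive step is then pure elementary-pair reasoning (it is, in effect, the argument the paper compresses into its final sentence ``Hence the same is true for $\varphi(N^m)$''), and your base case is likewise an elementary-pair argument rather than a construction. What your arrangement buys: the induction never touches Proposition~\ref{hhm}, the application of the hypothesis to the parameter-definable fibers is legitimate by the very form of the inducted statement (a point you rightly flag), and $\cM$ itself never needs to be $\omega$-saturated, since the named fibering transfers to every model of $T$ by uniform definability and Lemma~\ref{l5} recovers the recursive witness definition inside each $\omega$-saturated model. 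What the paper's arrangement buys: a shorter proof and a small quantitative bonus in the base case ($0$-step fiberable sets are co-analyzable in $n$ steps), whereas your proof is entirely non-constructive --- it never exhibits a co-analysis, only verifies the criterion. Both uses of the $\omega$-saturation restriction in (iv) (you restrict both $\cal{N}$ and $\cal{N}^*$, the paper only $\cal{N}$) are covered by the remark following Proposition~\ref{hhm}.
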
 
\begin{proof} 
We arrange that $\cM$ is $\omega$-saturated. 
Below we prove by induction on $r$:  if $X$ is $r$-step fiberable over $C$, then $X$ is co-analyzable relative to $C$. 
For $r=0$, if $f: X\to C^n$ is definable and injective, then $X$ is co-analyzable in $n$ steps relative to $C$, by an easy induction on $n$. Thus the claimed implication holds for $r=0$. Assume this implication holds for a certain $r$, and let
$X$ be $(r+1)$-step fiberable over $C$.
To show $X$ is co-analyzable relative to $C$ in $\cM$ we 
augment $\cal{L}$ by names for finitely many elements of $M$ to arrange that $X$ is $0$-definable, so $X=\varphi(M^m)$ with $\varphi(x)$ an $\cal{L}$-formula, $x=(x_1,\dots, x_m)$. We now apply (iv)$\Rightarrow$(ii) of Proposition~\ref{hhm} without assuming $L=\cal{L}$ is countable, with $T:=\Th(\cM)$ and $D(y)$ an $\cal{L}$-formula such that $D(M)=C$. 
So let $\cal{N}\preceq \cal{N}^*\models T$ with $D(N)=D(N^*)$; it suffices to show that then $\varphi(N^m)=\varphi\big((N^*)^m\big)$.
We also assume $\cal{N}$ is $\omega$-saturated, as we may by a remark following Proposition~\ref{hhm}. Since $\cal{M}\equiv \cal{N}$, the set $\varphi(N^m)$ is $(r+1)$-step fiberable over $C^{\cal{N}}:= D(N)$ in $\cal{N}$. Take $f: \varphi(N^m)\to Y$ where $Y\subseteq N^n$ and $f$ are definable in $\cal{N}$, such that $Y$ is $r$-step fiberable over $C^{\cal{N}}$ and for all
$y\in Y$ the fiber $f^{-1}(y)$ is $r$-step fiberable over $C^{\cal{N}}$. Thus by our inductive assumption and Proposition~\ref{hhm}, $Y$ and the fibers
$f^{-1}(y)$ for $y\in Y$ do not change in passing from $\cal{N}$ to $\cal{N}^*$. Hence the same is true for $\varphi(N^m)$, that is,
$\varphi(N^m)=\varphi\big((N^*)^m\big)$. 
\end{proof}

\subsection*{The o-minimal and strongly minimal cases}
{\em In this subsection $C$ is stably embedded in $\cM$}. 
 By $(C;\cM)$ we denote $C$ equipped with the structure induced by $\cM$: its underlying set is $C$ and its $0$-definable sets are the $Y\subseteq C^n$ that are $0$-definable in $\cM$. (The actual language for which $(C;\cM)$ is a structure in the sense of model theory doesn't matter for us.) Note that then for any $Y\subseteq C^n$: 
 $$Y \text{  is definable in }\cM\ \Longleftrightarrow\  Y \text{ is definable in }(C;\cM).$$  
 If $(C,\cM)$ is an o-minimal expansion of a divisible ordered abelian group with respect to a group operation on $C$ and a total ordering on $C$ that are definable in $\cM$,
 then by the proof of \cite[Lemma 6.3]{ADH},
$$X \text{ is fiberable over }C\ \Longleftrightarrow\ X \text{ is co-analyzable relative to }C.$$
Suppose next that $(C;\cM)$ is strongly minimal. This includes $C$ being infinite, and so we have an infinite sequence $c_0, c_1, c_2,\dots$
of distinct elements of $C$. We use this sequence to assign to any $Y\subseteq C$, definable in $(C;\cM)$, a finite subset $Y_0$:  if $Y$ is finite, then $Y_0:=Y$, and if $Y$ is cofinite, then $Y_0:=\{c_k\}$ where $k$ is minimal with $c_k\in Y$. Note that if $Y$ is nonempty, then so is $Y_0$.

For definable $\cal{S}\subseteq  M^m\times C$, take $\cal{S}_0\subseteq M^m\times C$ such that $\cal{S}_0(x)=\cal{S}(x)_0$ for $x\in M^m$, and note that $\cal{S}_0$ is also definable.

\begin{lemma}\label{cofi} Suppose $(C;\cM)$  is strongly minimal and has $\rm{EI}$.  Then:
$$ X \text{ is co-analyzable in $r$ steps relative to $C$}\ \Longrightarrow\ X \text{ is $r$-step fiberable over $C$}.$$
\end{lemma}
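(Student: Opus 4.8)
The plan is to induct on $r$, exploiting the recursive definitions of both notions. The base case $r=0$ should be nearly immediate: a set co-analyzable in $0$ steps is finite, and any finite subset $X\subseteq M^m$ is $0$-step fiberable over $C$ since a finite injection $X\to C^n$ exists (using that $C$ is infinite because $(C;\cM)$ is strongly minimal, so $|C|$ suffices to enumerate the points of $X$ via distinct elements $c_0,c_1,\dots$). For the inductive step I would assume the implication for $r$ and take $X$ co-analyzable in $r+1$ steps. By definition this gives a definable $R\subseteq M^m\times C$ with $X=\bigcup_{y\in C}\breve R(y)$ and each fiber $\breve R(y)$ co-analyzable in $r$ steps relative to $C$.

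The strategy for the step is to produce a definable map $f\colon X\to C^n$ (or more precisely into $C$ times something $r$-step fiberable) realizing $X$ as a $\cC_{r+1}$-object, i.e. to exhibit an $(r+1)$-step fibering. The natural candidate for the first coordinate of $f$ is the map sending each $x\in X$ to a canonical $y\in C$ with $x\in \breve R(y)$; but a single $x$ may lie in $\breve R(y)$ for many $y$, so the map $x\mapsto y$ is not well-defined without choosing $y$ canonically. This is exactly where strong minimality and $\mathrm{EI}$ enter: for each $x$ the set $\{y\in C:\ x\in\breve R(y)\}$ is a definable subset of $C$, hence finite or cofinite, and the construction $\cal S\mapsto \cal S_0$ (picking $Y_0=Y$ if $Y$ is finite, and the least $c_k\in Y$ if $Y$ is cofinite) selects from it a canonical \emph{nonempty finite} set in a definable way. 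I would thus first pass to the relation $R_0$ with $\breve{R_0}(x)=\{y:\ x\in\breve R(y)\}_0$ to arrange that each $x\in X$ determines a canonical nonempty finite fiber of $y$-values, and then use $\mathrm{EI}$ to code, for each $x$, this finite set by a single canonical tuple over $C$. Elimination of imaginaries is what converts ``a definable finite set of elements of $C$'' into ``a definable element of some $C^N$,'' giving a genuine definable map $\pi\colon X\to C^N$ whose fibers are controlled.

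With such $\pi\colon X\to C^N$ in hand, the fibers $\pi^{-1}(\bar c)$ should be (up to the coding) pieces of finitely many of the $\breve R(y)$, each co-analyzable in $r$ steps, and by the inductive hypothesis therefore $r$-step fiberable over $C$; meanwhile $\pi(X)\subseteq C^N$ is trivially $0$-step (hence $r$-step) fiberable. Combining $\pi$ with the fiberwise $r$-step fiberings — using the ``combining several $r$-step fiberings into one'' device and the uniform-definability machinery from Section~\ref{proof} to assemble fiberings uniformly across the parameter $\bar c$ — yields an $(r+1)$-step fibering of $X$ over $C$, which is what we want by Lemma~\ref{l5} (recall $\cM$ may be taken $\omega$-saturated here). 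I would also reduce to $0$-definable $X$ by adding names for finitely many parameters, as in the proofs of Lemma~\ref{lind} and Lemma~\ref{fico}, so that the uniform-definability statements apply cleanly.

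The main obstacle I expect is the fiber analysis: ensuring that after the canonical coding by $\pi$, each fiber $\pi^{-1}(\bar c)$ is genuinely co-analyzable in $r$ steps so that the inductive hypothesis applies. The subtlety is that $x$ lying in several $\breve R(y)$ forces us to work with the \emph{finite} sets of $y$-values, and one must check that grouping the $\breve R(y)$ according to the canonical code $\bar c$ still yields $r$-step co-analyzable pieces — a finite union of $r$-step co-analyzable sets is co-analyzable, but keeping this uniform and definable in $\bar c$ (so that the combined fibering exists) is where strong minimality (controlling the fibers as finite-or-cofinite) and $\mathrm{EI}$ (making the grouping definable and parametrized by a tuple over $C$) must be used together. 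Getting the bookkeeping of ambient dimensions and parameters to match the rigid format of an $(r+1)$-step fibering is routine but technically delicate.
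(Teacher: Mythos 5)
Your proposal is correct and follows the paper's own proof essentially step for step: induct on $r$, arrange $\cM$ to be $\omega$-saturated, shrink the witnessing relation $\cal{S}$ to $\cal{S}_0$ using strong minimality, use EI to code the resulting finite sets $\cal{S}_0(x)$ (uniformly bounded in size, via stable embeddedness/saturation) by tuples in a fixed $C^n$, and note that each fiber of the resulting map $f:X\to C^n$ is a definable subset of a finite union $\bigcup_{y\in \cal{S}_0(x)}\breve{\cal{S}}(y)$ of sets that are in $\cC_r$ by the inductive hypothesis, hence lies in $\cC_r$ by the closure properties (a), (b). The ``main obstacle'' you flag dissolves for exactly this reason: once $\cM$ is $\omega$-saturated, membership in $\cC_{r+1}$ only requires each fiber individually to be in $\cC_r$ (Lemma~\ref{l5}), so the combining-fiberings and uniform-definability machinery you invoke is not actually needed here.
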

\begin{proof} Clear for $r=0$. Assume it holds for $r$, and let $X$ be co-analyzable in $r+1$ steps relative to $C$. We also arrange $\cM$ is $\omega$-saturated. 
Take definable $\cal{S}\subseteq M^m\times C$ such that $\breve{\cal{S}}(y)$ is $r$-step fiberable
over $C$ for all $y\in C$ and $X=\breve{\cal{S}}(C)$. 
For $x\in X$ we have the nonempty definable set $\cal{S}(x)\subseteq C$, so replacing $\cal{S}$ by its subset $\cal{S}_0$ we arrange that $\cal{S}(x)$ is finite for all $x\in X$. Since $C$ is stably embedded in $\cM$ this yields $e\in \N^{\ge 1}$ such that $1\le |\cal{S}(x)|\le e$ for all $x\in X$. 
 Now $(C;\cM)$ having EI gives an encoding of nonempty finite subsets of $C$ of size $\le e$ by elements of a fixed power $C^n$,
 and hence a definable map $f: X\to C^n$ such that for all $x,x'\in X$: $\cal{S}(x)= \cal{S}(x')\Leftrightarrow f(x)= f(x')$. Thus for $x\in X$ and $f(x)=z$ we have
 $$f^{-1}(z)\ \subseteq\ \bigcup_{y\in \cal{S}(x)} \breve{\cal{S}}(y),$$
 so $f^{-1}(z)\in \cC_r$. Thus $f$ witnesses that $X\in \cC_{r+1}$.
\end{proof} 

\subsection*{Connection to internality} Fiberability of $X$ over $C$ and co-analyzability of $X$ relative to $C$ are two
ways in which $C$ can control $X$. Here are other ways: 
\begin{enumerate}
\item $X$ is internal to $C$, that is, $X=f(Y)$ for some definable $Y\subseteq C^n$ and definable $f: Y\to M^m$. (For nonempty $X$ one can take $Y=C^n$.) 
\item $X$ is almost internal to $C$, that is, for some definable $\cal{S}\subseteq C^n\times M^m$ and $e$ we have $|\cal{S}(y)|\le e$ for all $y\in C^n$ and $\cal{S}(C^n)=X$. 
\end{enumerate} 
It is easy to check that we have the following implications: 
\begin{align*} X \text{ is internal to }C\ &\Longrightarrow\ X \text{ is almost internal to }C\\
X \text{ is almost internal to }C\ &\Longrightarrow\ X\text{ is co-analyzable relative to }C.
\end{align*} 
In discussing various special cases in the next section we show these implications cannot be reversed in general. They can be reversed under certain conditions. For example, if there is a definable total ordering on $M$, then clearly:
$$X \text{ is internal to }C\ \Longleftrightarrow\  X \text{ is almost internal to }C.$$

\begin{lemma} Suppose $C$ is stably embedded in $\cM$, $(C;\cM)$ has ${\rm{EI}}$, and  $X$ is internal to $C$. Then $X\in \cC_0$.
\end{lemma}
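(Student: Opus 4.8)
The plan is to produce a definable injective map $X \to C^n$ for some $n$; by the description of $\cC_0$ this exhibits $X$ as definably isomorphic to a definable subset of $C^n$, hence $X \in \cC_0$. Write $X = f(Y)$ with $Y \subseteq C^k$ definable and $f : Y \to M^m$ definable. The fibers $f^{-1}(x)$ with $x \in X$ partition $Y$ into nonempty definable subsets of $C^k$, and the idea is to replace each $x$ by a canonical $C$-tuple coding its fiber; elimination of imaginaries in $(C;\cM)$ is precisely what makes such a coding both available and definable.

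First I would encode the fibering as an equivalence relation. Put $E := \{(y,y') \in Y\times Y : f(y)=f(y')\}$, a definable subset of $C^{2k}$, extended to a definable equivalence relation on all of $C^k$ by collapsing $C^k\setminus Y$ to one extra class. Since $E \subseteq C^{2k}$ is definable in $\cM$ and contained in a power of $C$, the equivalence ``definable in $\cM$ $\Leftrightarrow$ definable in $(C;\cM)$'' (which holds here because $C$ is stably embedded) shows $E$ is definable in $(C;\cM)$, say over a tuple $\bar c \in C^{\nu}$.

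Next I would apply EI. Since $(C;\cM)$ has EI, the $\bar c$-definable equivalence relation $E$ is the kernel of a map $g : C^k \to C^n$ definable over $\bar c$ in $(C;\cM)$, i.e. $g(y)=g(y') \Leftrightarrow E(y,y')$; restricting to $Y$ gives $g(y)=g(y') \Leftrightarrow f(y)=f(y')$ for $y,y'\in Y$. Again by the stable-embeddedness equivalence, $g$ is definable in $\cM$. As $g$ is constant on the fibers of $f$ and separates distinct fibers, it descends to a well-defined injection $h : X \to C^n$ with $h(f(y))=g(y)$; its graph $\{(x,z) : \exists y\,(f(y)=x \wedge g(y)=z)\}$ is definable, so $h$ is a definable injective map $X \to C^n$. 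Thus $X$ is definably isomorphic to the definable set $h(X) \subseteq C^n$, that is, $X \in \Def(C)^{\iso} = \cC_0$.

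The one point needing care is the application of EI: I must ensure that a parameter-definable (not merely $0$-definable) equivalence relation on $C^k$ is coded by a function definable over the same parameters $\bar c$. This follows from the standard fact that EI persists under naming constants, which reduces the task to the $0$-definable case guaranteed by the hypothesis. Note that no saturation is required; stable embeddedness enters only to move $E$ and $g$ between $\cM$ and $(C;\cM)$.
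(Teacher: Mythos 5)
Your proof is correct and takes essentially the same route as the paper's: both apply EI of $(C;\cM)$ to get a definable map $g$ that identifies points of $Y$ exactly when they lie in the same fiber of $f$, and then pass to the induced definable bijection from $X$ onto a definable subset of $C^n$. The paper's version is simply terser, leaving implicit the steps you spell out (forming the equivalence relation, transferring definability between $\cM$ and $(C;\cM)$ via stable embeddedness, and handling EI over parameters).
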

\begin{proof} Take definable $f: Y\to M^m$ with $Y\subseteq C^\nu$, $\nu\in \N$, such that $X=f(Y)$. Since $(C;\cM)$ has EI, we have a definable map $g:Y\to C^n $ such that for all $y,z\in Y$ we have $f(y)=f(z)\Leftrightarrow g(y)=g(z)$. This yields a definable bijection $$\qquad\qquad \qquad \qquad f(y)\mapsto g(y)\ :\  X\to g(Y),\qquad (y\in Y). \qquad \qquad \qquad \qquad \qedhere$$
\end{proof} 

\subsection*{Analyzability} 
The concept of {\em analyzability\/} is more popular in model theory than co-analyzability, but is usually defined for types
in the environment of a monster model, instead of definable sets. I have been told that for definable sets and $\omega$-saturated $\cM$ its recursive definition should be as follows: 

{\em $X$ is $0$-step analyzable in $C$ iff $X$ is internal to $C$, and $X$ is $(r+1)$-step analyzable in $C$ iff there is a $\Def(\cM)$-morphism $f: X \to Y$ such that $Y$ is $r$-step analyzable over $C$ and all fibers $f^{-1}(y)$, $y\in Y$, are internal to $C$. Finally, $X$ is analyzable in $C$ iff $X$ is $r$-step analyzable in $C$ for some $r$}. 

This extends to not necessarily $\omega$-saturated $\cM$ by requiring in the $(r+1)$-clause the existence of a definable 
family of maps from subsets of $C^n$ (for a fixed $n$) into $M^m$ such that for each $y\in Y$ some member of the family has image $f^{-1}(y)$. 
With this extended notion and any elementary extension $\cM^*$ of $\cM$, 
$$X \text{ is $r$-step analyzable in }C\ \Longleftrightarrow\  X^* \text{ is $r$-step analyzable in }C^*.$$ 
Using this it follows from Proposition~\ref{hhm} that if $X$ is analyzable in $C$, then $X$ is co-analyzable relative to $C$. Ben Castle and Rahim Moosa have assured me that the converse should also hold under some natural model-theoretic conditions to be satisfied in particular by differentially closed fields (and probably by $\T$) with $C$ its constant field. I have not pursued this, but it would be good to have this confirmed. One motivation for this: it might help in showing that for $\T$ and for differentially closed fields the notions of  fiberability over $C$ and co-analyzability relative to $C$ do not collapse for any $r$ to $r$-step fiberability over $C$ and co-analyzability in $r$-steps relative to $C$, respectively. This is plausible in view of \cite{J} where Jin constructs in a ``big" differentially closed field for any $r\ge 1$ a $1$-type that is $r$-step analyzable but not $(r-1)$-step analyzable (in the constant field).

\section{Application to $\T$ and to differentially closed fields} \label{special}

\subsection*{The case of $\T$} We construe here $\T$ as an ordered differential field. Its subfield $\R$ is its constant field, and
a set $Y\subseteq \R^n$ is definable in $\T$ iff $X$ is semialgebraic in the sense of $\R$. In particular, $\R$ is stably embedded in
$\T$. Thus by Theorem~\ref{thm}, the $\N_{\trop}$-valued Fubini measure $Y\mapsto \dim Y$ on $\Def(\R)$ extends uniquely to an $\N_{\trop}$-valued Fubini measure on the full subcategory $\Def(\R)^{\f}$ of $\Def(\T)$. Likewise for the $\Z$-valued Fubini measure $Y\mapsto E(Y)$ on $\Def(\R)$. 

All this holds also for any $\upo$-free, newtonian, Liouville closed $H$-field $K$ and its constant field instead of
$\T$ and its constant field $\R$; this includes all ordered differential fields $K \equiv\T$.  The extended Euler characteristic now yields an answer to a test question raised in \cite[p. 34, end of Section 5]{ADH}:

\begin{cor} With $K$ as above, let $X\subseteq K$ be definable in $K$. Then $X$ with the ordering induced by $K$
is not elementarily equivalent to the ordered set $\omega$. 
\end{cor}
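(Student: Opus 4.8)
The plan is to argue by contradiction using the extended $\Z$-valued Euler characteristic $E$ on $\Def(C)^{\f}$, where $C$ is the constant field of $K$, exploiting that $(\omega,<)$ carries a definable successor bijection onto a proper subset. So suppose $(X,<)\equiv(\omega,<)$.

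First I would record the order-theoretic consequences of the assumption. Since $(\omega,<)$ is an infinite discrete linear order with a least element and no greatest element, and each of these features is first-order, $X$ is infinite, has a least element $a_0$, and carries a successor function $s\colon X\to X$ (sending $x$ to the least element of $X$ strictly above it). This $s$ is injective with image exactly $X\setminus\{a_0\}$, because in $(\omega,<)$ every element other than the least has an immediate predecessor. As the ordering on $X$ is induced from $K$ and $X$ itself is definable in $K$, both $a_0$ and $s$ are definable in $K$; thus $s\colon X\to X\setminus\{a_0\}$ is an isomorphism in $\Def(K)$.

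The key step is to show that $X$ is fiberable over $C$, so that $X$, $X\setminus\{a_0\}$ and $\{a_0\}$ all lie in $\Def(C)^{\f}$ and $E$ is defined on them. Here I would use that $(C;K)$ is an o-minimal expansion of a divisible ordered abelian group (the real closed field $\R$ when $K=\T$, and o-minimal in the general case), so that by the equivalence recalled in Section~\ref{co} (based on Lemma~6.3 of \cite{ADH}) fiberability over $C$ coincides with co-analyzability relative to $C$. To establish co-analyzability I would invoke Proposition~\ref{hhm}: it suffices to verify that $X$ does not grow under an elementary extension of $\Th(K)$ fixing $C$ (condition (iv)), equivalently that $X$ is countable whenever the constant field is countable (condition (iii)). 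The decisive input is the structure theory of definable subsets of $K^1$: such a set either contains an interval or is co-analyzable relative to $C$. Since $(X,<)\equiv(\omega,<)$ is discretely ordered it contains no interval, and is therefore co-analyzable, hence fiberable over $C$.

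Finally I would derive the contradiction. Since $s\colon X\to X\setminus\{a_0\}$ is an isomorphism in $\Def(C)^{\f}$, the isomorphism-invariance (the motivic property) of the Fubini measure $E$ gives $E(X)=E\big(X\setminus\{a_0\}\big)$; on the other hand additivity on the disjoint union $X=\big(X\setminus\{a_0\}\big)\cup\{a_0\}$, together with $E(\{a_0\})=1$, gives $E(X)=E\big(X\setminus\{a_0\}\big)+1$. Hence $0=1$ in $\Z$, which is absurd, so $(X,<)\not\equiv(\omega,<)$. I expect the main obstacle to be precisely the fiberability step, that is, converting discreteness of the order into co-analyzability relative to $C$ via the model theory of $K$: the clean successor/Euler-characteristic contradiction only bites once $E$ is known to be defined on $X$, and not every definable subset of $K$ (for instance an interval) is fiberable over $C$, so the whole argument rests on this tameness input.
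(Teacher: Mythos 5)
Your proposal is correct and follows essentially the same route as the paper: deduce from $X\equiv\omega$ that $X$ is discrete (contains no interval), invoke the tameness results of \cite{ADH} to conclude $X$ is fiberable over the constant field, and then get the contradiction $E(X)=E(X)-1$ from the definable successor bijection $X\to X\setminus\{a_0\}$ via isomorphism-invariance and additivity of the extended Euler characteristic. The only cosmetic difference is that you route the fiberability step through co-analyzability and the o-minimal equivalence of Section~\ref{co}, whereas the paper cites \cite{ADH} directly for ``discrete implies fiberable''; the underlying input is the same.
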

\begin{proof} Suppose $X\equiv \omega$. Then $X$ is discrete as a subspace of $K$ with its order topology. Hence $X$ is fiberable
over $C$ by \cite{ADH}. Now $X\equiv \omega$ also yields  a definable bijection between $X$ and $X\setminus \{\text{the least element of $X$}\}$, but the extended Euler characteristic then gives $E(X)=E(X)-1$, a contradiction. 
\end{proof} 

\noindent
Let $X:=\{y\in \T:\ yy''=(y')^2\}$. Then $X$ is $1$-step fiberable over $\R$, witnessed by
 $f: X\to \R$ given by $f(y)=y'/y$ for $y\ne 0$ and $f(0)=0$: for $0\ne z\in \R$ we have $y\in X$ with $f(y)=z$, and then
 $f^{-1}(z)=y\R^\times$, and
 $f^{-1}(0)=\R$. Thus for the extended semialgebraic dimension and Euler characteristic we have 
 $$\dim X\ =\ 2, \qquad E(X)\ =\ 3.$$
 In \cite[pp. 31, 32]{ADH} we showed that $X$ is not internal to $\R$, and so $X$ is in particular not definably isomorphic to any set in $\Def(\R)$.
Any set in $\Def(\R)$ is determined up-to-definable-isomorphism by its dimension and Euler characteristic, by \cite[p. 132]{D}, but the above $X$ shows this is no longer the case for $\Def(\R)^{\f}$.

\subsection*{Tame pairs of real closed fields} In this subsection $R$ is a proper real closed field extension of $\R$. Then
 $Y\subseteq \R^n$ is definable in $\cM=(R,\R)$ iff $Y$ is real semialgebraic, that is, semialgebraic  in the sense of the real 
closed field $\R$. In particular, $\R$ is stably embedded in $\cM$.  The following equivalences 
for definable $X\subseteq R^m$ are from \cite{AD}: 
\begin{align*}  &X \text{ is definably isomorphic to a real semialgebraic set}\\
&\ \Longleftrightarrow\  X  \text{ is fiberable over }\R\  \Longleftrightarrow\ X \text{ is discrete}.
\end{align*} 
``Discrete" is with respect to the order topology on $R$ and the corresponding product topology on $R^n$. Thus here
$\Def(\R)^{\f}=\Def(\R)^{\iso}$, and so  the dimension and Euler characteristic 
on $\Def(\R)$ extend uniquely to Fubini measures on $\Def(\R)^{\f}$, without requiring Theorem~\ref{thm}. 

\medskip\noindent
We recall that the models of $\operatorname{Th}(R,\R)$ are the pairs $(K,\k)$ with $K$ a real closed field and $\k$ a proper real closed subfield such that
$\mathcal{O}=\k+\smallo$ where $\mathcal{O}$ is the convex hull of $\k$ in $K$ and $\smallo$ is the maximal ideal of the valuation ring $\mathcal{O}$.  

\subsection*{Algebraically closed fields}  Let $\k$ be an algebraically closed field. The objects of $\Def(\k)$ are the subsets of the affine spaces $\k^n$ that are constructible with respect to the Zariski topology on these spaces (Chevalley-Tarski). 

\begin{prop}\label{uniq}  If $\operatorname{char}(\k)>0$, then there is no $\Z$-valued Fubini measure on $\Def(\k)$. If 
$\operatorname{char}(\k)=0$, then there is a unique $\Z$-valued Fubini measure on $\Def(\k)$. 
\end{prop}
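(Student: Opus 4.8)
The plan is to treat the two characteristics by completely different mechanisms: in positive characteristic I would exhibit explicit covering maps that force incompatible values on any putative $\mu$, while in characteristic $0$ I would first pin down $\mu(\k)$ and invoke Lemma~\ref{munu} for uniqueness, transferring the complex Euler characteristic for existence.

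For $\operatorname{char}(\k)=p>0$, suppose toward a contradiction that $\mu$ is a $\Z$-valued Fubini measure on $\Def(\k)$. The Artin--Schreier map $\alpha\colon \k\to\k$, $\alpha(x)=x^{p}-x$, is a $\Def(\k)$-morphism: since $\k$ is algebraically closed and $X^{p}-X-y$ is separable of degree $p$, the map $\alpha$ is surjective with every fiber of size exactly $p$. By (F3), $\mu(\k)=p\,\mu(\k)$, so $(p-1)\mu(\k)=0$ in $\Z$ and hence $\mu(\k)=0$. On the other hand, fix $n\ge 2$ with $\gcd(n,p)=1$ (say $n=2$ if $p$ is odd and $n=3$ if $p=2$); then the $n$-th power map $\k^{\times}\to\k^{\times}$ is surjective with every fiber of size $n$, so (F3) gives $\mu(\k^{\times})=n\,\mu(\k^{\times})$ and therefore $\mu(\k^{\times})=0$. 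But $\k$ is the disjoint union of $\{0\}$ and $\k^{\times}$, so by (d) and (e) we get $\mu(\k^{\times})=\mu(\k)-1=-1$, contradicting $\mu(\k^{\times})=0$. Hence no such $\mu$ exists.

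For $\operatorname{char}(\k)=0$ I would handle uniqueness first. The squaring map $\k^{\times}\to\k^{\times}$ is now $2$-to-$1$, so exactly as above $\mu(\k^{\times})=0$ and thus $\mu(\k)=1$ for every $\Z$-valued Fubini measure $\mu$. Since $\k$ is strongly minimal, every definable $Y\subseteq\k$ is finite or cofinite, and its measure is then forced: $\mu(Y)=|Y|$ if $Y$ is finite, and $\mu(Y)=1-|\k\setminus Y|$ if $Y$ is cofinite. Thus any two $\Z$-valued Fubini measures agree on all definable subsets of $\k$, and Lemma~\ref{munu} (applied with $C=\k=M$) gives that they agree on all of $\Def(\k)$, proving uniqueness. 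For existence, over $\C$ the topological Euler characteristic with compact supports $\chi$ assigns an integer to each constructible set, is additive and multiplicative, and satisfies the fibration identity $\chi(X)=\sum_{a}a\,\chi(Y_{a})$ for every constructible map $f\colon X\to Y$, where $Y_{a}:=\{y\in Y:\chi(f^{-1}(y))=a\}$; these standard facts yield (d), (e) and (F1), (F2), (F3), so $\chi$ is a $\Z$-valued Fubini measure on $\Def(\C)$. To pass to an arbitrary algebraically closed field $\k$ of characteristic $0$, I would invoke completeness of $\operatorname{ACF}_{0}$ (the Lefschetz principle): a constructible set is given by a formula with parameters, the value of $\chi$ on a definable family over $\C$ is constant on the pieces of a definable, integer-labelled partition of the parameter space, and this labelled partition transfers verbatim to $\k$. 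Defining the value on $X$ to be the label of the piece containing its parameter, one checks well-definedness and that (d), (e), (F1)--(F3) transfer from $\C$, producing the required measure.

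The main obstacle is the existence half in characteristic $0$: everything hinges on the fibration property (F3) for the complex Euler characteristic with compact supports --- that a constant fiberwise Euler characteristic multiplies --- together with the transfer being genuinely well-defined and axiom-preserving. The impossibility and uniqueness parts, by contrast, are elementary once the Artin--Schreier and power covers are in hand and Lemma~\ref{munu} is available. It is worth noting that the positive-characteristic obstruction is precisely the failure of multiplicativity of $\chi$ for the wildly ramified Artin--Schreier cover, which is what the first paragraph captures by hand.
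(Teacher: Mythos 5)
Your positive-characteristic argument and your characteristic-$0$ uniqueness argument are both correct and essentially coincide with the paper's. The paper also extracts the contradiction from the Artin--Schreier polynomial: it compares the two coordinate projections of the graph $X=\{(x,y)\in\k^2:\ x^p+x=y\}$, getting $\mu(X)=\mu(\k)=1$ and $\mu(X)=p\mu(\k)=p$ simultaneously, where you instead apply (F3) to $x\mapsto x^p-x$ and to a prime-to-$p$ power map and then use additivity; both versions are fine. Likewise, the paper pins $\mu$ down on definable subsets of the affine line (finite or cofinite, using $\mu(\k)=1$ obtained from the squaring or cubing map) and then invokes Lemma~\ref{munu}, exactly as you do.

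The genuine gap is in the existence half in characteristic $0$. Completeness of $\operatorname{ACF}_0$ transfers \emph{sentences}, not sets, and the pieces of your integer-labelled partition of the parameter space are a priori only constructible \emph{with parameters in $\C$}. An arbitrary algebraically closed field $\k$ of characteristic $0$ --- say $\bar{\Q}$, or a field of cardinality greater than $2^{\aleph_0}$ --- neither contains $\C$ nor receives its parameters under any map, so there is nothing to transfer ``verbatim''. To repair this you would have to show the partition can be chosen $0$-definable (defined over $\Q$); since $\C$ is a saturated model, that is equivalent to showing $\chi$ is invariant under all abstract field automorphisms of $\C$, a true but genuinely nontrivial theorem (conjugate varieties can have different topology, as in Serre's examples with non-isomorphic fundamental groups; invariance of $\chi$ is usually proved via comparison with \'etale cohomology, or via generic topological triviality over $\Q$-defined strata). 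Alternatively one could route the transfer through a common elementary extension of $\C$ and $\k$ and use stable embeddedness of models of ACF in their elementary extensions to pull the measure back down to $\Def(\k)$, but your sketch does neither. The paper sidesteps all of this with a purely algebraic device: by the Artin--Schreier theorem, $\k=\k_0[\imag]$ for some real closed subfield $\k_0$, so constructible subsets of $\k^n$ become semialgebraic subsets of $\k_0^{2n}$, and the o-minimal Euler characteristic over the real closed field $\k_0$ (cited from \cite[Ch.~4]{D}) restricts to the desired Fubini measure on $\Def(\k)$ --- no topology, no transfer, and uniform in $\k$.
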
 
\begin{proof} Suppose $\mu$ is a $\Z$-valued Fubini measure on $\Def(\k)$. Then $\mu(\k)=1$: 
 if $\operatorname{char}(\k)\ne 2$,  the fibers of 
 $y\mapsto y^2: \k\to \k$
have size $2$ except at $0$, so 
$$\mu(\k)\  =\ 1+2\big(\mu(\k)-1\big),$$
 and thus $\mu(\k)=1$; if $\operatorname{char}(\k)=2$, the map $y\mapsto y^3: \k\to \k$ gives likewise $\mu(\k)=1+3\big(\mu(\k)-1\big)$, again resulting in $\mu(\k)=1$. 

Now assume $\operatorname{char}(\k)=p>0$, and set $X:=\{(x,y)\in \k^2: x^p+x=y\}$. The map $(x,y)\mapsto x: X\to \k$ is bijective, so
$\mu(X)=\mu(\k)=1$, and the fibers of the map $(x,y)\to y: X\to \k$ have size $p$, so $\mu(X)=p\mu(\k)=p$, a contradiction.

Finally, suppose $\operatorname{char}(\k)=0$. Then $\k$ has a real closed subfield $\k_0$ such that
$\k=\k_0[\imag]$ with $\imag^2=-1$. Identify $\k$ with $\k_0^2$ via $x+y\imag\mapsto (x,y)$ for $x,y\in \k_0$.
Then the constructible sets $Y\subseteq \k^n$ become semialgebraic sets $Y\subseteq \k_0^{2n}$, and as such have an
Euler characteristic $E(Y)\in \Z$. As a function of $Y\in \Def(\k)$, this is a Fubini measure on $\Def(\k)$. If $Y\subseteq \k$ is constructible, then $Y$ is finite, or cofinite in $\k$, so $\mu(Y)=E(Y)$. Now Lemma~\ref{munu} yields for all $n$ and constructible  $Y\subseteq \k^n$ that 
$\mu(Y)=E(Y)$, and thus uniqueness as claimed.  
\end{proof} 

\noindent
Let now $\k$ be a proper subfield of the algebraically closed field $K$, and $\cM:=(K, \k)$. Then
$\cM$ is $\omega$-stable with $\text{MR}(\k)=1$, $\text{MR}(K)=\omega$, and a set $Y\subseteq \k^n$ is definable in $\cM$ iff $Y$ is constructible in the sense of the algebraically closed field $\k$. Thus $\k$ is stably embedded in $\cM$.   
Let $X\subseteq K^m$ be definable. Then
$$X\text{ is almost internal to }\k\
\Leftrightarrow\  X \text{ is co-analyzable relative to }\k\
  \Leftrightarrow\ \operatorname{MR}(X)<\omega.$$
These equivalences are in \cite{AD}, which also has an example of a definable $X\subseteq K$ that is almost internal to $\k$ but not internal to $\k$. Another proof of the first equivalence using \cite{P} was pointed out to me by Pillay.   Lemmas~\ref{fico} and ~\ref{cofi} also give: 
$$X \text{ is fiberable over }\k\ \Longleftrightarrow\  X \text{ is co-analyzable relative to }\k.$$ 
 By Lemma~\ref{smfub} the function $Y\mapsto \text{MR}(Y)$
on $\Def(\k)$ is a Fubini measure with values in $\N_{\trop}$. By Theorem~\ref{thm} it extends uniquely to
an $\N_{\trop}$-valued Fubini measures on $\Def(\k)^{\f}$; this extension is necessarily
$X\mapsto \text{MR}(X)$: the above relation to almost internality implies that it is a Fubini measure on $\Def(\k)^{\f}$. 

\medskip\noindent
Now assume $\operatorname{char}(\k)=0$. Then we have the unique $\Z$-valued Fubini measure $Y\mapsto E(Y)$ on $\Def(\k)$ from the proof of Proposition~\ref{uniq}. By Theorem~\ref{thm} it extends uniquely to a $\Z$-valued Fubini measure on 
$\Def(\k)^{\f}$. 
Although "fiberable over $\k$" collapses here to ``almost internal to $\k$", we still need all of Section~\ref{prelim} and much of Section~\ref{proof}  to obtain this extension result.

\subsection*{The case of differentially closed fields} Let $K$ be a differentially closed field, that is, a model of $\text{DCF}_0$. 
Let $C$ be its constant field. Then $Y\subseteq C^n$ is definable in $K$ iff $Y$ is definable in the algebraically closed field $C$, in particular, $C$ is stably embedded in $K$. Recall also that $K$ is $\omega$-stable, with $\text{MR}(C)=1$ and
$\text{MR}(K)=\omega$. Working in $\cM:=K$ we have for definable $X\subseteq K^m$:
$$ X \text{ is fiberable over }C\ \Longleftrightarrow\ 
X \text{ is co-analyzable relative to }C\ \Longrightarrow\ \operatorname{MR}(X)<\omega. $$
The equivalence here follows from Lemmas~\ref{fico} and  ~\ref{cofi}. The implication is obtained from the more precise fact that if $X$ is co-analyzable in $r$-steps relative to $C$, then $\text{MR}(X)\le r$ (by induction on $r$). 

In the previous subsection we considered the Fubini measures $$Y\mapsto \text{MR}(Y), \qquad Y\mapsto E(Y)$$
on $\Def(\k)$, taking values in $\N_{\trop}$ and in $\Z$, respectively. Taking now $\k=C$, Theorem~\ref{thm} yields unique extensions of these functions to
$\N_{\trop}$-valued and $\Z$-valued Fubini measures on $\Def(C)^{\f}$.

\section{A many-sorted version}\label{ms}  

\noindent
So far we assumed $\cM$ to be one-sorted. But many structures are more naturally viewed as many-sorted; for example, valued fields as $3$-sorted structures with residue field and value group as extra sorts. It is worth noting that for a henselian valued field of equicharacteristic $0$, its
residue field and value group are stably embedded in the ambient 3-sorted valued field.

Let $\cM=\big((M_\sigma)_{\sigma\in \Sigma};\cdots\big)$ be a many-sorted structure, $\Sigma$ its set of sorts.
For every tuple $\vec \sigma=(\sigma_1,\dots, \sigma_m)\in \Sigma^m$ this yields the product set 
$\cM_{\vec \sigma}=M_{\sigma_1}\times \cdots\times M_{\sigma_m}$. Now the category $\Def(\cM)$ has as its objects the
definable sets $X\subseteq \cM_{\vec\sigma}$, with $\vec\sigma$ part of specifying $X$ as such an object;  for $X,Y\in \Def(\cM)$ the morphisms $X\to Y$ are the definable maps $X\to Y$. Let $\cC$ be a full subcategory of $\Def(\cM)$ satisfying
conditions (a), (b), (c) from the introduction, where instead of $M^m$, $M^n$, and $M^{m+n}$ we have $M_{\vec{\sigma}}$, 
$M_{\vec{\tau}}$ and $M_{\vec{\sigma}\vec{\tau}}$ with $\vec{\sigma}\in \Sigma^m$ and $\vec{\tau}\in \Sigma^n$; the concept
``$A$-valued Fubini measure on $\cC$" is defined as before, with $M_{\vec\sigma}$ instead of $M^m$. 

Let there be given a subset
$\Sigma_0$ of $\Sigma$ and for each $\sigma\in \Sigma_0$ a nonempty $0$-definable set $C_{\sigma}\subseteq M_{\sigma}$.
Then we define the full subcategory $\cC$ of $\Def(\cM)$ to have as its objects the definable subsets of the products 
$$C_{\vec{\tau}}\ :=\  C_{\tau_1}\times \cdots \times C_{\tau_n}\ \subseteq\ M_{\vec{\tau}}, \qquad \vec{\tau}\in \Sigma_0^n.$$
(This is our many-sorted analogue of the $0$-definability of $C$ in $\cM$.) Note that this $\cC$ satisfies conditions
(a), (b), (c) as amended above. To avoid trivial cases we assume further that some $C_{\sigma}$ with $\sigma\in \Sigma_0$ has more than one element. 
We then define $\cC$ to be {\em stably embedded in $\cM$} if for every $0$-definable $\cal{S}\subseteq M_{\vec\sigma}\times M_{\vec \tau}$ with $\vec\sigma\in \Sigma^m$, $\tau\in \Sigma_0^n$, there is a $0$-definable $\cal{T}\subseteq M_{\vec\rho}\times M_{\vec\tau}$, with $\vec\rho\in \Sigma_0^\mu$ for some $\mu\in \N$, such that for all $p\in M_{\vec\sigma}$ there exists $c\in C_{\vec \rho}$ with $\cal{S}(p)\cap C_{\vec\tau} = \cal{T}(c)\cap C_{\vec \tau}$. 

Let $\cM$ be $\omega$-saturated.  By recursion on $r\in \N$ we define for $X\in \Def(\cM)$ the concept
``$X$ is $r$-step fiberable over $\cC$":  this means the existence of a definable $f: X \to C_{\vec\tau}$, $\tau\in \Sigma_0^n$, such that 
if $r=0$, then $f$ is injective, and if $r\ge 1$, then $f(X)$ and every fiber $f^{-1}(y)$, $y\in C_{\vec\tau}$,
is $(r-1)$-step fiberable over $\cC$. 

For  $\cM$ that is not necessarily $\omega$-saturated and $X\in \Def(\cM)$ we extend the notion ``$X$ is $r$-step fiberable over $\cC$'' as in the subsection {\em Completing the inductive step}, in such a way that it is equivalent to
``$X^*$ is $r$-step fiberable over $\cC^*$'' where $\cM^*=\big((M_\sigma)_{\sigma\in \Sigma};\cdots\big)$ is any elementary extension of $\cM$.

Let $\cC^{\f}$ be the full subcategory of $\Def(\cM)$ whose objects are the $X\in \Def(\cM)$
that are $r$-step fiberable over $\cC$ for some $r$. Then $\cC^{\f}\supseteq \cC$ and $\cC^{\f}$ satisfies (a), (b), (c) as amended above.  Our many-sorted version of Theorem~\ref{thm} is now as follows, with essentially the  same proof as in the one-sorted case, {\em mutatis mutandis}:

\medskip
{\em If $\cC$ is stably embedded in $\cM$ and $\mu$ is an $A$-valued Fubini measure on $\cC$,
then $\mu$ extends uniquely to an $A$-valued Fubini measure on $\cC^{\f}$}.

\bibliographystyle{amsplain}

\begin{thebibliography}{99}

\bibitem{AD}{\sc Leonardo \'{A}ngel, L. van den Dries}, {\em Bounded pregeometries and pairs of fields}, South Americal Journal of Logic {\bf 2.2} (2016), 459--475.

\bibitem{AMSW}{\sc S. Anscombe, D. Macpherson, C. Steinhorn, D. Wolf}, {\em Multidimensional Asymptotic Classes}, preprint.  

\bibitem{ADH}{\sc M. Aschenbrenner, L. van den Dries, J. van der Hoeven}, {\em Dimension in the realm of transseries}, Cont. Math. {\bf 697} (2017), 23--39.

\bibitem{D}{\sc  L. van den Dries}, {\em Tame Topology and O-minimal Structures}, LMS Lecture Note Series {\bf 248}, Cambridge U. Press, 1998. 


\bibitem{HHM} {\sc B. Herwig, E. Hrushovski, D. Macpherson}, {\em Interpretable groups, stably embedded sets, and Vaughtian pairs}, J. London Math. Soc. {\bf 68} (2003), 1--11. 

\bibitem{HK}{\sc E. Hrushovski, D. Kazhdan}, {\em Integration in valued fields},  pp. 261--405  in ``Algebraic geometry and number theory", vol. 253, Progr. Math.   Birkh\"{a}user, 2006. 

\bibitem{J}{\sc R. Jin} {\em Constructing types in differentially closed fields that are analysable in the constants}, JSL {\bf 83} (2018), 1413--1433. 

\bibitem{MS}{\sc D. Macpherson, C. Steinhorn}, {\em One-dimensional Asymptotic Classes of Finite Structures}, Transactions of the AMS {\bf 360} (2008), 411--448. 



\bibitem{P}{\sc A. Pillay}, {\em Imaginaries in pairs of algebraically closed fields}, Annals of Pure and Applied Logic {\bf 146} (2007), 13--20. 


\end{thebibliography}

 \end{document}